\newcommand{\E}{\mathcal{E}}
\newcommand{\F}{\mathcal F}
\renewcommand{\H}{\mathcal{H}}
\renewcommand{\L}{\mathcal{L}}
\newcommand{\M}{\mathcal{M}}
\newcommand{\Sc}{\mathcal S}
\newcommand{\hd}{\mathrm{hd}\,}
\newcommand{\R}{\mathbb{R}}
\renewcommand{\SS}{\mathbb{S}}
\newcommand{\Om}{\Omega}
\renewcommand{\a}{\alpha}
\newcommand{\g}{\gamma}
\newcommand{\de}{\delta}
\newcommand{\e}{\varepsilon}
\renewcommand{\k}{\kappa}
\renewcommand{\l}{\lambda}
\newcommand{\s}{\sigma}
\newcommand{\om}{\omega}
\newcommand{\vphi}{\varphi}
\newcommand{\Lip}{{\rm Lip}}
\newcommand{\Div}{{\rm div}\,}
\newcommand{\Id}{{\rm Id}\,}
\newcommand{\dist}{{\rm dist}}
\newcommand{\spt}{{\rm spt}}
\newcommand{\weak}{\rightharpoonup}
\newcommand{\weakstar}{\stackrel{*}{\rightharpoonup}}
\newcommand{\pa}{\partial}
\newcommand{\cl}{\mathrm{cl}\,}
\newcommand{\p}{\mathbf{p}}
\newcommand{\C}{\mathcal{C}}
\newcommand{\KK}{\mathcal{K}}
\newcommand{\var}{\mathbf{var}\,}
\newcommand{\eps}{\varepsilon}
\newcommand{\cc}{\subset\joinrel\subset}
\newcommand{\ssubset}{\subset\joinrel\subset}
\newcommand{\Lt}{<<}
\newcommand{\mres}{\llcorner}
\theoremstyle{plain}
\newtheorem{theorem}{Theorem}[section]
\newtheorem{lemma}[theorem]{Lemma}
\newtheorem{corollary}[theorem]{Corollary}
\newtheorem{proposition}[theorem]{Proposition}
\newtheorem*{theorem*}{Theorem}
\newtheorem*{corollary*}{Corollary}
\theoremstyle{definition}
\newtheorem{remark}[theorem]{Remark}
\newtheorem*{notation*}{Notation}
\numberwithin{equation}{section}
\numberwithin{figure}{section}
\newcommand{\id}{{\rm id}\,}
\newcommand{\UM}{\mathcal{UM}\,}
\newcommand{\LM}{\mathcal{LM}\,}
\newcommand{\bY}{\mathbf{Y}}
\newcommand{\bT}{\mathbf{T}}
\newcommand{\Sing}{\mathrm{Sing}}
\newcommand{\bC}{\mathbf{C}}
\newcommand{\cS}{\mathcal{S}}
\title{Smoothness of collapsed regions \\ in a capillarity model for soap films}
\author{Darren King}
\address{Department of Mathematics, The University of Texas at Austin, 2515 Speedway, Stop C1200, Austin TX 78712-1202, United States of America}
\email{king@math.utexas.edu}
\author{Francesco Maggi}
\email{maggi@math.utexas.edu}
\author{Salvatore Stuvard}
\email{stuvard@math.utexas.edu}
\begin{document}

\begin{abstract} {\rm We study generalized minimizers in the soap film capillarity model introduced in \cite{maggiscardicchiostuvard,kms}. Collapsed regions of generalized minimizers are shown to be smooth outside of dimensionally small singular sets, which are thus empty in physical dimensions. Since generalized minimizers converge to Plateau's type surfaces in the vanishing volume limit, the fact that collapsed regions cannot exhibit $Y$-points and $T$-points (which are possibly present in the limit Plateau's surfaces) gives the first strong indication that singularities of the limit Plateau's surfaces should always be ``wetted'' by the bulky regions of the approximating generalized minimizers.}
\end{abstract}

\maketitle

\tableofcontents

\section{Introduction} \label{s:intro} \subsection{Overview} We continue the investigation of a model for soap films based on capillarity theory which was recently introduced by A. Scardicchio and the authors in \cite{maggiscardicchiostuvard,kms}. Soap films are usually modeled as minimal surfaces with a prescribed boundary: this idealization of soap films gives a model without length scales, which cannot capture those behaviors of soap films determined by their three-dimensional features, e.g. by their thickness. Regarding enclosed volume, rather than thickness, as a more basic geometric property of soap films, in \cite{maggiscardicchiostuvard,kms} we have started the study of soap films through capillarity theory, by proposing a {\bf soap film capillarity model} (see \eqref{def:SFCP} below). In this model, one looks for surface tension energy minimizers enclosing a fixed small volume, and satisfying a spanning condition with respect to a given wire frame. In \cite{kms} we have proved the existence of minimizers, and have shown their convergence to minimal surfaces satisfying the same spanning condition as the volume constraint converges to zero ({\bf minimal surfaces limit}). Although minimizers in the soap film capillarity model are described by regions of positive volume, these regions may fail to have uniformly positive thickness: indeed, in order to satisfy the spanning condition, minimizers may locally collapse onto surfaces. Understanding these collapsed surfaces, as well as their behavior in the minimal surfaces limit, is an important step in the study of the soap film capillarity model. In this paper we obtain a decisive progress on this problem, by showing the smoothness of collapsed surfaces, up to possible singular sets of codimension at least $7$. In particular, we show that, in physical dimensions, collapsed regions are smooth, thus providing strong evidence that, in the minimal surfaces limit, any singularities of solutions of the Plateau's problem should be ``wetted'' by the bulky parts of capillarity minimizers.

\subsection{The soap film capillarity model}\label{section recap} We start by recalling the formulation of our model for soap films hanging from a wire frame, together with the main results obtained in \cite{kms,kms2}. The wire frame is a compact set $W\subset\R^{n+1}$, $n\ge1$, and the region of space accessible to soap films is the open set
\[
\Om=\R^{n+1}\setminus W\,.
\]
A {\bf spanning class} in $\Omega$ is a non-empty family $\C$ of smooth embeddings $\gamma \colon \SS^1 \to \Omega$ which is homotopically closed \footnote{If $\gamma_0,\gamma_1$ are smooth embeddings $\SS^1\to \Omega$ with $\gamma_0 \in \C$ and $f \colon \left[0,1\right] \times \SS^1 \to \Omega$ is a continuous mapping such that $f(0,\cdot)=\gamma_0$ and $f(1,\cdot)=\gamma_1$ then also $\gamma_1 \in \C$.} in $\Omega$; correspondingly, a relatively closed subset $S$ of $\Omega$ is {\bf $\C$-spanning $W$} if $S\cap\g\ne\emptyset$ for every $\g\in\C$. Given choices of $W$ and $\C$, we obtain a formulation of {\bf Plateau's problem} (area minimization with a spanning condition) following Harrison and Pugh \cite{harrisonpughACV,harrisonpughGENMETH} (see also \cite{DLGM}), by setting
\begin{equation} \label{def:Plateau}
\ell = \ell(W,\C)= \inf\left\lbrace  \H^n (S) \, \colon \, S \in \cS \right\rbrace\,,
\end{equation}
where $\H^n$ denotes the $n$-dimensional Hausdorff measure on $\R^{n+1}$, and where
\begin{equation} \label{def:Plateau competitors}
\cS = \left\lbrace  S \subset \Om \, \colon \, \mbox{$S$ is relatively closed and $\C$-spanning $W$} \right\rbrace\,.
\end{equation}
Minimizers $S$ of $\ell$ exist as soon as $\ell<\infty$. They are, in the jargon of Geometric Measure Theory, {\bf Almgren minimal sets in $\Om$}, in the sense that they minimize area with respect to local Lipschitz deformations
\begin{equation}
  \label{almgren minimizer}
  \H^n(S\cap B_r(x))\le\H^n(f(S)\cap B_r(x))\,,
\end{equation}
whenever $f$ is a Lipschitz map with $\{f\ne\id\}\cc B_r(x)\cc \Om$ and $f(B_r(x))\subset B_r(x)$ (here $B_r(x)$ is the open ball of center $x$ and radius $r$ in $\R^{n+1}$). This minimality property is crucial in establishing that, in the physical dimensions $n=1,2$, minimizers of $\ell$ satisfy the celebrated {\bf Plateau's laws}, and are thus realistic models for actual soap films; see \cite{Almgren76,taylor76}, section \ref{section regularity of Almgren min}, and
\begin{figure}
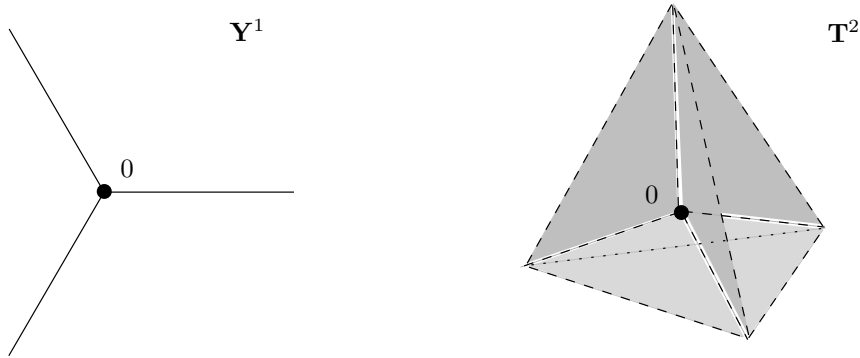
\caption{\small{When $n=1$, Almgren minimal sets are locally {\it isometric} either to lines or to $\bY^1\subset\R^2$, the cone with vertex at the origin spanned by $(1,0)$, $e^{i\,2\pi/3}$ and $e^{i\,4\pi/3}$. When $n=2$, Almgren minimal sets are locally diffeomorphic either to planes (and locally at these points they are smooth minimal surfaces), or to $\bY^1\times\R$, or to $\bT^2$, the cone with vertex at the origin spanned by the edges of a reference regular tetrahedron (for the purposes of this paper, there is no need to specify this reference choice).}}
\label{fig yt}
\end{figure}
Figure \ref{fig yt}.

\medskip

In capillarity theory (neglecting gravity and working for simplicity with a null adhesion coefficient) regions $E$ occupied by a liquid at equilibrium inside a container $\Om$ can be described by minimizing the area $\H^n(\Om\cap\pa E)$ of the boundary of $E$ lying inside the container while keeping the volume $|E|$ of the region fixed. When the fixed amount of volume $\e=|E|$ is small, minimizers in the capillarity problem take the form of small almost-spherical droplets sitting near the points of highest mean curvature of $\pa\Om$, see \cite{baylerosales,fall,maggimihaila}. To observe minimizers with a ``soap film geometry'', we impose the $\C$-spanning condition on $\Om\cap\pa E$, and come to formulate the {\bf soap film capillarity problem} $\psi(\eps) = \psi(\eps,W,\C)$, by setting
\begin{equation} \label{def:SFCP}
\psi(\e)= \inf\left\lbrace \H^n(\Om \cap \pa E)\, \colon \, \mbox{$E \in \E$, $|E| = \eps$, and $\Om \cap \pa E$ is $\C$-spanning $W$}\right\rbrace\,,
\end{equation}
where
\begin{equation} \label{def:SFCP competitors}
\E = \left\lbrace E \subset \Om \,\colon\, \mbox{$E$ is an open set and $\pa E$ is $\H^n$-rectifiable} \right\rbrace\,.
\end{equation}
Of course, a minimizing sequence $\{E_j\}_j$ for $\psi(\e)$ may find energetically convenient to locally ``collapse'' onto lower dimensional regions, see
\begin{figure}
  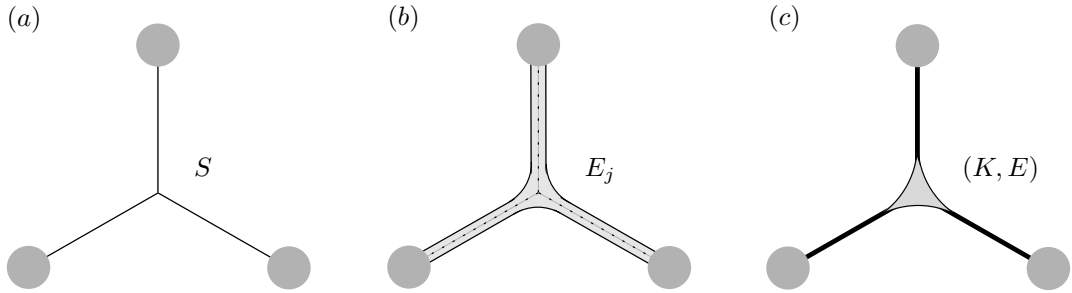
  \caption{{\small The soap film capillarity problem in the case when $W$ consists of three small disks centered at the vertexes of an equilateral triangle, and $\C$ is generated by three loops, one around each disk in $W$: (a) the unique minimizer $S$ of $\ell$ consists of three segments meeting at 120-degrees at a $Y$-point; (b) a minimizing sequence $\{E_j\}_j$ for $\psi(\e)$ will partly collapse along the segments forming $S$; (c) the resulting generalized minimizer $(K,E)$, where $K\setminus\pa E$ consists of three segments (whose area is weighted by $\F$ with multiplicity $2$, and which are depicted by bold lines), and where $E$ is a negatively curved curvilinear triangle enclosing a volume $\e$, and ``wetting'' the $Y$-point of $S$.}}
  \label{fig collapsing}
\end{figure}
Figure \ref{fig collapsing}. Hence, we do not expect to find minimizers of $\psi(\e)$ in $\E$, but rather to describe limits of minimizing sequences in the class
\begin{equation} \label{def:gen min class}
\begin{split}
\KK = \Big\{ (K,E) \, \colon \, & \mbox{$E \subset \Om$ is open with $\Om \cap \cl(\pa^*E) = \Om \cap \pa E \subset K$,} \\
& \mbox{$K \in \cS$ and $K$ is $\H^n$-rectifiable} \Big\}\,,
\end{split}
\end{equation}
(where $\pa^*E$ is the reduced boundary of $E$, and $\cl$ stands for topological closure in $\R^{n+1}$), and to compute the limit of their energies with the relaxed energy functional $\F$ defined on $\KK$ as
\begin{equation} \label{def:relaxed energy}
\F(K,E) = \H^n(\Om \cap \pa^*E) + 2\,\H^n(K \setminus \pa^*E) \qquad \mbox{for $(K,E) \in \KK$}\,.
\end{equation}
Notice the factor $2$ appearing as a weight for the area of $K\setminus\pa^*E$, due to the fact that $K\setminus\pa^*E$ originates as the limit of collapsing boundaries of $\Om\cap\pa E_j$. We can now recall the two main results proved in \cite{kms,kms2}, which state the existence of (generalized) minimizers of $\psi(\e)$ and prove the convergence of $\psi(\e)$ to the Plateau's problem $\ell$ when $\e\to 0^+$.

\begin{theorem}[Existence of generalized minimizers {\cite[Theorem 1.4]{kms}} and {\cite[Theorem 5.1]{kms2}}]\label{kms:existence}
Assume that $\ell=\ell(W,\C)<\infty$, $\Om$ has smooth boundary, and that
\begin{equation}
  \label{unica hp}
  \mbox{$\exists\,\tau_0>0$ such that $\R^{n+1} \setminus I_\tau(W)$ is connected for all $\tau < \tau_0$,}
\end{equation}
where $I_\tau(W)$ is the closed $\tau$-neighborhood of $W$.

\medskip

If $\e>0$ and $\{E_j\}_j$ is a minimizing sequence for $\psi(\e)$, then there exists $(K,E)\in\KK$ with $|E|=\e$ such that, up to possibly extracting subsequences, and up to possibly modifying each $E_j$ outside a large ball containing $W$ (with both operations resulting in defining a new minimizing sequence for $\psi(\e)$, still denoted by $\{E_j\}_j$), we have that,
  \begin{equation}\label{minimizing seq conv to gen minimiz}
    \begin{split}
    &\mbox{$E_j\to E$ in $L^1(\Om)$}\,,
    \\
    &\H^n\mres(\Om\cap\pa E_j)\weakstar \theta\,\H^n\mres K\qquad\mbox{as Radon measures in $\Om$}
    \end{split}
  \end{equation}
  as $j\to\infty$, where $\theta:K\to\R$ is upper semicontinuous and satisfies
  \begin{equation}
    \label{theta density}
    \mbox{$\theta= 2$ $\H^n$-a.e. on $K\setminus\pa^*E$},\qquad\mbox{$\theta=1$ on $\Om\cap\pa^*E$}\,.
  \end{equation}
  Moreover, $\psi(\e)=\F(K,E)$ and, for a suitable constant $C$, $\psi(\e)\le 2\,\ell+C\,\e^{n/(n+1)}$.
\end{theorem}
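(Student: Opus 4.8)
The statement is an existence-and-structure theorem, so the plan is to run the direct method of the calculus of variations, the two genuinely nontrivial ingredients being (i) the preservation of the $\C$-spanning condition in the limit and (ii) the identification of the limit measure as $\theta\,\H^n\mres K$ with the density dichotomy \eqref{theta density}.

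\emph{Step 1 (confinement and compactness).} Given a minimizing sequence $\{E_j\}_j$ we may assume $\H^n(\Om\cap\pa E_j)\le\psi(\e)+1$. The first task is a confinement lemma: there is $R$ with $W\subset B_R$ such that each $E_j$ can be replaced, without increasing the energy and keeping $|E_j|=\e$, by a competitor agreeing with $E_j$ inside $B_R$ and equal outside to a round droplet of small volume attached near $W$ — morally, any mass a minimizing sequence lets drift towards infinity can be recovered near $W$ through a thin tube, whose existence and negligible area cost follow from \eqref{unica hp} and from the fact that a droplet of volume $v$ has boundary area $O(v^{n/(n+1)})$; the same construction also rules out concentration of boundary area on $W$. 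After this reduction $\{1_{E_j}\}_j$ is bounded in $BV(B_{R'})$ for a slightly larger ball, so up to a subsequence $E_j\to E$ in $L^1(\Om)$ with $|E|=\e$ and $E$ of finite perimeter in $\Om$, while $\mu_j:=\H^n\mres(\Om\cap\pa E_j)\weakstar\mu$ as Radon measures in $\Om$ with $\mu(\Om)=\lim_j\mu_j(\Om)=\psi(\e)$. One sets $K:=\Om\cap\spt\mu$; lower semicontinuity of perimeter gives $\mu\ge\H^n\mres(\Om\cap\pa^*E)$, so $\Om\cap\cl(\pa^*E)\subset K$, and $(K,E)$ will lie in $\KK$ once $K$ is shown to be $\H^n$-rectifiable.

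\emph{Step 2 (the spanning condition passes to the limit).} This is the crux. Fix $\g\in\C$; we must show $K\cap\g\ne\emptyset$. Since $\g$ is compact in the open set $\Om$ and $K$ is relatively closed in $\Om$, were $K$ to miss $\g$ there would be a tubular neighborhood $U\supset\g$ with $\ov U\subset\Om$ and $\ov U\cap K=\emptyset$; then $\mu(\ov U)=0$, whence $\H^n(\Om\cap\pa E_j\cap U)\to0$. On the other hand each $\Om\cap\pa E_j$ meets $\g$, and a slicing argument on the distance to $\g$ — in the spirit of Harrison–Pugh and \cite{DLGM}, after recasting the spanning condition in the stable, homological form of loc.\ cit.\ — forces every set $\C$-spanning $W$ to carry at least a fixed amount of $\H^n$-mass inside $U$, a contradiction. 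The reason this works is precisely that the \emph{collapsed} part of $K$ is included in $K$: wherever $\pa E_j$ is collapsing it is accumulating onto $\spt\mu=K$, so the spanning behaviour of $\pa E_j$ near $\g$ is inherited by the closed set $K$.

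\emph{Step 3 (density of $\mu$, energy identity, quantitative bound).} Once $(K,E)$ is known to be $\C$-spanning it is admissible in a recovery construction: thicken $K\setminus\pa^*E$ into a thin two-sided collar, glue it to $\pa^*E$, and restore the volume to $\e$ by a small attached droplet routed near $W$ via \eqref{unica hp}; letting the collar width go to $0$ yields $E_k'\in\E$ with $|E_k'|=\e$, $\Om\cap\pa E_k'$ $\C$-spanning, and $\H^n(\Om\cap\pa E_k')\to\H^n(\Om\cap\pa^*E)+2\,\H^n(K\setminus\pa^*E)=\F(K,E)$, so $\psi(\e)\le\F(K,E)$. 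For the matching lower bound one shows $\mu\ge\H^n\mres(\Om\cap\pa^*E)+2\,\H^n\mres(K\setminus\pa^*E)$: on $\pa^*E$ this is perimeter lower semicontinuity, while at a point $x\in K\setminus\cl(\pa^*E)$ the set $E$ has density $0$ or $1$ yet $\pa E_j$ still approaches $x$ from both local sides, so a blow-up together with the fact that $\Om\cap\pa E_j$ locally separates forces the upper density $\theta(x):=\Theta^{*n}(\mu,x)\ge2$; integrating gives $\mu(\Om)\ge\F(K,E)$. Chaining $\psi(\e)=\mu(\Om)=\lim_j\mu_j(\Om)=\F(K,E)$ turns every inequality into an equality, forces $\theta\in\{1,2\}$ $\H^n$-a.e.\ with \eqref{theta density}, identifies $\mu=\theta\,\H^n\mres K$, and (using also a uniform lower density bound for $\mu$ on $\spt\mu$) shows $K$ is $\H^n$-rectifiable; upper semicontinuity of $\theta$ follows from weak-$*$ convergence plus an almost-monotonicity of the densities. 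Finally $\psi(\e)\le2\,\ell+C\,\e^{n/(n+1)}$ comes from a near-minimizer $S$ of $\ell$: its $t$-neighborhood has boundary area converging to $2\,\H^n(S)$ and encloses volume $O(t)\to0$, so enlarging the volume to $\e$ with an attached droplet of area $O(\e^{n/(n+1)})$ and choosing $t$ small gives the estimate.

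\emph{Main obstacle.} The decisive point is Step 2. Because $\{E_j\}_j$ is only a minimizing sequence, with no Euler–Lagrange equation or regularity available, the naive weak limit of the boundaries could a priori fail to span, and controlling this requires the stable reformulation of the Harrison–Pugh spanning condition together with the quantitative statement that spanning costs $\H^n$-mass locally. Tightly coupled to it is the density-$2$ part of Step 3, which is not an a priori feature of an arbitrary limit but is forced only after sandwiching $\F(K,E)$ between the lower-semicontinuity bound and the recovery-sequence upper bound; getting that sandwich to close — in particular showing $\mu$ charges the collapsed set exactly with multiplicity $2$ and nothing is lost to an unrectifiable diffuse part — is where most of the work lies.
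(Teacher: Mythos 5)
First, a structural point: this theorem is not proved in the present paper at all --- it is recalled verbatim from \cite[Theorem 1.4]{kms} and \cite[Theorem 5.1]{kms2} --- so there is no in-paper argument to compare yours against. Judged on its own terms, your outline reproduces the expected direct-method skeleton, and your Step 2 is correct and is essentially the Harrison--Pugh/\cite{DLGM} mechanism: the parallel copies of $\g$ foliating a tubular neighborhood $U$ with $\cl(U)\subset\Om$ all belong to $\C$ (homotopic closure), so any relatively closed $\C$-spanning set meets every slice, projects onto the full cross-section of $U$, and hence carries $\H^n$-mass at least $c\,\de^n$ in $U$; this is incompatible with $\mu(\cl(U))=0$.

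The genuine gap is the multiplicity-two lower bound in Step 3, which is the crux of the whole theorem. Your justification --- ``$E$ has density $0$ or $1$ yet $\pa E_j$ still approaches $x$ from both local sides, so a blow-up \dots forces $\theta(x)\ge 2$'' --- is not a proof, and the soft statement it relies on is false for general sequences of boundaries: take $E_j=B_1\setminus H$ for a fixed hyperplane $H$. Then $\H^n\mres\pa E_j$ converges to $\H^n\mres\pa B_1+\H^n\mres(H\cap B_1)$, the disk $H\cap B_1$ lies in $K\setminus\pa^*E$ (indeed in the interior of $E=B_1$), and it carries multiplicity $1$, not $2$. Topological boundaries can perfectly well collapse with unit multiplicity; what rules this out for a minimizing sequence is minimality itself, implemented through localized comparison/replacement competitors (e.g.\ filling in the vanishing-volume part of $E_j$ in a small ball and comparing energies), not a blow-up or separation argument. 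Relatedly, your closing paragraph has the logic backwards: the inequality $\mu(\Om)\ge\H^n(\Om\cap\pa^*E)+2\,\H^n(K\setminus\pa^*E)$ is one of the two jaws of the sandwich $\psi(\e)\le\F(K,E)\le\mu(\Om)=\psi(\e)$, so it must be established \emph{before} the sandwich closes and cannot be ``forced'' by it. The same comparison machinery is also what produces the uniform lower density bounds you invoke for the rectifiability of $K$ and for the upper semicontinuity of $\theta$, so this missing ingredient is load-bearing in several places at once.
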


\begin{remark}
  {\rm Based on Theorem \ref{kms:existence}, we say that $(K,E)\in\KK$ is a {\bf generalized minimizer of $\psi(\e)$} if $|E|=\e$, $\F(K,E)=\psi(\e)$ and there exists a minimizing sequence $\{E_j\}_j$ of $\psi(\e)$ such that \eqref{minimizing seq conv to gen minimiz} and \eqref{theta density} hold.  }
\end{remark}

\begin{theorem}[Minimal surfaces limit, {\cite[Theorem 1.9]{kms}} and {\cite[Theorem 5.1]{kms2}}]\label{thm msl} Assume that $\ell=\ell(W,\C)<\infty$, $\Om$ has smooth boundary, and that \eqref{unica hp} holds. Then $\psi$ is lower semicontinuous on $(0,\infty)$ and $\psi(\e)\to 2\,\ell$ as $\e\to 0^+$. Moreover, if $\{(K_h,E_h)\}_h$ are generalized minimizers  of $\psi(\e_h)$ corresponding to $\e_h\to 0^+$ as $h\to\infty$, then there exists a minimizer $S$ of $\ell$ such that, up to extracting a subsequence in $h$, and as $h\to\infty$,
\[
2\,\H^n\mres(K_h\setminus \pa^*E_h)+\H^n\mres(\Om\cap\pa^*E_h)\weakstar 2\,\H^n\mres S\,,\qquad\mbox{as Radon measures in $\Om$}\,.
\]
\end{theorem}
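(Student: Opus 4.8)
The plan is to derive all three assertions --- $\psi(\e)\to2\,\ell$ as $\e\to0^+$, the lower semicontinuity of $\psi$ on $(0,\infty)$, and the compactness of generalized minimizers with the stated limit --- from one compactness principle for sequences of generalized minimizers, together with the known existence theory for the Harrison--Pugh Plateau problem $\ell$. The inequality $\limsup_{\e\to0^+}\psi(\e)\le2\,\ell$ is already contained in Theorem \ref{kms:existence}, through the bound $\psi(\e)\le2\,\ell+C\,\e^{n/(n+1)}$ obtained by thickening a minimizer of $\ell$ into a competitor of volume $\e$. For the remaining content I would start from generalized minimizers $(K_h,E_h)$ of $\psi(\e_h)$ and work with the measures $\mu_h:=2\,\H^n\mres(K_h\setminus\pa^*E_h)+\H^n\mres(\Om\cap\pa^*E_h)=2\,\H^n\mres K_h-\H^n\mres(\Om\cap\pa^*E_h)$, whose mass $\mu_h(\Om)=\F(K_h,E_h)=\psi(\e_h)$ is uniformly bounded. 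Since \eqref{unica hp} confines all the relevant sets to a fixed large ball, up to subsequences $E_h\to E$ in $L^1(\Om)$ with $|E|=\lim_h\e_h$, and $\mu_h\weakstar\mu$ as Radon measures with $\mu(\Om)\le\liminf_h\psi(\e_h)$.

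The heart of the matter, in the regime $\e_h\to0^+$, is the \emph{collapse of the bulk}: $\H^n(\Om\cap\pa^*E_h)\to0$. Granting this, since $K_h\in\cS$ forces $\H^n(K_h)\ge\ell$, the identity $\H^n(\Om\cap\pa^*E_h)=2\,\H^n(K_h)-\psi(\e_h)$ gives at once $\H^n(K_h)\to\ell$ and $\psi(\e_h)=2\,\H^n(K_h)-\H^n(\Om\cap\pa^*E_h)\to2\,\ell$. To prove the collapse I would argue by contradiction: if $\H^n(\Om\cap\pa^*E_h)\ge\delta>0$ along a subsequence, then $\mu$ retains a Borel set of positive $\H^n$-measure on which its density equals $1$ rather than $2$, coming from sheets of $\pa^*E_h$ that persist in the limit. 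As $|E_h|=\e_h\to0$, such sheets enclose a vanishing volume and are energetically wasteful; using the interior and boundary regularity together with the monotonicity and density estimates available for generalized minimizers of the capillarity functional, one should be able to perform a local surgery that ``deflates'' $E_h$ across one of these sheets --- merging two nearby boundary sheets into a single one carried with multiplicity $2$ --- losing only a volume $O(\e_h)$, and then reinsert that volume as a small spherical droplet (near a point of highest mean curvature of $\pa\Om$, or along $K_h$) at an added area cost $O(\e_h^{n/(n+1)})=o(1)$. Since the deflation saves a definite amount of area, this would contradict $\F(K_h,E_h)=\psi(\e_h)$ for $h$ large.

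With the collapse established, $\H^n\mres K_h\weakstar\mu/2$ and $\H^n(K_h)=\tfrac12\bigl(\psi(\e_h)+\H^n(\Om\cap\pa^*E_h)\bigr)\to\ell$, so $\{K_h\}$ is a minimizing sequence for $\ell$; by the compactness theorem underlying the existence of Harrison--Pugh minimizers (\cite{harrisonpughACV,DLGM}, as used in \cite{kms}) there is $S\in\cS$ with $\H^n(S)=\ell$ --- hence $S$ a minimizer of $\ell$ --- and $\H^n\mres K_h\weakstar\H^n\mres S$, so that $\mu=2\,\H^n\mres S$ and $\psi(\e_h)=\mu_h(\Om)\to\mu(\Om)=2\,\ell$; as $\e_h\to0^+$ is arbitrary, $\psi(\e)\to2\,\ell$. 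For the lower semicontinuity of $\psi$ one reruns the compactness with $\e_h\to\e>0$: the $L^1$-limit $E$ has $|E|=\e$, the weak-$*$ limit is of the form $\theta\,\H^n\mres K$ with $\theta$ upper semicontinuous, $\theta\in\{1,2\}$ and $\theta=1$ on $\Om\cap\pa^*E$ (as in Theorem \ref{kms:existence}, since generalized minimizers are themselves limits of minimizing sequences), one checks $(K,E)\in\KK$ with $K$ $\C$-spanning, and lower semicontinuity of perimeter together with the weak-$*$ convergence yield $\F(K,E)\le\liminf_h\F(K_h,E_h)$; since $\F$ realizes the relaxation of the capillarity energy over $\KK$ (cf.\ \cite{kms}), this gives $\psi(\e)\le\F(K,E)\le\liminf_h\psi(\e_h)$.

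The main obstacle is the collapse of the bulk: making the ``deflation plus droplet reinsertion'' surgery rigorous and verifying that it strictly lowers $\F$ requires the full interior/boundary regularity and the density estimates for generalized minimizers of the capillarity functional, plus careful bookkeeping of the $\e^{n/(n+1)}$ cost of volume adjustments. Everything else is either soft (weak-$*$ compactness of measures, lower semicontinuity of perimeter, the confinement coming from \eqref{unica hp}) or quoted from the existing theory of $\ell$.
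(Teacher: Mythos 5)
First, a bookkeeping point: this theorem is not proved in the paper you were given --- it is quoted verbatim from \cite[Theorem 1.9]{kms} and \cite[Theorem 5.1]{kms2} --- so the comparison below is with the structure of that external proof, which is visible from the way Theorem \ref{kms:existence} is stated (the dichotomy $\theta=1$ on $\Om\cap\pa^*E$ versus $\theta=2$ on $K\setminus\pa^*E$ for limits of minimizing sequences).

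Your argument has a genuine gap at its central step, the ``collapse of the bulk'' $\H^n(\Om\cap\pa^*E_h)\to 0$. This claim is not only unproven by your sketch --- it is false in general, and the theorem neither needs it nor implies it. The premise of your contradiction argument is that if $\H^n(\Om\cap\pa^*E_h)\ge\delta$ then the limit measure $\mu$ retains a set of positive $\H^n$-measure with density $1$; but the two sheets of $\pa^*E_h$ can converge onto \emph{each other}, producing density $2$ in the limit while each carries multiplicity $1$ at every finite $h$. Concretely, for a planar circular wire frame spanning a disc $D$, the thin lens-shaped drop of volume $\e$ (two nearly flat caps over $D$) has energy $2\H^n(D)+{\rm O}(\e^2)$, beating any configuration that collapses $D$ to multiplicity $2$ and stores the volume in a droplet at cost $\sim\e^{n/(n+1)}$; for such minimizers $\H^n(\Om\cap\pa^*E_h)\to 2\ell\neq 0$, yet $\mu_h\weakstar 2\,\H^n\mres D$ exactly as the theorem asserts. (Note also that the identity $\H^n(\Om\cap\pa^*E_h)=2\H^n(K_h)-\psi(\e_h)$ combined with $\H^n(K_h)\ge\ell$ only gives a \emph{lower} bound on $\H^n(\Om\cap\pa^*E_h)$ tending to $0$, never an upper bound.) The correct route, as in \cite{kms}, is to work with the limit measure directly: by a diagonal argument each $\mu_h$ is approximated by $\H^n\mres(\Om\cap\pa E^h_{j})$ for admissible open sets of volume $\e_h\to 0$, whose $L^1$-limit is empty; the density/deformation machinery from the existence proof then shows that $\mu=\lim_h\mu_h$ satisfies $\mu\ge 2\,\H^n\mres S'$ for a relatively closed $\C$-spanning $S'$ --- the factor $2$ appearing precisely because no reduced boundary of a nontrivial limit set survives to carry multiplicity $1$. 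This yields $\liminf_h\psi(\e_h)\ge\mu(\Om)\ge 2\ell$ without ever controlling $\H^n(\Om\cap\pa^*E_h)$ itself, and equality forces $\mu=2\,\H^n\mres S'$ with $S'$ a minimizer. Your remaining steps (the $\limsup$ bound from Theorem \ref{kms:existence}, and the lower semicontinuity of $\psi$ via compactness plus the recovery-sequence property of $\F$) are consistent with \cite{kms}, though the latter silently invokes the nontrivial one-sided neighborhood construction.
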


Theorem \ref{kms:existence} and Theorem \ref{thm msl} open of course several questions on the properties of generalized minimizers at fixed $\e$, and on their behavior in the minimal surfaces limit $\e\to 0^+$. The two themes are very much intertwined, and in this paper we focus on the former, having in mind future developments on the latter. Before presenting our new results, we recall from \cite{kms} one of the most basic properties of generalized minimizers of $\psi(\e)$, namely, that they actually minimize the relaxed energy $\F$ among their (volume-preserving) diffeomorphic deformations. In particular, they satisfy a certain Euler-Lagrange equation which, by Allard's regularity theorem \cite{Allard}, implies a basic degree of regularity of $K$.

\begin{theorem}[{\cite[Theorem 1.6]{kms}} and {\cite[Theorem 5.1]{kms2}}]\label{thm basic regularity}
 Assume that $\ell=\ell(W,\C)<\infty$, $\Om$ has smooth boundary, and that \eqref{unica hp} holds. If $(K,E)$ is a generalized minimizer of $\psi(\e)$ and $f:\Om\to\Om$ is a diffeomorphism with $|f(E)|=|E|$, then
 \begin{equation}
   \label{minimality KE against diffeos}
   \F(K,E)\le\F(f(K),f(E))\,.
 \end{equation}
 In particular:

 \medskip

 \noindent (i) there exists $\l\in\R$ such that, for every $X\in C^1_c(\R^{n+1};\R^{n+1})$ with $X\cdot\nu_\Om=0$ on $\pa\Om$,
  \begin{equation}
    \label{stationary main}
    \l\,\int_{\pa^*E}X\cdot\nu_E\,d\H^n=\int_{\pa^*E}\Div^K\,X\,d\H^n+2\,\int_{K\setminus\pa^*E}\Div^K\,X\,d\H^n
  \end{equation}
  where $\Div^K$ denotes the tangential divergence operator along $K$;

  \medskip

  \noindent (ii) there exists $\Sigma\subset K$, closed and with empty interior in $K$, such that $K\setminus\Sigma$ is a smooth hypersurface, $K\setminus(\Sigma\cup\pa E)$ is a smooth embedded minimal hypersurface, $\H^n(\Sigma\setminus\pa E)=0$, $\Om\cap(\pa E\setminus\pa^*E)\subset \Sigma$ has empty interior in $K$, and $\Om\cap\pa^*E$ is a smooth embedded hypersurface with constant scalar mean curvature $\l$ (defined with respect to the outer unit normal $\nu_E$ of $E$).
\end{theorem}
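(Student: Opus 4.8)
The plan is to first establish the minimality inequality \eqref{minimality KE against diffeos}, which carries all the content, and then to deduce \eqref{stationary main} from it by a first variation computation and (ii) from Allard's theorem and standard regularity theory. Throughout, $\{E_j\}_j$ denotes a minimizing sequence for $\psi(\e)$ associated with $(K,E)$ as in \eqref{minimizing seq conv to gen minimiz}, and $\theta$ is as in \eqref{theta density}. To prove \eqref{minimality KE against diffeos} I would reduce to the case in which $f$ is a finite composition of time-one flow maps of $C^1_c(\R^{n+1};\R^{n+1})$ vector fields tangent to $\pa\Om$ (the only $f$ entering (i)--(ii)): such an $f$ restricts to a diffeomorphism of $\Om$ isotopic to $\id$ and equal to $\id$ off a compact set, so $f^{-1}\circ\g$ is homotopic to $\g$ in $\Om$ for each $\g\in\C$ and hence, $\C$ being homotopically closed, $f$ carries $\C$-spanning sets to $\C$-spanning sets. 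Since $|f(E_j)|\to|f(E)|=\e$, I would compose with diffeomorphisms $g_j\to\id$ in $C^1_\loc$ so that $\Phi_j:=g_j\circ f$ has $|\Phi_j(E_j)|=\e$; then each $\Phi_j(E_j)$ is admissible in \eqref{def:SFCP}, so $\psi(\e)\le\H^n(\Om\cap\pa(\Phi_j(E_j)))=\int_{\Om\cap\pa E_j}J^{\pa E_j}\Phi_j\,d\H^n$, where $J^{\pa E_j}\Phi_j$ is the tangential Jacobian of $\Phi_j$ along $\pa E_j$. Letting $j\to\infty$ --- using that \eqref{minimizing seq conv to gen minimiz} refines to convergence of $\H^n\mres(\Om\cap\pa E_j)$ to $\theta\,\H^n\mres K$ \emph{as varifolds} (available from \cite{kms,kms2}), together with $\Phi_j\to f$ in $C^1_\loc$ and $\sup_j\H^n(\Om\cap\pa E_j)<\infty$ --- gives
\[
\psi(\e)\ \le\ \int_K\theta\,J^Kf\,d\H^n\ =\ \H^n\big(\Om\cap\pa^*(f(E))\big)+2\,\H^n\big(f(K)\setminus\pa^*(f(E))\big)\ =\ \F(f(K),f(E))\,,
\]
by the area formula, $\pa^*(f(E))=f(\pa^*E)$, and \eqref{theta density}. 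Since $\psi(\e)=\F(K,E)$ by Theorem \ref{kms:existence}, \eqref{minimality KE against diffeos} follows.

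For (i), fix $X\in C^1_c(\R^{n+1};\R^{n+1})$ tangent to $\pa\Om$, with flow $\{f_t\}$, and pick $Y$ tangent to $\pa\Om$ with $\int_{\pa^*E}Y\cdot\nu_E\,d\H^n\ne0$ (possible since $\H^n(\Om\cap\pa^*E)>0$), with flow $\{g_s\}$. By the implicit function theorem there is a $C^1$ map $s(\cdot)$ with $s(0)=0$ and $|g_{s(t)}(f_t(E))|=\e$ for $|t|$ small, and \eqref{minimality KE against diffeos} applied to $f=g_{s(t)}\circ f_t$ shows $t\mapsto\F(g_{s(t)}(f_t(K)),g_{s(t)}(f_t(E)))$ is minimized at $t=0$. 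Differentiating, and using $\tfrac{d}{dt}\big|_{0}g_{s(t)}\circ f_t=X+s'(0)\,Y$ with $s'(0)=-\int_{\pa^*E}X\cdot\nu_E\,d\H^n\,/\,\int_{\pa^*E}Y\cdot\nu_E\,d\H^n$ (from the first variation of volume), the first variation of area gives
\[
0\ =\ \int_{\pa^*E}\Div^{K}\!\big(X+s'(0)\,Y\big)\,d\H^n+2\int_{K\setminus\pa^*E}\Div^{K}\!\big(X+s'(0)\,Y\big)\,d\H^n\,,
\]
which rearranges to \eqref{stationary main} with $\l:=\big(\int_{\pa^*E}\Div^{K}Y\,d\H^n+2\int_{K\setminus\pa^*E}\Div^{K}Y\,d\H^n\big)/\int_{\pa^*E}Y\cdot\nu_E\,d\H^n$; testing against a second admissible field shows $\l$ does not depend on $Y$ and that \eqref{stationary main} holds for \emph{every} admissible $X$, including those of zero $\nu_E$-flux.

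For (ii), let $V$ be the integral $n$-varifold with weight $\|V\|=\H^n\mres(\Om\cap\pa^*E)+2\,\H^n\mres(K\setminus\pa^*E)$; by \eqref{stationary main}, $V$ has bounded generalized mean curvature in $\Om$, equal to $\l\,\nu_E$ $\H^n$-a.e.\ on $\pa^*E$ and to $0$ on $K\setminus\pa^*E$. On the open set $A:=\Om\setminus\pa E=\Om\setminus\cl(\pa^*E)$ one has $V\mres A=2\,\H^n\mres(K\cap A)$ with $\theta=2$ $\H^n$-a.e., so $\tfrac12\,V\mres A$ is a multiplicity-one stationary integral varifold, and Allard's theorem \cite{Allard} together with elliptic bootstrapping makes $K\cap A$ a smooth embedded minimal hypersurface away from a relatively closed $\H^n$-null set. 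Near $\Om\cap\pa^*E$, where $\theta=1$ and $\H^n\mres(\Om\cap\pa E_j)$ converges to $\H^n\mres(\Om\cap\pa^*E)$ with unit multiplicity, I would run a surgery on $\{E_j\}_j$ --- replacing $E_j$ inside a small ball by a competitor matched to an arbitrarily prescribed perimeter competitor for $E$, restoring the volume by a far-away modification, and keeping the $\C$-spanning condition --- to conclude that $E$ is a $(\Lambda,r_0)$-minimizer of perimeter there; classical regularity for almost-minimizers of perimeter (De Giorgi, Tamanini), combined with \eqref{stationary main} and Schauder theory, makes $\Om\cap\pa^*E$ a $C^\infty$ embedded hypersurface of constant mean curvature $\l$. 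Setting $\Sigma:=K\setminus\{x:K\text{ is a }C^{1,\a}\text{ embedded hypersurface near }x\}$, the listed properties follow: $K\setminus\Sigma$ is smooth, $K\setminus(\Sigma\cup\pa E)$ is smooth minimal, $\H^n(\Sigma\setminus\pa E)=0$, and $\Om\cap\pa^*E\subset K\setminus\Sigma$; moreover $\Om\cap(\pa E\setminus\pa^*E)\subset\Sigma$, because the identity $\Om\cap\pa E=\Om\cap\cl(\pa^*E)$ built into $\KK$ forces $E$ to occupy exactly one side of $K$ near any of its regular points (so such a point lying in $\pa E$ lies in $\pa^*E$); and $\Sigma$ has empty interior in $K$, since $K=\spt(\H^n\mres K)$ would make any nonempty relatively open $U\subset\Sigma$ have $\H^n(U\cap\pa E)>0$, hence meet $\pa^*E\subset K\setminus\Sigma$, a contradiction.

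I expect the main obstacle to be the limit passage in the proof of \eqref{minimality KE against diffeos}: it is crucial that $\H^n\mres(\Om\cap\pa E_j)$ converges to $\theta\,\H^n\mres K$ \emph{as varifolds} and not merely as measures, so that $J^{\pa E_j}\Phi_j$ integrates in the limit against $\theta$ and thereby reproduces the weight $2$ carried by the collapsed part of $\F$; reconciling this weight with the identity $\psi(\e)=\F(K,E)$ --- and, relatedly, the surgery argument yielding the $(\Lambda,r_0)$-minimality of $E$ near $\Om\cap\pa^*E$ while preserving the spanning condition --- is the delicate point, whereas (i) and (ii) are then fairly routine applications of the first variation formula and of the regularity theories of Allard and De Giorgi.
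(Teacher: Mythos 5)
This statement is not proved in the paper at all: it is quoted verbatim from \cite[Theorem 1.6]{kms} and \cite[Theorem 5.1]{kms2}, so there is no in-paper argument to compare yours against. Judged on its own terms, your derivation of (i) from \eqref{minimality KE against diffeos} (flow of $X$ corrected by a volume-fixing flow of $Y$, Lagrange multiplier independent of $Y$) is the standard and correct argument, and your use of Allard's theorem on the multiplicity-one varifold $\tfrac12 V$ away from $\cl(\pa^* E)$ is sound. But the core inequality \eqref{minimality KE against diffeos} is where your proposal has a genuine gap. You push forward the minimizing sequence and need $\H^n(\Om\cap\pa(\Phi_j(E_j)))=\int_{\Om\cap\pa E_j}J^{\pa E_j}\Phi_j\,d\H^n\to\int_K\theta\,J^Kf\,d\H^n$, which requires convergence of the \emph{varifolds} $\var(\Om\cap\pa E_j,1)$ to $\var(K,\theta)$. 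What Theorem \ref{kms:existence} provides is only weak-$*$ convergence of the weights together with convergence of total mass, and for the collapsed (multiplicity-$2$) part this does \emph{not} imply varifold convergence: a fine zigzag of fixed slope converges as a measure to $2\,\H^1$ on a segment with the correct mass, yet its tangents do not converge to the tangent of the segment, so $\int J^{M_j}f$ need not converge to $2\int_K J^Kf$. Since the $E_j$ are merely minimizing and carry no a priori first variation bounds, Allard compactness cannot be invoked to upgrade the convergence, and your parenthetical ``available from \cite{kms,kms2}'' is precisely the unproved point on which the whole argument rests.

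The route actually taken in the cited works --- and generalized in Section \ref{s:almgren_minimal} of the present paper --- goes in the opposite direction: rather than pushing the minimizing sequence forward through $f$, one builds explicit elements of $\E$ out of the \emph{limit} pair $(f(K),f(E))$ by one-sided tubular neighborhoods (Lemma \ref{l:one_sided_fattening} with $U=\emptyset$ is exactly \cite[Lemma 3.2]{kms2}), checks that their boundaries remain $\C$-spanning, and shows their areas converge to $\F(f(K),f(E))$ via Minkowski-content/first-variation estimates; then $\psi(\e)\le\F(f(K),f(E))$ follows from admissibility and lower semicontinuity of $\psi$. This avoids any convergence of tangent planes along the minimizing sequence, at the price of needing the orientability/regularity of $K$ off a meager set, which is why the result is organized as a bootstrap in \cite{kms,kms2}. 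Two smaller points: your reduction to compositions of flow maps proves less than the stated claim for arbitrary volume-preserving diffeomorphisms of $\Om$ (harmless for (i)--(ii), but worth flagging); and in (ii) the everywhere-smoothness of $\Om\cap\pa^*E$ does not follow from Allard alone, since collapsed sheets could a priori accumulate on $\pa^*E$ and raise the density above $1$ --- your proposed surgery to $(\Lambda,r_0)$-minimality is the right kind of idea but is the second place where the real work is being waved at rather than done.
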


\subsection{The exterior collapsed region of a generalized minimizer}\label{section collapsed} In \cite{kms2} we have started the study of the {\bf exterior collapsed region}
\[
K\setminus\cl(E)
\]
of a generalized minimizer $(K,E)$ of $\psi(\e)$. Indeed, the main result of \cite{kms2} is that if $K\setminus\cl(E)\ne\emptyset$, then the Lagrange multiplier $\l$ appearing in \eqref{stationary main} is non-positive, a fact that, in turn, implies the validity of the convex hull inclusion $K\subset{\rm conv}(W)$; see \cite[Theorem 2.8, Theorem 2.9]{kms2}. In this paper, we continue the study of $K\setminus\cl(E)$ by looking at its regularity. The basic fact that the multiplicity-one $n$-varifold defined by $K$ is stationary in $\Om\setminus\cl(E)$, see \eqref{stationary main}, implies the existence of a relatively closed subset $\Sigma$ of $K\setminus\cl(E)$ such that
\begin{equation}
  \label{sing and reg}
  \mbox{$K\setminus(\cl(E)\cup\Sigma)$ is a smooth minimal hypersurface}
\end{equation}
and $\H^n(\Sigma)=0$. The main result of this paper greatly improves this picture, by showing that $\Sigma$ is much smaller than $\H^n$-negligible.

\begin{theorem}[Sharp regularity for the exterior collapsed region] \label{t:main} Assume that $\ell=\ell(W,\C)<\infty$, $\Om$ has smooth boundary, and that \eqref{unica hp} holds. If $(K,E)$ is a generalized minimizer of $\psi(\e)$, then there exists a closed subset $\Sigma$ of $K\setminus\cl(E)$ such that $K\setminus(\Sigma\cup\cl(E))$ is a smooth minimal hypersurface,
\[
\Sigma=\emptyset\qquad\mbox{if $1\le n\le 6$}\,,
\]
$\Sigma$ is locally finite in $\Om\setminus\cl(E)$ if $n=7$, and $\Sigma$ is countably $(n-7)$-rectifiable (and thus has Hausdorff dimension $\le n-7$) if $n\ge 8$. In particular, in the physically relevant cases $n=1$ and $n=2$, the exterior collapsed region $K\setminus\cl(E)$ is a smooth stable minimal hypersurface in $\Om\setminus\cl(E)$.
\end{theorem}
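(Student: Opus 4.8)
The plan is to work in the open set $U:=\Om\setminus\cl(E)$, where by \eqref{theta density} the varifold $V:=\theta\,\H^n\mres K$ of \eqref{minimizing seq conv to gen minimiz} restricts to the multiplicity-two integral varifold $2\,\H^n\mres(K\cap U)$, and to prove that $V$ is \emph{stationary}, \emph{stable on its regular part}, and has \emph{no classical singularities}. The sharp regularity then follows from Wickramasekera's regularity theory for stable codimension-one integral varifolds, supplemented for $n\ge 8$ by the rectifiability of singular sets in the spirit of Naber--Valtorta. Stationarity of $V$ in $U$ is immediate from \eqref{stationary main}: if $X\in C^1_c(U;\R^{n+1})$ then both integrals over $\pa^*E$ vanish (as $\pa^*E\cap U=\emptyset$), leaving $\int_K\Div^K X\,d\H^n=0$. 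This is exactly \eqref{sing and reg}, providing an a priori singular set $\Sigma\subset U$ with only $\H^n(\Sigma)=0$; the task is to show $\Sigma$ has codimension at least $7$.

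\textbf{Stability.} Let $M:=K\setminus(\Sigma\cup\cl(E))$ be the smooth minimal hypersurface of \eqref{sing and reg}, with unit normal $\nu$. Given $\phi\in C^1_c(M)$, choose $X\in C^1_c(\R^{n+1};\R^{n+1})$ supported in a tubular neighborhood of $\spt\phi$ disjoint from the closed set $\cl(E)\cup\Sigma\cup W$ and with $X=\phi\,\nu$ on $M$, and let $\{f_t\}_{|t|<t_0}$ be its flow. Since $X\equiv 0$ near $\cl(E)$, we have $f_t(E)=E$, hence $|f_t(E)|=\e$; since $X\equiv 0$ near $W$, $f_t$ restricts to a diffeomorphism of $\Om$ which maps $\C$-spanning sets to $\C$-spanning sets ($\C$ being homotopically closed), so $(f_t(K),E)\in\KK$. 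By \eqref{minimality KE against diffeos}, $\F(K,E)\le\F(f_t(K),E)$, and as $f_t$ fixes $\pa^*E$ pointwise this reduces, via \eqref{def:relaxed energy}, to
\[
\H^n(K\setminus\pa^*E)\ \le\ \H^n\big(f_t(K\setminus\pa^*E)\big)\,.
\]
Since $f_t=\id$ off a compact subset of $M$, the right-hand side equals $\H^n\big(f_t(M\cap\spt\phi)\big)$ plus a finite $t$-independent constant; differentiating twice at $t=0$ and using that $M$ is minimal (so that the first variation of area vanishes and the second variation along the normal field $\phi\,\nu$ is the Jacobi quadratic form) we obtain $\int_M\big(|\nabla\phi|^2-|A|^2\phi^2\big)\,d\H^n\ge 0$, where $A$ is the second fundamental form of $M$. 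Thus $V$ is stable on $M$.

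\textbf{No classical singularities (the main obstacle).} The two properties above follow from the Euler--Lagrange equation and diffeomorphism minimality alone, and are not enough: the cones $\bY^1\times\R^{n-1}$ and $\bT^2\times\R^{n-2}$, which in general occur for the Plateau problem $\ell$ (Figure \ref{fig yt}), are stationary and stable. One must therefore exclude that $V$ has a classical (triple-junction) singularity, i.e.\ a point near which $K$ is the union of three $C^{1}$ hypersurfaces-with-boundary meeting at $120^\circ$ along a common $(n-1)$-dimensional submanifold; since a $\bT^2\times\R^{n-2}$ point carries such singularities along its edges, this also rules out $\bT^2$-type tangent cones, and hence gives, with the first two steps, the full statement. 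Here one uses the extra structure in \eqref{minimizing seq conv to gen minimiz}: $K\cap U$ arises, with multiplicity two, as the varifold limit of the reduced boundaries $\Om\cap\pa E_j$ of genuine minimizers $E_j$ of $\psi(\e)$, and $|E_j\cap B|\to0$ for every $B\cc U$. Were $z\in K\cap U$ a classical singularity, then for a good radius $r$ with $\overline{B_r(z)}\subset U$ one has, by the monotonicity formula, $\H^n\big(\Om\cap\pa E_j\cap B_r(z)\big)\to 2\,\H^n\big(K\cap B_r(z)\big)\ge 3\,\om_n r^n$; one then replaces $E_j$ inside $B_r(z)$ by a thin film of the \emph{same} volume built around a single embedded hypersurface, chosen so as to preserve the localized ($(\mathbf C,r)$-type) spanning obligations of $\Om\cap\pa E_j$ in $B_r(z)$ --- this is where the competitor constructions and localized spanning criterion of \cite{kms,kms2} enter. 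The resulting $\widetilde E_j\in\E$ has $|\widetilde E_j|=\e$, still has $\Om\cap\pa\widetilde E_j$ $\,\C$-spanning $W$, and satisfies $\H^n(\Om\cap\pa\widetilde E_j)\le\H^n(\Om\cap\pa E_j)-c\,r^n$ for a dimensional constant $c>0$ and all large $j$, contradicting that $\{E_j\}_j$ is a minimizing sequence for $\psi(\e)$. The delicate point is to carry out this replacement while simultaneously controlling the volume (by tuning the film thickness), gaining a definite amount of area (by collapsing the triple junction to a single sheet), and not destroying the $\C$-spanning property --- this last being the genuinely subtle issue.

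\textbf{Conclusion.} Now $V$ is a stationary, stable, integral $n$-varifold in the open set $U=\Om\setminus\cl(E)$ with no classical singularities, so Wickramasekera's regularity theorem yields: $\Sigma=\emptyset$ if $1\le n\le 6$; $\Sigma$ discrete, hence locally finite in $U$, if $n=7$; and $\Sigma$ of Hausdorff dimension at most $n-7$, and countably $(n-7)$-rectifiable, if $n\ge 8$. On $U\setminus\Sigma$ the set $K$ is a smooth embedded minimal hypersurface, which is stable by the second step. In particular, for $n=1,2$ one has $\Sigma=\emptyset$, so $K\setminus\cl(E)$ is a smooth stable minimal hypersurface in $\Om\setminus\cl(E)$.
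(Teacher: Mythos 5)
Your high-level strategy (stationarity from \eqref{stationary main}, stability from \eqref{minimality KE against diffeos}, exclusion of classical singularities, then Wickramasekera plus Naber--Valtorta) is a legitimate and in fact somewhat leaner framing than the paper's, which instead first proves that $K\setminus\cl(E)$ is an Almgren minimal set under arbitrary Lipschitz deformations (Theorem \ref{proposition Lipschitz minimizing}) and then invokes Taylor's and Simon's structure theorems before excluding $Y$-points. The stationarity and stability steps are fine and match the paper's use of \eqref{stationary main} and \eqref{minimality KE against diffeos}.

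The genuine gap is your third step. You correctly identify that excluding triple-junction singularities is ``the main obstacle,'' but you do not prove it: you propose to surger the minimizing sequence $E_j$ into ``a thin film of the same volume built around a single embedded hypersurface'' and then concede that controlling the volume, gaining area, and preserving the $\C$-spanning property are ``delicate'' and ``genuinely subtle.'' That concession is precisely the content of Sections 3--6 of the paper, and your sketch does not survive scrutiny on the spanning point: a curve $\gamma\in\C$ may meet $K$ only inside $B_r(z)$, and after collapsing three sheets to one there is no reason $\gamma$ still meets the new boundary. The paper resolves this by replacing the $Y$-region with an open set $\Phi(\Delta_{r_0})$ that \emph{contains} $K\cap\Phi(U_{r_0})$, so that any such $\gamma$ enters $\Phi(\Delta_{r_0})$ and, being homotopically nontrivial while $\Phi(\Delta_{r_0})$ is contractible, must cross $\pa\Phi(\Delta_{r_0})$; away from the surgery region the spanning is preserved by the one-sided fattening of Lemma \ref{l:one_sided_fattening}, whose area estimate in turn rests on the Minkowski-content theorem (Theorem \ref{thm fine}) and the uniform density bounds of Theorem \ref{thm boundary density estimates}. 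The volume bookkeeping is also not a matter of ``tuning the film thickness'': the paper restores the $O(r_0^{n+1})$ volume excess by a volume-fixing variation at a regular point of $\pa^*E$, whose area cost is $|\lambda|\,t+O(t^2)$, and the gain $O(r_0^n)$ beats this cost only after shrinking $r_0$ in terms of $|\lambda|$ --- the Lagrange multiplier plays an essential quantitative role that your proposal never engages. As written, the key step of the theorem is asserted rather than proved.
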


\begin{remark}[Uniform local finiteness of the singular set]\label{remark locally finite}
  {\rm In fact, when $n\ge 7$ and $\Sigma$ is possibly non-empty, we will show that $\Sigma$ has locally finite $(n-7)$-dimensional Minkowski content (and thus locally finite $\H^{n-7}$-measure) in $\Om\setminus\cl(E)$; see section \ref{appendix NV local}.}
\end{remark}

\begin{remark}[Consequences for the minimal surfaces limit]
  \label{remark importante}
  {\rm A striking consequence of Theorem \ref{t:main} is that the exterior collapsed region $K\setminus\cl(E)$ is dramatically {\it more regular} than the generic minimizer of Plateau's problem $\ell$. For instance, in the physical dimension $n=2$, one can apply the work of Taylor \cite{taylor76}, as detailed for example in section \ref{section regularity of Almgren min}, to conclude that a minimizer $S$ of $\ell$ (which is known to be an Almgren minimal set in $\Om$ as defined in \eqref{almgren minimizer}) is locally diffeomorphic either to a plane (in which case, $S$ is locally a smooth minimal surface), or to the cone $\bY^1\times\R$ ($Y$-points), or to the cone $\bT^2$ ($T$-points); and, indeed these singularities are easily observable in soap films. At the same time, by Theorem \ref{t:main}, when $n=2$ the singular set of the exterior collapsed region is empty. Similarly, in arbitrary dimensions, the singular set of an $n$-dimensional minimizer $S$ of $\ell$ could have codimension one in $S$, while, by Theorem \ref{t:main}, the singular set of $K\setminus\cl(E)$ has {\it at least} codimension {\it seven} in $K\setminus\cl(E)$. This huge regularity mismatch between the exterior collapsed region and the typical minimizer in Plateau's problem has a second point of interest, as it provides strong evidence towards the conjecture that, in the minimal surfaces limit ``$(K_h,E_h)\to S$'' described in Theorem \ref{thm msl}, low codimension singularities of minimizers $S$ of $\ell$ may be contained (or even coincide, as it seems to be the case when $n=1$) with the set of accumulation points of the bulky regions $E_h$. This implication is of course not immediate, and will require further investigation.}
\end{remark}

\subsection{Outline of the proof of Theorem \ref{t:main}} The proof of Theorem \ref{t:main} is based on a mix of regularity theorems from Geometric Measure Theory, combined with two steps which critically hinge upon the specific structure of the variational problem $\psi(\e)$. A breakdown of the argument is as follows:

\medskip

\noindent {\bf Step one ($K\setminus\cl(E)$ is Almgren minimal in $\Om\setminus\cl(E)$):} In \eqref{minimality KE against diffeos} we have proved that $(K,E)$ minimizes $\F$ against diffeomorphic images which preserve the volume of $E$, an information which implies $\H^n(\Sigma)=0$ for the set $\Sigma$ in \eqref{sing and reg}. In this first step we greatly improve this information in the region away from $E$, by allowing for arbitrary Lipschitz deformations. Precisely, we show that $K\setminus\cl(E)$ is an Almgren minimal set in $\Om\setminus\cl(E)$, i.e.
\begin{equation} \label{e:almgren}
\H^n(K\cap B_r(x)) \leq \H^n(f(K)\cap B_r(x))
\end{equation}
for every Lipschitz map $f \colon \R^{n+1} \to \R^{n+1}$ such that $\{f \ne {\rm id}\} \subset B_r(x)$ and $f(B_r(x)) \subset B_r(x)$, with $B_r(x) \cc \Om \setminus \cl(E)$. Proving \eqref{e:almgren} is delicate, as discussed below.

\medskip

\noindent {\bf Step two ($K\setminus\cl(E)$ has no $Y$-points in $\Om\setminus\cl(E)$):} We construct ``wetting'' competitors, that cannot be realized as Lipschitz images of $K$, to rule out the existence of $Y$-points in $\Sigma$, that is points where $K$ is locally diffeomorphic to the cone $\bY^1\times\R^{n-1}$; see
\begin{figure}
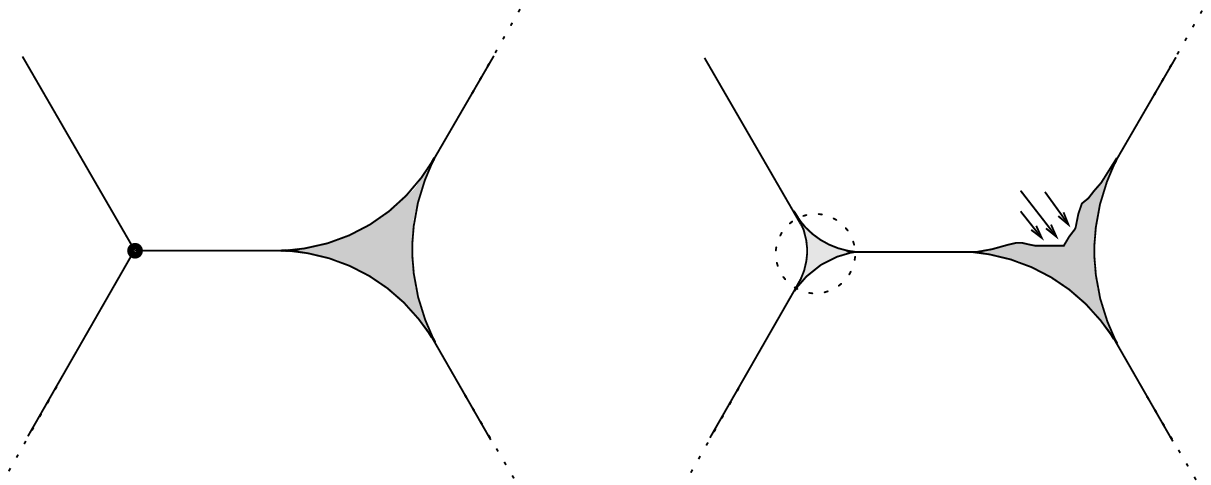
\caption{\small{Wetting competitors: (a) a local picture of a generalized minimizer $(K,E)$ when $n=1$, with a point $p$ of type $Y$; (b) the wetting competitor is obtained by first modifying $K$ at a scale $\de$ near $p$, so to save an ${\rm O}(\de)$ of length at the expense of an increase of ${\rm O}(\de^2)$ in area; the added area can be restored by pushing inwards $E$ at some point in $\pa^*E$, with a linear tradeoff between subtracted area and added length: in other words, to subtract an area of ${\rm O}(\de^2)$, we are increasing length by an ${\rm O}(\de^2)$ (whose size is proportional to the absolute value of the Lagrange multiplier $\l$ of $(K,E)$). If $\de$ is small enough in terms of $\lambda$, the ${\rm O}(\de)$ savings in length will eventually beat the ${\rm O}(\de^2)$ length increase used to restore the total area. In higher dimensions (where length and area become $\H^n$-measure and volume/Lebesgue measure respectively), wetting competitors are obtained by repeating this construction in the cylindrical geometry defined by the spine $\{0\}\times\R^{n-1}$ of $\bY^1\times\R^{n-1}$ near the $Y$-point $p$.}}
\label{fig wet}
\end{figure}
Figure \ref{fig wet}.

\medskip

\noindent {\bf Step three:} We combine the Almgren minimality of $K\setminus\cl(E)$ in $\Om\setminus\cl(E)$ and the absence of $Y$-points in $K\setminus\cl(E)$ with some regularity theorems by Taylor \cite{taylor76} and Simon \cite{Simon_cylindrical}, to conclude that the singular set $\Sigma$ of $K\setminus\cl(E)$ is $\H^{n-1}$-negligible. At the same time, \eqref{minimality KE against diffeos} implies that the multiplicity-one varifold associated to $K\setminus\cl(E)$ is not only stationary, but also stable in $\Om\setminus\cl(E)$. Therefore, we can exploit Wickramasekera's far reaching extension \cite{Wic} of a classical theorem of Schoen and Simon \cite{SchoenSimon81} to conclude that $\Sigma$ is empty if $1\le n\le 6$, is locally finite if $n=7$, and is $\H^{n-7+\eta}$-negligible for every $\eta>0$ if $n\ge 8$. This last information, combined with the Naber-Valtorta theorem \cite[Theorem 1.5]{NV_varifolds} implies that when $n\ge8$, $\Sigma$ is countably $(n-7)$-rectifiable, thus completing the proof of the theorem. We notice here that when $n=1,2$, one can implement this strategy by relying solely on Taylor's theorem \cite{taylor76}, thus avoiding the use of the Schoen-Simon-Wickramasekera theory, see Remark \ref{rmk reg if n12}.
Also, one can somehow rely on \cite{SchoenSimon81} only (rather than on the full strength of \cite{Wic}),
 see Remark \ref{rmk no wic}.
Finally, we notice that when $n\ge 8$, by further refining the above arguments, we can also show that $\Sigma$ is locally $\H^{n-7}$-finite: this is discussed in section \ref{appendix NV local}.

\medskip

We close this introduction by further discussing the construction of the competitors needed in carrying over step one of the above scheme. Indeed, this is a delicate point of the argument where we have made some non-obvious technical choices.

\medskip

\noindent {\bf Discussion of step one:} We illustrate the various aspects of the proof of \eqref{e:almgren} by means of
\begin{figure}
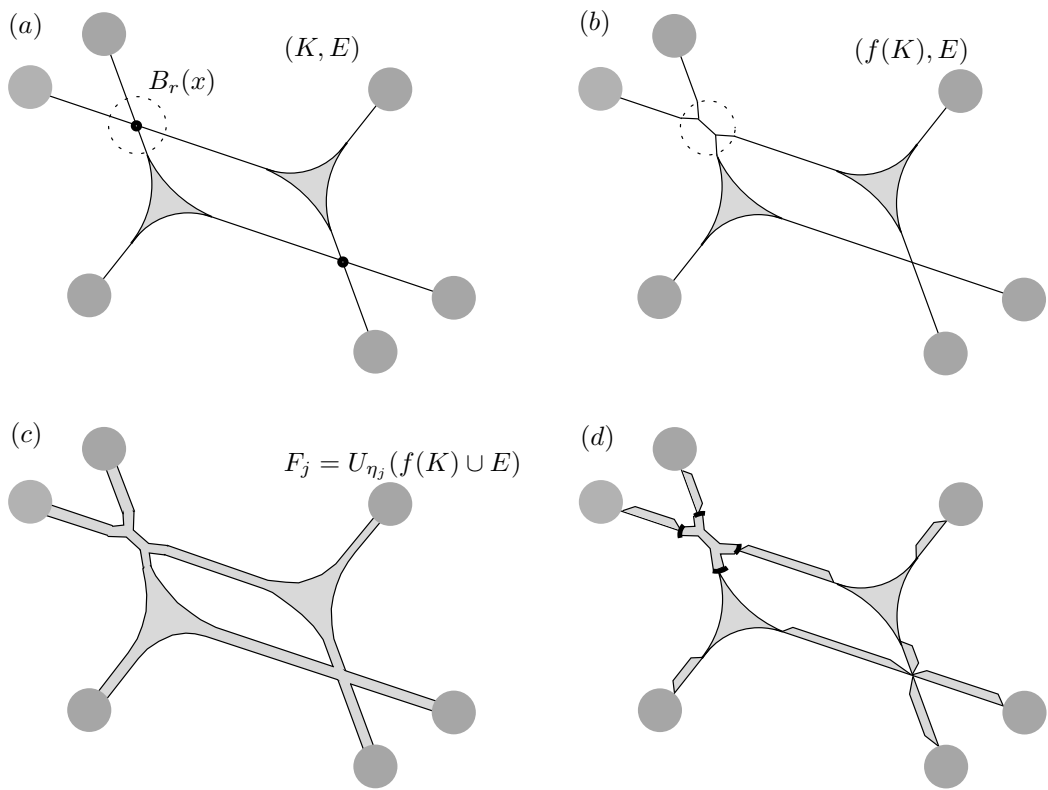
\caption{\small{Proving that the exterior collapsed region is an Almgren minimal set.}}
\label{fig stepone}
\end{figure}
Figure \ref{fig stepone}. In panel (a) we have a schematic representation of a generalized minimizer $(K,E)$ whose exterior collapsed region $K\setminus\cl(E)$ consists of various segments, intersecting along a singular set $\Sigma$ which is depicted by two black disks. We center $B_r(x)$ at one of the points in $\Sigma$, pick $r$ so that $B_r(x)$ is disjoint from $\cl(E)\cup W$, and in panel (b) we depict the effect on $K\cap B_r(x)$ of a typical area-decreasing Lipschitz deformation supported in $B_r(x)$ (notice that such a map is not injective, so \eqref{minimality KE against diffeos} is of no help here). As it turns out, one has $(f(K),E)\in\KK$: the only non-trivial point is showing that $f(K)$ is $\C$-spanning $W$, but this follows quite directly by arguing as in \cite[Proof of Theorem 4, Step 3]{DLGM}. Now, in order to deduce \eqref{e:almgren} from $\psi(\e)=\F(K,E)$ we need to find a sequence $\{F_j\}_j$ in the competition class of $\psi(\e)$ such that $\H^n(\Om\cap\pa F_j)\to \F(f(K),E)$ as $j\to\infty$. The obvious choice, at least in the situation depicted in Figure \ref{fig stepone}, would be taking \footnote{In the general situation, with $K\cap E$ possibly not empty, one should modify the formula for $F_j$ by removing $I_{\eta_j}(K\cap E)$. This fact is taken into account in the actual proof when we consider the set $A_1$ in Lemma \ref{l:one_sided_fattening} below.}
\[
F_j=U_{\eta_j}(f(K)\cup E)\,,
\]
for some $\eta_j\to 0^+$, where $U_\eta(S)$ denotes the open $\eta$-tubular neighborhood of the set $S$, see panel (c); for such a set $F_j$, we want to show that (i) $\H^n(B_r(x)\cap\pa F_j)\to 2\,\H^n(B_r(x)\cap f(K))$ as $j\to\infty$; and (ii) that $\Om\cap\pa F_j$ is $\C$-spanning $W$. Concerning problem (i), taking into account that
\[
\H^n(\Om\cap\pa F_\eta)\approx\frac{|U_\eta(f(K)\cup E)|}{\eta}\qquad\mbox{as $\eta\to 0^+$}\,,
\]
one wants first to show that $f(K)$ is Minkowski regular, in the sense that
\[
\lim_{\eta\to 0^+}\frac{|U_\eta(f(K))|}{2\eta}=\H^n(f(K))\,,
\]
and then to discuss the relation between $|U_\eta(f(K))|$ and $|U_\eta(f(K)\cup E)|$, which needs to keep track of those ``volume cancellations'' due to the parts of $U_\eta(f(K))$ which are contained in $E$. Discussing such cancellations is indeed possible through a careful adaptation of some recent works by Ambrosio, Colesanti and Villa \cite{ambrosiocolevilla,villa}. Addressing the Minkowski regularity of $f(K)$ requires instead the merging of two basic criteria for Minkowski regularity: ``Lipschitz images of compact subsets in $\R^n$ are Minkowski regular'' (Kneser's Theorem \cite{kneser}, see also \cite[3.2.28-29]{FedererBOOK} and \cite[Theorem 2.106]{AFP}) and ``compact $\H^n$-rectifiable sets with uniform density estimates are Minkowski regular'' (due to Ambrosio, Fusco and Pallara \cite[Theorem 2.104]{AFP}). In section \ref{s:Minkowski}, see in particular Theorem \ref{thm fine} below, we indeed merge these criteria by showing that ``Lipschitz images of compact $\H^n$-rectifiable sets with uniform density estimates are Minkowski regular''. (To apply this theorem to $f(K)$ we need of course to obtain uniform density estimates for $K$, which are discussed in section \ref{s:lde}, Theorem \ref{thm boundary density estimates}). We can thus come to a satisfactory solution of problem (i). However, we have not been able to solve problem (ii): in other words, it remains highly non-obvious if a set like $\Om\cap\pa F_j$ is always $\C$-spanning $W$, given the possibly subtle interactions between the geometries of $E$ and $f(K)$ and the operation of taking open neighborhoods. To overcome the spanning problem, we explore the possibility of defining $F_j$ as a {\it one-sided neighborhood} of $f(K)$ (which automatically contains the $\C$-spanning set $f(K)$ in its boundary), rather than as an open neighborhood of $f(K)$ (which contains the $\C$-spanning set $f(K)$ in its interior). As shown in \cite[Lemma 3.2]{kms2}, we can define $\C$-spanning one-sided neighborhoods of a pair $(K,E)\in\KK$ whenever $K$ is smoothly orientable outside of a meager closed subset of $K$. Thanks to Theorem \ref{thm basic regularity}-(ii) a generalized minimizer $(K,E)$ has enough regularity to define the required one-sided neighborhoods of $K$, but this regularity may be lost after applying the Lipschitz map $f$ to $K$: it thus seems that neither approach is going to work. The solution comes {\it by mixing the two methods}, as depicted in panel (d): inside $B_r(x)$, we define $F_j$ by taking an $\eta_j$-neighborhood of $f(K)$ -- which is fine, in terms of proving the $\C$-spanning condition, given the simple geometry of the ball and the care we will put in making sure that $\Om\cap\pa F_j$ contains the spherical subsets $\pa B_r(x)\cap U_{\eta_j}(f(K))$; inside $\Om\setminus\cl(B_r(x))$ we will define $F_j$ by the one-sided neighborhood construction -- notice that we have enough regularity in this region because $f(K)$ and $K$ coincide on $\Om\setminus\cl(B_r(x))$, We will actually need a variant of the one-sided neighborhood lemma \cite[Lemma 3.2]{kms2}, to guarantee that $\pa B_r(x)\cap U_{\eta_j}(f(K))$ is contained in $\Om\cap\pa F_j$, see Lemma \ref{l:one_sided_fattening}.

\medskip

\noindent {\bf Organization of the paper}: Section \ref{section notation} contains a summary of the notation used in the paper. In section \ref{s:Minkowski} we obtain the criterion for Minkowski regularity merging Kneser's theorem with \cite[Theorem 2.104]{AFP}, see Theorem \ref{thm fine}. In section \ref{s:lde} we discuss the lower density bounds up to the boundary wire frame needed to apply Theorem \ref{thm fine} to $K$, while in section \ref{s:almgren_minimal} we put together all these results to show the Almgren minimality of $K\setminus\cl(E)$ in $\Om\setminus\cl(E)$. In section \ref{s:noY} we construct the wetting competitors needed to exclude the presence of $Y$-points of $K\setminus\cl(E)$ in $\Om\setminus\cl(E)$, and finally, in section \ref{s:graph}, we illustrate the application of various regularity theorems \cite{taylor76,Simon_cylindrical,SchoenSimon81,Wic,NV_varifolds} needed to deduce Theorem \ref{t:main} from our variational analysis. Finally, in section \ref{appendix NV local}, we exploit more specifically the Naber-Valtorta results on the quantitative stratification of stationary varifolds and prove the local $\H^{n-7}$-estimates for $\Sigma$ mentioned in Remark \ref{remark locally finite}.

\medskip

\noindent {\bf Acknowledgment:} This work was supported by the NSF grants DMS 2000034, FRG-DMS 1854344, and RTG-DMS 1840314.

\section{Notation and terminology}\label{section notation} We summarize some basic definitions, mostly following \cite{SimonLN,maggiBOOK}.

\medskip

\noindent {\bf Radon measures and rectifiability:} We work in the Euclidean space $\R^{n+1}$ with $n \geq 1$. For $A \subset \R^{n+1}$, $\cl(A)$ denotes the topological closure of $A$ in $\R^{n+1}$, while $U_\eta(A)$ and $I_\eta(A)$ are the open and closed $\eta$-tubular neighborhoods of $A$, respectively. The open ball centered at $x \in \R^{n+1}$ with radius $r>0$ is denoted $B_r(x)$; given $1\leq k\leq n$ and a $k$-dimensional linear subspace $L \subset \R^{n+1}$, $B^L_r(x)$ denotes instead the open disc $B_r(x) \cap (x+L)$, and $B^k_r(x)$ is the corresponding shorthand notation when the subspace $L$ is clear from the context. We use the shorthand notation $B_r=B_r(0)$ and $B_r^k=B_r^k(0)$. If $A \subset \R^{n+1}$ is (Borel) measurable, then $|A|=\mathcal{L}^{n+1}(A)$ and $\H^s(A)$ denote its Lebesgue and $s$-dimensional Hausdorff measures, respectively, and we set $\om_k= \H^k(B^k_1)$. If $\mu$ is a Radon measure in $\R^{n+1}$, $A \subset \R^{n+1}$ is Borel, and $f \colon \R^{n+1} \to \R^d$ is continuous and proper, then $\mu \mres A$ and $f_\sharp \mu$ denote the restriction of $\mu$ to $A$ and the  push-forward of $\mu$ through $f$, respectively defined by $(\mu \mres A)(E) = \mu(A \cap E)$ for every Borel $E \subset \R^{n+1}$ and $(f_\sharp \mu)(F) = \mu(f^{-1}(F))$ for every Borel $F \subset \R^d$. The {\bf Hausdorff dimension} of $A$ is denoted $\dim_{\H}(A)$: it is the infimum of all real numbers $t \geq 0$ such that $\H^s(A) = 0$ for all $s>t$. Given an integer $1\leq k \leq n+1$, a Borel measurable set $M \subset \R^{n+1}$ is {\bf countably $k$-rectifiable} if it can be covered by countably many Lipschitz images of $\R^k$ up to a set of zero $\H^k$ measure; $M$ is {\bf (locally) $\H^k$-rectifiable} if it is countably $k$-rectifiable and, in addition, its $\H^k$ measure is (locally) finite. If $M$ is locally $\H^k$-rectifiable, then for $\H^k$-a.e. $x \in M$ there exists a unique $k$-dimensional linear subspace of $\R^{n+1}$, denoted $T_xM$, with the property that $\H^k \mres ((M-x)/r) \weakstar \H^k \mres T_xM$ in the sense of Radon measures in $\R^{n+1}$ as $r \to 0^+$: $T_xM$ is called the approximate tangent space to $M$ at $x$. If $f \colon \R^{n+1} \to \R^{n+1}$ is locally Lipschitz and $M$ is locally $\H^k$-rectifiable then the tangential gradient $\nabla^M f$ and the tangential jacobian $J^Mf$ are well defined at $\H^k$-a.e. point in $M$.

\medskip

\noindent {\bf Sets of finite perimeter:} A Borel set $E \subset \R^{n+1}$ is: of {\bf locally finite perimeter} if there exists an $\R^{n+1}$-valued Radon measure $\mu_E$  such that $\langle \mu_E, X \rangle = \int_E \Div(X)\, dx$ for all vector fields $X \in C^1_c(\R^{n+1};\R^{n+1})$; of {\bf finite perimeter} if, in addition, $P(E) = |\mu_E|(\R^{n+1})$ is finite. For any Borel set $F \subset \R^{n+1}$, the  relative perimeter of $E$ in $F$ is then defined by $P(E;F) = |\mu_E|(F)$. The {\bf reduced boundary} of a set $E$ of locally finite perimeter is the set $\pa^*E$ of all points $x \in \R^{n+1}$ such that the vectors $|\mu_E|(B_r(x))^{-1}\,\mu_E(B_r(x))$ converge, as $r\to 0^+$, to a vector $\nu_E(x) \in \mathbb{S}^n$: $\nu_E(x)$ is called the {\bf outer unit normal} to $\pa^*E$ at $x$. By De Giorgi's structure theorem, $\pa^*E$ is locally $\H^n$-rectifiable, with $\mu_E = \nu_E\,\H^n \mres \pa^*E$ and $|\mu_E| = \H^n \mres \pa^*E$.

\medskip

\noindent {\bf Integral varifolds}: An {\bf integral $n$-varifold} $V$ on an open set $U\subset\R^{n+1}$ is a continuous linear functional on $C^0_c(U\times G_n^{n+1})$ (where $G_n^{n+1}$ is the set of unoriented $n$-dimensional planes in $\R^{n+1}$) corresponding to a locally $\H^n$-rectifiable set $M$ in $U$, and a non-negative, integer valued function $\theta \in L^1_{{\rm loc}}(\H^n\mres M)$, so that
\[
V(\varphi)=\var(M,\theta)(\varphi) = \int_M \varphi(x,T_xM) \,\theta(x)\,d\H^n(x) \qquad \mbox{for all $\varphi \in C^0_c(U\times G_n^{n+1})$}\,.
\]
The function $\theta$, which is uniquely defined only $\H^n$-a.e. on $M$ is called the {\bf multiplicity} of $V$, while the Radon measure $\|V\|=\theta\,\H^n\mres M$ is the {\bf weight} of $V$ and $\spt\,V=\spt\,\|V\|$ is the {\bf support} of $V$. If $\Phi \colon U \to U'$ is a diffeomorphism, the {\bf push-forward of $V=\var(M,\theta)$ through $\Phi$} is the integral $n$-varifold $\Phi_\sharp V = \var(\Phi(M), \theta \circ \Phi^{-1})$ on $U'$. If $X\in C^1_c(U;\R^{n+1})$, then $\Div^TX=\vphi(x,T)$ defines a function $\vphi\in C^0_c(U\times G_n^{n+1})$: correspondingly, one says that $\vec{H} \in L^1_{{\rm loc}}(U;\R^{n+1})$ is the {\bf generalized mean curvature vector} of $V$ if
\begin{equation}
  \label{gen min curv vector}
  \int_M\,\theta\,\Div^MX\,d\H^n= \int_M X \cdot \vec{H} \, \theta\,d\H^n \qquad\forall X\in C^1_c(U;\R^{n+1})\,.
\end{equation}
When $\vec{H}=0$ we say that $V$ is {\bf stationary} in $U$: for example, if $M$ is a minimal hypersurface in $U$, then $V=\var(M,1)$ is stationary in $U$. Area monotonicity carries over from minimal surfaces to stationary varifolds, in the sense that the density ratios
\[
\frac{\|V\|(B_r(x))}{\om_n\,r^n}\quad\mbox{are increasing in $r\in\big(0,\dist(x,\pa U)\big)$}\,,
\]
with limit value as $r\to 0^+$ denoted by $\Theta_V(x)$ and called the {\bf density} of $V$ at $x$.

\section{Minkowski content of rectifiable sets} \label{s:Minkowski} The goal of this section is merging two well-known criteria for Minkowski regularity, Kneser's Theorem \cite{kneser} and \cite[Theorem 2.104]{AFP}, into Theorem \ref{thm fine} below. As explained in the introduction, this result will then play a crucial role in proving the Almgren minimality of exterior collapsed regions. It is convenient to introduce the following notation: given a compact set $Z\subset\R^d$ and an integer $k \in \{0,\ldots,d\}$, we define the {\bf upper and lower $k$-dimensional Minkowski contents} of $Z$ as
 \begin{eqnarray*}
  \UM^k(Z)&=&\limsup_{\eta\to 0^+}\frac{|U_\eta(Z)|}{\om_{d-k}\,\eta^{d-k}}\,,
  \\
  \LM^k(Z)&=&\liminf_{\eta\to 0^+}\frac{|U_\eta(Z)|}{\om_{d-k}\,\eta^{d-k}}\,.
\end{eqnarray*}
When $\UM^k(Z)=\LM^k(Z)$ we denote by $\M^k(Z)$ their common value, and call it the {\bf $k$-dimensional Minkowski content} of $Z$. If the $k$-dimensional Minkowski content of $Z$ exists, we say further that $Z$ is {\bf Minkowski $k$-regular} provided
\begin{equation} \label{d:Minkowski regularity}
\M^k(Z) \;=\; \H^k(Z)\,.
\end{equation}
It is easily seen that any $k$-dimensional $C^2$-surface with boundary in $\R^d$ is Minkowski $k$-regular, but, as said, more general criteria are available.

\begin{theorem}[Kneser's Theorem]\label{thm Federer 3229} If $Z\subset\R^k$ is compact and $f:\R^k\to\R^d$ is a Lipschitz map, then $f(Z)$ is Minkowski $k$-regular.
\end{theorem}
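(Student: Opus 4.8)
\emph{Setup and reduction.} Write $S:=f(Z)$. As a continuous image of a compact set, $S$ is compact; as a Lipschitz image of a bounded subset of $\R^k$ it is $\H^k$-rectifiable, and by the area formula $\H^k(S)\le\int_Z J^{\R^k}f\,d\mathcal L^k\le(\Lip f)^k\,\mathcal L^k(Z)<\infty$. Moreover, since $\int_Z J^{\R^k}f\,d\mathcal L^k=\int_{\R^d}\#\bigl(Z\cap f^{-1}(y)\bigr)\,d\H^k(y)$, the multiplicity function $m(y):=\#\bigl(Z\cap f^{-1}(y)\bigr)$ lies in $L^1(\H^k\mres S)$. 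Because $\LM^k\le\UM^k$ always holds, the theorem reduces to the two inequalities $\H^k(S)\le\LM^k(S)$ and $\UM^k(S)\le\H^k(S)$.

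\emph{Lower bound.} Fix $\eps>0$ and use the structure of rectifiable sets to find pairwise disjoint compact sets $\Sigma_1,\Sigma_2,\dots$, each a compact subset of a $C^1$ $k$-dimensional submanifold $M_m$ of $\R^d$, with $\H^k\bigl(S\setminus\bigcup_m\Sigma_m\bigr)<\eps$. Such a $\Sigma_m$ is Minkowski $k$-regular: for $\eta$ small the normal $\eta$-disk bundle over $\Sigma_m$ embeds and is contained in $U_\eta(\Sigma_m)$, which in turn sits inside the normal $\eta$-disk bundle over the compact set $I_\eta(\Sigma_m)\cap M_m$; both bundles have volume $(1+o(1))\,\om_{d-k}\eta^{d-k}\H^k(\Sigma_m)$ and $\H^k\bigl(I_\eta(\Sigma_m)\cap M_m\bigr)\downarrow\H^k(\Sigma_m)$. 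Now fix $p$: finitely many pairwise disjoint compact sets lie at positive mutual distance, so for $\eta$ small the tubes $U_\eta(\Sigma_1),\dots,U_\eta(\Sigma_p)$ are pairwise disjoint subsets of $U_\eta(S)$, whence $|U_\eta(S)|\ge\sum_{m\le p}|U_\eta(\Sigma_m)|$; dividing by $\om_{d-k}\eta^{d-k}$, using the superadditivity of $\liminf$ on finite sums and the Minkowski regularity of each $\Sigma_m$, gives $\LM^k(S)\ge\sum_{m\le p}\H^k(\Sigma_m)$. Letting $p\to\infty$ and then $\eps\to0$ yields $\H^k(S)\le\LM^k(S)$.

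\emph{Upper bound.} The first ingredient is a transfer estimate: \emph{for every Lipschitz $g\colon\R^k\to\R^d$ and every compact $C\subset\R^k$ one has $\UM^k(g(C))\le\int_C J^{\R^k}g\,d\mathcal L^k$.} One discards the $\mathcal L^k$-null non-differentiability set of $g$ (its image has vanishing $\UM^k$: covering a compact $\mathcal L^k$-null set by cubes of side $\sim\eta/\Lip g$ needs $o(\eta^{-k})$ of them, and fattening their images by $\eta$ costs $o(\om_{d-k}\eta^{d-k})$), and on the rest partitions $C$, at a scale $\ll\eta$, into pieces on which $g$ is nearly affine; the image of each piece lies within $o(\eta)$ of a flat $k$-cell of $\H^k$-measure at most $\bigl(J^{\R^k}g\bigr)\,\mathcal L^k(\text{piece})+o(\cdot)$, which is Minkowski $k$-regular, and one sums, controlling the mesh error. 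Combined with the area formula this has the crucial corollary that \emph{any set $g(C)$ with $\H^k(g(C))=0$ satisfies $\UM^k(g(C))=0$} (the bound $\int_{g(C)}\#(C\cap g^{-1}(\cdot))\,d\H^k$ integrates over an $\H^k$-null set). Writing $S=\bigl(\bigcup_{m\le p}\Sigma_m\bigr)\cup G_p$ with $G_p:=\overline{S\setminus\bigcup_{m\le p}\Sigma_m}$ and using finite subadditivity of $A\mapsto|U_\eta(A)|$ together with the Minkowski regularity of the $\Sigma_m$, we get $\UM^k(S)\le\sum_{m\le p}\H^k(\Sigma_m)+\UM^k(G_p)\le\H^k(S)+\UM^k(G_p)$, and $G_p=f(\wi Z_p)$ with $\wi Z_p:=Z\cap f^{-1}(G_p)$ compact; so by the transfer estimate and the area formula,
\[
\UM^k(G_p)\ \le\ \int_{\wi Z_p}J^{\R^k}f\,d\mathcal L^k\ =\ \int_{G_p}\#\bigl(\wi Z_p\cap f^{-1}(y)\bigr)\,d\H^k(y)\ \le\ \int_{G_p}m(y)\,d\H^k(y)\,.
\]
If the decomposition can be chosen so that $\H^k(G_p)\to0$ as $p\to\infty$, then the right-hand side tends to $0$ by absolute continuity of the integral of $m\in L^1(\H^k\mres S)$, and $\UM^k(S)\le\H^k(S)$ follows.

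\emph{Main obstacle.} The delicate point is precisely making the leftover $G_p$ negligible. One cannot simply apply the transfer estimate to all of $Z$, since $\int_Z J^{\R^k}f\,d\mathcal L^k$ over-counts the folding of $f$ and equals $\H^k(S)$ only when $f$ is essentially injective; so one is forced to peel off finitely many Minkowski-regular pieces of the \emph{image} and argue the rest away. This requires that a Lipschitz image of a bounded set cannot be ``spread out'' --- quantitatively, that $\H^k$-small subsets of $S$ have small upper Minkowski content, and that the $\H^k$-null remainder does not become topologically dense in positive-$\H^k$-measure portions of the pieces, so that the closed sets $G_p$ genuinely shrink ($\H^k(G_p)\to0$). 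This is the heart of Kneser's argument; I would either reproduce or cite the careful version carried out in \cite[3.2.28--29]{FedererBOOK} and \cite[Theorem 2.106]{AFP}. The mesh-error bookkeeping in the transfer estimate is a second, milder, place to be careful, and everything else (the rectifiable decomposition, the positive-distance trick, the Minkowski regularity of compact subsets of $C^1$ submanifolds) is standard.
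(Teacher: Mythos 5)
The paper does not prove this statement: it is Kneser's theorem, quoted from \cite{kneser} (see also \cite[3.2.28--29]{FedererBOOK} and \cite[Theorem 2.106]{AFP}) and used as a black box in the proof of Theorem \ref{thm fine}. So there is no in-paper proof to compare against; the relevant benchmarks are the classical proofs you yourself cite, and the closely parallel argument the paper gives for its generalization, Theorem \ref{thm fine}.

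Your setup and lower bound are fine (the inequality $\LM^k\ge\H^k$ for compact rectifiable sets is exactly what the paper quotes from \cite[Proposition 2.101]{AFP}), but the upper bound has a genuine gap, and it is the one you flag. The root problem is that $U_\eta(A)=U_\eta(\cl(A))$, hence $\UM^k(A)=\UM^k(\cl(A))$: upper Minkowski content cannot distinguish a set from its closure. This breaks your argument in two places. First, in the ``transfer estimate'' you cannot simply discard the non-differentiability set of $g$, nor the Lusin-type exceptional set where $g$ fails to be nearly affine: these sets are $\mathcal{L}^k$-null or $\mathcal{L}^k$-small but may be dense in $C$, so their contribution to $|U_\eta(g(C))|$ is not controlled by covering a \emph{compact} null set with $o(\eta^{-k})$ cubes. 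Second, and decisively, $\H^k(G_p)$ need not tend to $0$: nothing in the standard rectifiable decomposition prevents the pieces $\Sigma_m$ from being nowhere dense in $S$ (think of $S=[0,1]$ exhausted by fat Cantor sets), in which case $G_p=\cl\big(S\setminus\bigcup_{m\le p}\Sigma_m\big)=S$ for every $p$. The known proofs avoid ever taking the closure of the remainder: Federer, \cite{AFP}, and the paper's own proof of Theorem \ref{thm fine} estimate $|U_\eta(\mbox{remainder})|$ directly, by covering $S\setminus U_{\l\eta}\big(\bigcup_{i\le N}S_i\big)$ with a controlled number of balls of radius $\l\eta$ (Besicovitch plus a ball-counting argument pulled back through $f$, which is where a lower density bound on $Z$ enters), and they dispose of the degenerate, zero-Jacobian part not by discarding a null set but via the injectivization $f_\e(x)=(f(x),\e\,x)$ together with a Fubini projection estimate, exactly as in Proposition \ref{prop higher codim zeor jac}. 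As written, the step you defer to \cite[3.2.28--29]{FedererBOOK} and \cite[Theorem 2.106]{AFP} is the whole theorem, so your proposal is a correct road map with the essential difficulty correctly located but not resolved.
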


\begin{theorem}[{\cite[Theorem 2.106]{AFP}}]\label{thm AFP 2104} If $Z$ is a compact, countably $k$-rectifiable set in $\R^d$, and if there exists a Radon measure $\nu$ on $\R^d$ with $\nu\Lt\H^k$ and
  \[
  \nu(B_r(x))\ge c\,r^k\,,\qquad\forall x\in Z\,,\forall r<r_0\,,
  \]
  for positive constants $c$ and $r_0$, then $Z$ is Minkowski $k$-regular.
\end{theorem}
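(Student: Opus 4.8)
The statement is the conjunction of the two inequalities $\LM^k(Z)\ge\H^k(Z)$ and $\UM^k(Z)\le\H^k(Z)$. The plan is to deduce both by approximating $Z$, up to an arbitrarily small amount of $\H^k$-measure, by \emph{finitely many compact} sets each of which is a Lipschitz image of a compact subset of $\R^k$ — to which Kneser's Theorem \ref{thm Federer 3229} applies verbatim — and to use the density hypothesis only to show that the error made in this approximation is negligible at scale $\eta^{d-k}$. As preliminary reductions: the bound $\nu(B_r(x))\ge c\,r^k$ on $Z$, together with the finiteness of the Radon measure $\nu$ on the compact set $I_{r_0}(Z)$, forces $\H^k(Z)<\infty$ (a scale-by-scale $5r$-covering argument comparing a Vitali family of balls centered on $Z$ with the mass of $\nu$ gives $\H^k_\rho(Z)\le 5^k c^{-1}\omega_k\,\nu(I_{r_0}(Z))$ for all $\rho<r_0$), so $Z$ is $\H^k$-rectifiable; replacing $\nu$ by $\nu\mres B$ for a ball $B\supseteq I_{r_0}(Z)$ we may assume $\nu$ finite, and then $\nu\ll\H^k$ upgrades to the $\eps$-$\delta$ form, i.e. for every $\eps>0$ there is $\delta(\eps)>0$ with $\H^k(A)<\delta(\eps)\Rightarrow\nu(A)<\eps$. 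Finally, by $\H^k$-rectifiability and inner regularity of the finite measure $\H^k\mres Z$, for every $\eps>0$ and $m$ large there are pairwise disjoint compact sets $Z_1',\dots,Z_m'\subset Z$, each of the form $f_i(W_i)$ with $W_i\subset\R^k$ compact and $f_i\colon\R^k\to\R^d$ Lipschitz, with $\H^k\big(Z\setminus\bigcup_{i=1}^m Z_i'\big)$ as small as we wish and $\sum_{i=1}^m\H^k(Z_i')\le\H^k(Z)$; by Theorem \ref{thm Federer 3229} each $Z_i'$ is Minkowski $k$-regular, so $|U_\eta(Z_i')|=\omega_{d-k}\eta^{d-k}\big(\H^k(Z_i')+o_i(1)\big)$ as $\eta\to0^+$.

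\textbf{Lower bound.} Fix $\eps>0$ and choose $m$ with $\sum_{i=1}^m\H^k(Z_i')\ge\H^k(Z)-\eps$. Being finitely many, pairwise disjoint and compact, the $Z_i'$ lie at mutual distance $\ge d_0>0$, so for $\eta<d_0/2$ the neighborhoods $U_\eta(Z_i')$ are pairwise disjoint and contained in $U_\eta(Z)$; hence $|U_\eta(Z)|\ge\sum_{i=1}^m|U_\eta(Z_i')|=\omega_{d-k}\eta^{d-k}\big(\sum_{i=1}^m\H^k(Z_i')+o(1)\big)$, and letting $\eta\to0^+$ and then $\eps\to0^+$ gives $\LM^k(Z)\ge\H^k(Z)$. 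This half uses only Theorem \ref{thm Federer 3229}, not the density hypothesis.

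\textbf{Upper bound.} Here the density hypothesis is essential. Fix $\eps>0$ and $\delta\in(0,1)$, and choose $m$ so that, setting $\Gamma:=\bigcup_{i=1}^m Z_i'$ (compact) and $Z'':=Z\setminus\Gamma$, one has $\H^k(Z'')<\delta(\eps)$, whence $\nu(Z'')<\eps$. Write $U_\eta(Z)\subset U_{(1+\delta)\eta}(\Gamma)\cup L_\eta$ with $L_\eta:=U_\eta(Z)\setminus U_{(1+\delta)\eta}(\Gamma)$. By Minkowski regularity of the finitely many $Z_i'$,
\[
|U_{(1+\delta)\eta}(\Gamma)|\le\sum_{i=1}^m|U_{(1+\delta)\eta}(Z_i')|\le\omega_{d-k}\big((1+\delta)\eta\big)^{d-k}\big(\H^k(Z)+o(1)\big).
\]
For $L_\eta$: each $y\in L_\eta$ admits $x_y\in Z$ with $|x_y-y|<\eta$, and then $\dist(x_y,\Gamma)>\delta\eta$, so $x_y\in Z''$, $B_{\delta\eta}(x_y)\cap\Gamma=\emptyset$, and (for $\eta<r_0/\delta$) $\nu(B_{\delta\eta}(x_y))\ge c\,(\delta\eta)^k$. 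Covering $L_\eta$ by $\{B_{2\eta}(y)\}_{y\in L_\eta}$ and applying the Besicovitch covering theorem, we extract at most $N(d)$ countable disjoint subfamilies whose union still covers $L_\eta$; within each subfamily the corresponding balls $B_{\delta\eta}(x_y)$ are pairwise disjoint (being contained in the disjoint balls $B_{2\eta}(y)$) and contained in $U_{\delta\eta}(Z)\setminus\Gamma$, so such a subfamily has at most $\nu\big(U_{\delta\eta}(Z)\setminus\Gamma\big)/\big(c\,(\delta\eta)^k\big)$ members. Therefore
\[
|L_\eta|\le C(d)\,c^{-1}\delta^{-k}\,\nu\big(U_{\delta\eta}(Z)\setminus\Gamma\big)\,\eta^{d-k},
\]
and since $U_{\delta\eta}(Z)\setminus\Gamma\downarrow Z''$ as $\eta\to0^+$ while $\nu$ is finite, $\nu(U_{\delta\eta}(Z)\setminus\Gamma)\to\nu(Z'')<\eps$. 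Combining the two estimates, dividing by $\omega_{d-k}\eta^{d-k}$ and taking $\limsup_{\eta\to0^+}$ yields $\UM^k(Z)\le(1+\delta)^{d-k}\H^k(Z)+C(d)\,c^{-1}\omega_{d-k}^{-1}\delta^{-k}\,\eps$; sending first $\eps\to0^+$ and then $\delta\to0^+$ gives $\UM^k(Z)\le\H^k(Z)$, which together with the lower bound proves the theorem.

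\textbf{Main obstacle.} The delicate point is exactly the estimate on $L_\eta$. Without a uniform lower density on $Z$, the bad set $Z''$ — although of arbitrarily small $\H^k$-measure after the approximation — may well be dense in $Z$, and since $U_\eta(Z'')=U_\eta(\overline{Z''})$ its neighborhood could then be comparable to the whole of $U_\eta(Z)$, so that the approximation buys nothing. The density hypothesis, through the Besicovitch packing above, is precisely what forces the volume of the neighborhood of the bad set to be $O(\eps\,\eta^{d-k})$; and the $(1+\delta)\eta$-versus-$\eta$ buffering between $Z$ and $\Gamma$ is what lets one recover the \emph{sharp} constant, by first sending $\eps\to0^+$ with $\delta$ fixed and only then letting $\delta\to0^+$.
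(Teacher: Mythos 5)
Your proof is correct. Note first that the paper does not actually prove this statement: Theorem \ref{thm AFP 2104} is quoted from \cite[Theorem 2.106]{AFP} and used as a black box (indeed the paper's Proposition \ref{prop good case} consists precisely of reducing to it). What you have written is therefore a self-contained proof of the cited result, and its architecture is essentially the one the paper itself uses for the more general Theorem \ref{thm fine}: decompose $Z$, up to small $\H^k$-measure, into finitely many disjoint compact Lipschitz images handled by Kneser's Theorem \ref{thm Federer 3229}; control the tubular neighborhood of the leftover set by a Besicovitch packing argument that converts the lower density bound $\nu(B_{\delta\eta}(x))\ge c(\delta\eta)^k$ into a cardinality bound, and the absolute continuity $\nu\Lt\H^k$ (upgraded to $\eps$--$\delta$ form after restricting $\nu$ to a ball) into smallness of the resulting volume; and use the $(1+\delta)\eta$ buffer between $Z$ and the good set $\Gamma$ so that the two error parameters can be sent to zero in the right order and the sharp constant is recovered. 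The individual steps all check out: the finiteness of $\H^k(Z)$ from the density bound, the disjointness of the balls $B_{\delta\eta}(x_y)$ inside each disjoint Besicovitch subfamily (since $B_{\delta\eta}(x_y)\subset B_{2\eta}(y)$), the inclusion $B_{\delta\eta}(x_y)\subset U_{\delta\eta}(Z)\setminus\Gamma$, and the continuity from above giving $\nu(U_{\delta\eta}(Z)\setminus\Gamma)\to\nu(Z'')<\eps$. The only cosmetic difference from the paper's treatment is that you reprove the lower bound $\LM^k(Z)\ge\H^k(Z)$ from Kneser's theorem, whereas the paper cites it directly for compact rectifiable sets (\cite[Proposition 2.101]{AFP}); both are fine, and your version makes the dependence on Kneser explicit.
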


\begin{remark}
  {\rm For the reader's convenience we observe that Theorem \ref{thm AFP 2104} has the same statement as \cite[Theorem 2.104]{AFP}, although it should be noted that the existence of $\nu$ implies that $\H^k(Z)<\infty$, and thus that $Z$ is $\H^k$-rectifiable. For this reason we shall directly work with $\H^k$-rectifiable sets.}
\end{remark}

We now prove a result that mixes elements of both Theorem \ref{thm Federer 3229} and Theorem \ref{thm AFP 2104}, but that apparently does not follow immediately from them.

\begin{theorem}
  \label{thm fine}
  If $Z$ is a compact and $\H^k$-rectifiable set in $\R^d$ such that
  \begin{equation} \label{ldb}
  \H^k(Z\cap B_r(x))\ge c\,r^k\qquad\forall x\in Z\,,\forall r<r_0\,,
  \end{equation}
  and if $f:\R^d\to\R^d$ is a Lipschitz map, then $f(Z)$ is Minkowski $k$-regular.
\end{theorem}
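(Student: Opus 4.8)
The plan is to reduce the statement to the two known criteria by interpolating between them along the graph of the Lipschitz map. Write $Y = f(Z)$; since $Y$ is a Lipschitz image of a compact $\H^k$-rectifiable set it is itself compact and countably $k$-rectifiable, and by the area formula $\H^k(Y) < \infty$, so $Y$ is $\H^k$-rectifiable. The inequality $\UM^k(Y) \ge \H^k(Y)$ is general and holds for any $\H^k$-rectifiable compact set (this is the easy half, included in \cite[Theorem 2.104]{AFP}), so the entire content is the upper bound $\UM^k(Y) \le \H^k(Y)$.

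The key idea is that neither hypothesis on $Z$ alone passes directly to $Y$: the lower density bound \eqref{ldb} need not survive pushing forward by $f$ (the map can collapse positive-measure pieces of $Z$ onto lower-dimensional sets, destroying the bound near image points where many sheets overlap), and $Y$ need not be globally a Lipschitz image of a subset of $\R^k$ in a way that keeps control of constants. The fix is to split $Z$ according to the behavior of the tangential Jacobian $J^Zf$, which is defined $\H^k$-a.e. on $Z$ since $Z$ is $\H^k$-rectifiable and $f$ is Lipschitz. Fix a small threshold $t>0$ and set
\[
Z_t^+ = \{ x \in Z : J^Zf(x) > t \}, \qquad Z_t^0 = \{ x \in Z : J^Zf(x) \le t \}.
\]
On $Z_t^0$ the area formula gives $\H^k(f(Z_t^0)) \le t\,\H^k(Z)$, so this part contributes at most $t\,\H^k(Z) \cdot (\text{Lip}(f))^{?}$-type error; more usefully, $f(Z_t^0)$ is still a Lipschitz image of a compact subset of (a chart of) $\R^k$ after a standard rectifiability decomposition, and one applies Kneser's theorem (Theorem \ref{thm Federer 3229}) componentwise to conclude $\UM^k(f(Z_t^0)) = \H^k(f(Z_t^0)) \le t\,\H^k(Z)$, which is small. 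On $Z_t^+$, the lower bound on $J^Zf$ combined with \eqref{ldb} should be propagated through $f$ to yield a lower density bound for $f(Z_t^+)$: heuristically, $\H^k(f(Z) \cap B_\rho(f(x))) \gtrsim t\,\H^k(Z \cap B_{c'\rho}(x)) \gtrsim t\,c\,(c'\rho)^k$, at least at points where $f$ is approximately differentiable with good control. Once a lower density estimate of the form $\H^k(f(Z_t^+) \cap B_\rho(y)) \ge c_t\,\rho^k$ holds for all $y$ in (a suitable subset of) $f(Z_t^+)$ and all small $\rho$, Theorem \ref{thm AFP 2104} applies and gives $\UM^k(f(Z_t^+)) = \H^k(f(Z_t^+))$. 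Using subadditivity of $\UM^k$, $\UM^k(Y) \le \UM^k(f(Z_t^+)) + \UM^k(f(Z_t^0)) \le \H^k(Y) + C\,t\,\H^k(Z)$, and letting $t \to 0^+$ finishes the proof.

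The main obstacle is making the lower density estimate for $f(Z_t^+)$ honest. The naive chain of inequalities above glosses over two genuine difficulties: first, approximate differentiability of $f$ along $Z$ is only an $\H^k$-a.e. statement, so the bound $\H^k(f(Z) \cap B_\rho(f(x))) \gtrsim \rho^k$ may fail on an $\H^k$-null (but topologically dense) subset of $f(Z_t^+)$ — however, Theorem \ref{thm AFP 2104} as stated already allows the lower bound to be phrased via an auxiliary Radon measure $\nu \ll \H^k$ rather than pointwise on all of $Z$, so one takes $\nu = \H^k \mres f(Z_t^+)$ (or its pushforward-adjusted variant) and verifies the measure-theoretic lower bound, which is robust under neglecting null sets. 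Second, and more seriously, the constant $c'$ relating balls around $x$ to balls around $f(x)$ must be controlled uniformly, which requires that on $Z_t^+$ the map $f$ be, say, bi-Lipschitz on small scales onto its image — this is not automatic and is handled by a further Lusin-type decomposition of $Z_t^+$ into countably many pieces on each of which $f$ restricts to a bi-Lipschitz homeomorphism (a standard consequence of Federer's theory, cf. \cite[3.2.29]{FedererBOOK} or the arguments behind \cite[Theorem 2.104]{AFP}), applying Theorem \ref{thm AFP 2104} to each piece and summing. Assembling these decompositions carefully, so that the error terms telescope and the countably many good pieces exhaust $Y$ up to $\H^k$-null sets, is the technical heart of the argument; the rest is bookkeeping with subadditivity of upper Minkowski content and the area formula.
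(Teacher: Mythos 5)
Your high-level plan (split $Z$ by the size of the tangential Jacobian, use Kneser on the good part and the density criterion on the rest) points in the right direction, but the proposal has a genuine gap at its core: you repeatedly control the \emph{Hausdorff measure} of the bad pieces and then treat that as control on their \emph{upper Minkowski content}. These are not interchangeable. A compact, countably $k$-rectifiable set with $\H^k=0$ can have strictly positive (even infinite) upper $k$-dimensional Minkowski content (e.g.\ $\{0\}\cup\{n^{-1}e_1\}_n\cup\{n^{-1}e_2\}_n\subset\R^2$ is $\H^1$-null but has $\UM^1>0$), and $\UM^k$ is not countably subadditive. So the step ``decompose $f(Z_t^0)$ into countably many Lipschitz images of subsets of $\R^k$, apply Kneser componentwise, and conclude $\UM^k(f(Z_t^0))\le\H^k(f(Z_t^0))\le t\,\H^k(Z)$'' does not follow: summing Minkowski contents over a countable decomposition is illegitimate, and the $\H^k$-null remainder of the decomposition is exactly where all the Minkowski content could hide. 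The same issue reappears in your final assembly, where the leftover after finitely many good pieces is dismissed because its $\H^k$ measure is small. Controlling the $\eta$-neighborhood of that leftover is the actual difficulty of the theorem; the paper does it by covering the leftover with balls of radius $\sim\l\eta$ via Besicovitch and bounding their \emph{number} by $C\,\s\,(\l\eta)^{-k}$, using the hypothesis \eqref{ldb} pulled back through $f$ (since $f^{-1}(B_r(y))\supset B_{r/\Lip f}(x)$) together with a splitting of $\H^k(Z\cap f^{-1}(S^*))$ into an absolutely continuous part $f_\sharp[\H^k\mres(Z\cap F)]$ and a small remainder.

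There is a second, related tension in your treatment of $Z_t^+$ that the proposal does not resolve. To invoke Theorem \ref{thm AFP 2104} you need a single measure $\nu\ll\H^k$ satisfying $\nu(B_r(y))\ge c\,r^k$ at points of $f(Z_t^+)$. Taking $\nu=f_\sharp(\H^k\mres Z)$ gives the density bound for free (no bi-Lipschitz control is needed --- this is where your proposed ``Lusin-type bi-Lipschitz decomposition'' attacks the wrong difficulty), but it is absolutely continuous only if $J^Zf>0$ a.e.\ on \emph{all} of $Z$; taking $\nu=f_\sharp(\H^k\mres Z_t^+)$ restores absolute continuity but destroys the density bound, since \eqref{ldb} is a hypothesis on $Z$ and is not inherited by subsets. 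The paper circumvents this by applying Theorem \ref{thm AFP 2104} only in the everywhere-nondegenerate situation (Proposition \ref{prop good case}), and handles the set $\{J^Zf=0\}$ by a separate device you would need some version of: lift $f$ to the injective map $f_\e(x)=(f(x),\e x)$, whose tangential Jacobian is everywhere positive, apply the nondegenerate case in $\R^d\times\R^d$, and transfer the resulting vanishing of Minkowski content back down by Fubini (Proposition \ref{prop higher codim zeor jac}). Without an argument of this kind for the degenerate part, and without the ball-counting control of the remainder, the proof does not close.
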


We present a proof of Theorem \ref{thm fine} which follows the argument used in \cite{AFP} to prove Theorem \ref{thm Federer 3229}. We premise two propositions to the main argument.

\begin{proposition}
  \label{prop good case}
  If $Z$ is a compact and $\H^k$-rectifiable set in $\R^d$ such that
  \[
  \H^k(Z\cap B_r(x))\ge c\,r^k\qquad\forall x\in Z\,,\forall r<r_0\,,
  \]
  and if $f:\R^d\to\R^d$ is a Lipschitz map with $J^Z f>0$ a.e. on $Z$, then $f(Z')$ is Minkowski $k$-regular for any compact subset $Z' \subset Z$.
\end{proposition}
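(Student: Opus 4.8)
The plan is to deduce Proposition~\ref{prop good case} directly from Theorem~\ref{thm AFP 2104}: the whole point is to produce, on the image $f(Z')$, a Radon measure which carries the required lower density bound and is, at the same time, absolutely continuous with respect to $\H^k$. The natural candidate is the push-forward
\[
\nu:=f_\sharp\big(\H^k\mres Z\big)\,,
\]
a finite Radon measure on $\R^d$, since $Z$ is compact and $\H^k$-rectifiable (so that $\H^k(Z)<\infty$). We may assume $Z\ne\emptyset$ and $\H^k(Z)>0$ --- otherwise the statement is trivial --- so that $f$ is non-constant and $L:=\Lip f\in(0,\infty)$.

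The first property to check is the lower density bound for $\nu$. Given $y\in f(Z)$, pick $x\in Z$ with $f(x)=y$; since $B_{r/L}(x)\subset f^{-1}(B_r(y))$ by the Lipschitz bound, we obtain
\[
\nu(B_r(y))=\H^k\big(Z\cap f^{-1}(B_r(y))\big)\ge\H^k\big(Z\cap B_{r/L}(x)\big)\ge c\,(r/L)^k\qquad\text{whenever }r<L\,r_0\,,
\]
so that $\nu(B_r(y))\ge(c\,L^{-k})\,r^k$ for every $y\in f(Z)$ --- in particular for every $y\in f(Z')$ --- and every $r<L\,r_0$. The second property is $\nu\Lt\H^k$: let $B\subset\R^d$ be Borel with $\H^k(B)=0$ and set $A:=Z\cap f^{-1}(B)$; then $f(A)\subset B$, so $\H^k(f(A))=0$, and the area formula for the Lipschitz map $f$ along the rectifiable set $Z$ gives
\[
\int_A J^Zf\,d\H^k=\int_{f(A)}\H^0\big(A\cap f^{-1}(y)\big)\,d\H^k(y)=0\,.
\]
Since $J^Zf\ge 0$ and $\{J^Zf=0\}$ is $\H^k$-negligible in $Z$ by hypothesis, this forces $\H^k(A)=0$, i.e.\ $\nu(B)=0$.

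To conclude, $f(Z')$ is compact (continuous image of a compact set) and countably $k$-rectifiable (Lipschitz image of the countably $k$-rectifiable set $Z'\subset Z$); combining this with the two properties of $\nu$ established above, Theorem~\ref{thm AFP 2104}, applied with $\nu$ playing the role of the abstract measure there, yields that $f(Z')$ is Minkowski $k$-regular. I expect the absolute continuity $\nu\Lt\H^k$ to be the only genuinely delicate point: it is the unique place where the assumption $J^Zf>0$ a.e.\ is used, and it rests on a correct application of the area formula on rectifiable sets; the remaining steps are routine bookkeeping. A pleasant byproduct of the argument is that it treats all compact $Z'\subset Z$ uniformly --- the lower density estimate for $\nu$ holds on the whole of $f(Z)$ --- which is presumably what makes Proposition~\ref{prop good case} in this precise form convenient for the proof of Theorem~\ref{thm fine}, where the locus $\{J^Zf>0\}$ must be approached through its compact subsets.
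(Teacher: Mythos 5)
Your proof is correct and follows essentially the same route as the paper's: both define $\nu=f_\sharp(\H^k\mres Z)$, verify the lower density bound by pulling balls back through $f$, and establish $\nu\Lt\H^k$ via the area formula on the rectifiable set $Z$ together with the positivity of $J^Zf$, before invoking Theorem~\ref{thm AFP 2104}. The only cosmetic difference is that the paper phrases the absolute continuity step as a coarea-type identity $\nu(E)=\int_{E\cap f(Z)}\big(\int_{f^{-1}(y)\cap Z}(J^Zf)^{-1}\,d\H^0\big)\,d\H^k(y)=0$, whereas you apply the area formula to $\int_A J^Zf\,d\H^k$ and deduce $\H^k(A)=0$; these are equivalent.
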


\begin{proof}
  Let
  \[
  \nu=f_\sharp\,(\H^k\mres Z)\,.
  \]
  If $y=f(x)\in f(Z)$ and $r\le \Lip(f)\,r_0$, then
  \[
  \nu(B_r(y))=\H^k(Z\cap f^{-1}(B_r(y)))\ge\H^k(Z\cap B_{r/\Lip(f)}(x))\ge \frac{c}{(\Lip (f))^k}\,r^k\,.
  \]
  Moreover $\nu\Lt \H^k$, since if $E\subset\R^d$ with $\H^k(E)=0$, then by $J^Z f>0$ on $Z$ we get
  \[
  \nu(E)=\H^k(Z\cap f^{-1}(E))=\int_{Z\cap f^{-1}(E)}\frac{J^Z f}{J^Z f}\,d\H^k=\int_{E\cap f(Z)}d\H^k(y)\int_{f^{-1}(y)\cap Z}\frac{d\H^0}{J^Zf}=0\,.
  \]
  We can thus apply Theorem \ref{thm AFP 2104} to $f(Z')$ for every $Z'\subset Z$ compact.
\end{proof}

\begin{proposition}
  \label{prop higher codim zeor jac}
  If $Z$ is a compact and $\H^k$-rectifiable set in $\R^d$ such that
  \[
  \H^k(Z\cap B_r(x))\ge c\,r^k\qquad\forall x\in Z\,,\forall r<r_0\,,
  \]
  and if $f:\R^d\to\R^d$ is a Lipschitz map, then
  \[
  \M^k(f(Z'))=0
  \]
  whenever $Z'\subset Z$ is compact with $J^Z f=0$ $\H^k$-a.e. on $Z'$.
\end{proposition}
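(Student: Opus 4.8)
The plan is to reproduce, in the rectifiable setting, the device by which \cite{AFP} disposes of the degenerate part in the proof of Kneser's theorem: one perturbs $f$ into a Lipschitz map whose tangential Jacobian is uniformly positive but still of size $O(\e)$ on $Z'$, applies the non-degenerate result of Proposition \ref{prop good case} to the perturbation, and then projects back, losing only a factor $O(\e)$; finally one lets $\e\to 0^+$. Concretely, for $\e\in(0,1]$ I would consider $f_\e\colon\R^d\to\R^d\times\R^d=\R^{2d}$, $f_\e(x)=(f(x),\e\,x)$, which is Lipschitz and proper. Wherever the tangential differential $\nabla^Z f_x$ exists (hence $\H^k$-a.e.\ on $Z$) so does $\nabla^Z(f_\e)_x$, and computing Gram determinants over an orthonormal basis of $T_xZ$ one gets $J^Z f_\e(x)=\sqrt{\det\big((\nabla^Z f_x)^*(\nabla^Z f_x)+\e^2\,I_k\big)}$. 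Since the eigenvalues of $(\nabla^Z f_x)^*(\nabla^Z f_x)$ lie in $[0,\Lip(f)^2]$, this yields $J^Z f_\e\ge\e^k>0$ at $\H^k$-a.e.\ point of $Z$, while at $\H^k$-a.e.\ $x\in Z'$ one of those eigenvalues vanishes (this being exactly $J^Z f(x)=0$), whence $J^Z f_\e(x)\le C_0\,\e$ with $C_0=(1+\Lip(f)^2)^{(k-1)/2}$.

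Next, since $J^Z f_\e>0$ $\H^k$-a.e.\ on $Z$, the argument proving Proposition \ref{prop good case} applies to $f_\e$ with $\R^{2d}$ replacing $\R^d$ as the target (harmless, since Theorem \ref{thm AFP 2104} holds in every Euclidean space): the push-forward $(f_\e)_\sharp(\H^k\mres Z)$ is absolutely continuous with respect to $\H^k$ by the area formula, and satisfies $(f_\e)_\sharp(\H^k\mres Z)(B_r(f_\e(x)))\ge\H^k(Z\cap B_{r/\Lip(f_\e)}(x))\ge c\,\Lip(f_\e)^{-k}r^k$ at every $f_\e(x)\in f_\e(Z')$ for small $r$; hence Theorem \ref{thm AFP 2104} shows that $f_\e(Z')$ is Minkowski $k$-regular in $\R^{2d}$. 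Combining this with the area formula and the bound above,
\[
\M^k\big(f_\e(Z')\big)=\H^k\big(f_\e(Z')\big)\le\int_{Z'}J^Z f_\e\,d\H^k\le C_0\,\e\,\H^k(Z')<\infty\,,
\]
where finiteness uses $\H^k(Z')\le\H^k(Z)<\infty$.

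Finally, let $P\colon\R^{2d}\to\R^d$ be the projection onto the first factor, so that $f=P\circ f_\e$ and $f(Z')=P(S)$ with $S:=f_\e(Z')$. The delicate point — and the main obstacle — is that the smallness obtained above lives in $\R^{2d}$ and cannot be transported to $P(S)$ by comparing Lebesgue measures, since $P$ can increase measure dramatically (a ``vertical'' $k$-plane is $\L^{2d}$-null but has full-dimensional projection); one must instead route the estimate through covering numbers. Writing $M:=C_0\,\e\,\H^k(Z')$, the previous step gives $\L^{2d}(U_\delta(S))\le 2M\,\om_{2d-k}\,\delta^{2d-k}$ for all small $\delta$, and a standard packing argument (a maximal $\eta$-separated subset of $S$ has disjoint radius-$\eta/2$ balls inside $U_{\eta/2}(S)$, and radius-$\eta$ balls covering $S$) bounds by $C_1\,M\,\eta^{-k}$ the number of $\eta$-balls needed to cover $S$, for all small $\eta$. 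Projecting those balls covers $f(Z')$ by the same number of $\eta$-balls in $\R^d$, so $\L^d\big(U_\eta(f(Z'))\big)\le C_2\,M\,\eta^{d-k}$ for all small $\eta$, and hence $\UM^k(f(Z'))\le C_3\,M=C_3\,C_0\,\e\,\H^k(Z')$. Since $\UM^k(f(Z'))$ does not depend on $\e$ while $C_3C_0$ does not either and $\H^k(Z')<\infty$, letting $\e\to 0^+$ forces $\UM^k(f(Z'))=0$, and a fortiori $\M^k(f(Z'))=0$. Everything except this last descent is a routine adaptation of the non-degenerate case already treated in Proposition \ref{prop good case}.
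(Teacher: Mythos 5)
Your proof is correct. The core of your argument is exactly the paper's: the graph perturbation $f_\e(x)=(f(x),\e\,x)$, the observation that $J^Zf_\e>0$ everywhere while $J^Zf_\e={\rm O}(\e)$ where $J^Zf=0$, the application of Proposition \ref{prop good case} with target $\R^{2d}$, and the area formula giving $\M^k(f_\e(Z'))=\H^k(f_\e(Z'))\le C_0\,\e\,\H^k(Z')$ (the paper phrases this limit qualitatively via dominated convergence rather than with your explicit eigenvalue bound, but it is the same computation). Where you diverge is the descent from $\R^{2d}$ back to $\R^d$: you route it through covering numbers (a packing bound on $S=f_\e(Z')$ from the volume of its tube, then projection of the covering balls), whereas the paper stays with Lebesgue measures and uses Fubini, noting that $\bigcup_{x\in Z'}B_\eta(f(x))\times B_\eta(\e x)\subset U_{2\eta}(f_\e(Z'))$ and that the fiber of this union over each $y\in U_\eta(f(Z'))$ contains a full $d$-ball of radius $\eta$, whence $|U_\eta(f(Z'))|\,\om_d\,\eta^d\le|U_{2\eta}(f_\e(Z'))|$. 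Both descents are valid and yield $\UM^k(f(Z'))\le C(d,k)\,\M^k(f_\e(Z'))$; your covering argument is slightly longer but equally elementary. Your only inaccuracy is the claim that the estimate \emph{cannot} be transported by comparing Lebesgue measures and that one \emph{must} use covering numbers: the paper's Fubini trick shows otherwise, precisely because the second component $x\mapsto\e\,x$ is injective and forces the tube in $\R^{2d}$ to be ``thick'' in the vertical directions over every point of $U_\eta(f(Z'))$. This does not affect the correctness of what you wrote.
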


\begin{proof}
  Let us define $f_\e:\R^d\to\R^d\times\R^d$ by setting
  \[
  f_\e(x)=\big(f(x),\e\,x\big)\,.
  \]
  If $x\in Z$ is such that $f$ is tangentially differentiable at $x$ along $Z$, then $f_\e$ is tangentially differentiable at $x$ along $Z$, and thus
  \[
  J^Z f_\e(x)>0\,,
  \]
  with $J^Z f_\e(x)\to J^Z f(x)$ as $\e\to 0^+$ and $J^Z f_\e\le(\Lip (f))^k+1$ for $\e<\e_0$. By Proposition \ref{prop good case} and the area formula, since $f_\e$ is injective we get
  \[
  \M^k(f_\e(Z'))=\H^k(f_\e(Z'))=\int_{Z'} J^Z f_\e\to 0\qquad\mbox{as $\e\to 0^+$}\,,
  \]
  where in computing the limit we have used $J^Z f=0$ $\H^k$-a.e. on $Z'$. Thus,
  \[
  \lim_{\e\to 0^+}
  \M^k(f_\e(Z'))=0\,.
  \]
  At the same time if $\eta>0$, then
  \[
  \Big\{(y,z)\in\R^d\times\R^d:y\in B_\eta(f(x))\,, z\in B_\eta(\e\,x)\,, x\in Z'\Big\}\subset U_{2\,\eta}(f_\e(Z'))\,,
  \]
  so that Fubini's theorem gives
  \[
  |U_\eta(f(Z'))|\,\om_d\,\eta^d\le |U_{2\eta}(f_\e(Z'))|\,.
  \]
  Dividing by $\eta^{2d-k}$ we get
  \[
  \UM^k(f(Z'))\,\le C(d,k)\,\limsup_{\eta\to 0^+}\frac{|U_{2\eta}(f_\e(Z'))|}{\eta^{2d-k}}\le C(d,k)\,\M^k(f_\e(Z'))\,,
  \]
  and letting $\e\to 0^+$ we conclude the proof.
\end{proof}

\begin{proof}[Proof of Theorem \ref{thm fine}] For brevity, set $S = f(Z)$. The rectifiability of $S$ gives $\LM^k(S)\ge\H^k(S)$, see e.g. \cite[Proposition 2.101]{AFP}, so that we only need to prove $\UM^k(S)\le\H^k(S)$. We set $F=\{J^Z f>0\}$ (that is $f$ is tangentially differentiable along $Z$ with positive tangential Jacobian on $F$) and pick $Z_0\subset \{J^Z f=0\}\subset Z\setminus F$ compact with the property that
\begin{equation} \label{error1}
\H^k\big(Z\setminus (F\cup Z_0)\big)<\s\,,
\end{equation}
for some $\s>0$. In this way, by Proposition \ref{prop higher codim zeor jac}, we have
\begin{equation} \label{zero piece}
\M^k(S_0)=0\qquad\mbox{where $S_0=f(Z_0)$}\,.
\end{equation}
Since $S$ is compact and $\H^k$-rectifiable we can find a countable disjoint family $\{S_i\}_i$ of compact subsets of $S$, which covers $S$ modulo $\H^k$, and such that $S_i=f_i(Z_i)$ for injective Lipschitz maps $f_i$ with uniformly positive Jacobian on $\R^k$. By Theorem \ref{thm Federer 3229},
\begin{equation} \label{good pieces}
\M^k(S_i)=\H^k(S_i) \qquad \mbox{for every $i$}\,.
\end{equation}
We pick $N$ so that
\begin{equation}
  \label{Hk delta}
  \H^k\Big(S\setminus \bigcup_{i=1}^N S_i\Big)<\de\,,
\end{equation}
for $\de$ to be chosen depending on $\s$, and set
\begin{equation} \label{error2}
S^*=S\setminus \Big(S_0\cup\bigcup_{i=1}^N S_i\Big)\,.
\end{equation}
Next, we further distinguish points in $S^*$ depending on their distance from $\bigcup_{i=0}^N S_i$. More precisely, for any arbitrary $\lambda \in \left( 0,1 \right)$ we define the compact set
\begin{equation} \label{to cover}
S^{**}=S\setminus U_{\l\eta}\Big(\bigcup_{i=0}^N S_i\Big)\,,
\end{equation}
and then we apply the Besicovitch covering theorem to cover
\begin{equation} \label{covering}
S^{**} \subset \bigcup_{j \in J} B_{\l\,\eta}(y_j)\,,
\end{equation}
where $J$ is a finite set of indexes, each $y_j \in S^{**}$, and each point of $S^{**}$ belongs to at most $\xi(d)$ distinct balls in the covering. Notice that
\begin{eqnarray*}
  \bigcup_{j\in J}B_{\l\,\eta}(y_j)&\subset&U_{\l\,\eta}(S)\setminus\bigcup_{i=0}^N S_i\,,
  \\
  S\cap\bigcup_{j\in J}B_{\l\,\eta}(y_j)&\subset& S^*\,.
\end{eqnarray*}
Furthermore, $B_{\l\,\eta/\Lip\,f}(x_j)\subset f^{-1}(B_{\l\,\eta}(y_j))$ for some $x_j \in Z$ such that $y_j=f(x_j)$. The lower density bound \eqref{ldb} then yields
\begin{eqnarray*}
  \#(J)\,\frac{c\,(\l\,\eta)^k}{(\Lip f)^k}&\le&\sum_{j\in J}\H^k\Big(Z\cap B_{\l\,\eta/\Lip f}(x_j)\Big)
  \\
  &\le&\sum_{j\in J}\H^k\Big(Z\cap f^{-1}(B_{\l\eta}(y_j))\Big)
  \\
  &\le&\xi(d)\,\H^k\Big(Z\cap f^{-1}\Big(\bigcup_{j\in J}B_{\l\eta}(y_j)\Big)\Big)
    \\
  &\le&\xi(d)\,\H^k(Z\cap f^{-1}(S^*))\,.
\end{eqnarray*}
However,
\begin{eqnarray*}
  \H^k(Z\cap f^{-1}(S^*))&=&\H^k(Z\cap f^{-1}(S^*)\cap F)+\H^k(Z\cap f^{-1}(S^*)\setminus F)
  \\
  &\le&
  \nu(S^*)+\H^k(Z\setminus(F\cup Z_0))\le  \nu(S^*)+\s\,,
\end{eqnarray*}
provided we set
\[
\nu=f_\sharp[\H^k\mres (Z\cap F)]\,.
\]
Since $J^Z f > 0$ on $F$, we have, for any Borel set $A \subset \R^d$
\[
\nu(A)=\int_{Z\cap F\cap f^{-1}(A)}\frac{J^Z f}{J^Z f}=\int_{A}d\H^k(y)\int_{f^{-1}(y)\cap Z\cap F}\frac{d\H^0}{J^Zf}
\]
so that $\nu\Lt\H^k$. Therefore a suitable choice of $\de=\de(\s)$ in \eqref{Hk delta} gives
\[
\nu(S^*)<\s\,,
\]
and we have thus proved that
\begin{equation} \label{number of balls}
\#(J) \leq C(d,c,\Lip(f))\, \s\, \lambda^{-k} \, \eta^{-k}\,.
\end{equation}
We can now conclude the argument. From the definition of $S^*$ it follows that
\begin{equation}
S = \bigcup_{i=0}^N S_i \cup S^*\,,
\end{equation}
and thus that
\begin{equation} \label{tb_nbd}
U_\eta(S) \subset \bigcup_{i=0}^N U_\eta(S_i) \cup U_\eta(S^*) \qquad \mbox{for all $\eta > 0$}\,.
\end{equation}
On the other hand, by the definition of $S^{**}$
\begin{equation} \label{key}
U_\eta(S^*) \subset U_\eta(S^{**}) \cup \bigcup_{i=0}^N U_{(1+\lambda)\,\eta} (S_i)\,,
\end{equation}
which, together with \eqref{covering},  gives
\begin{equation} \label{final_tb_nbd}
U_\eta(S) \subset \bigcup_{i=0}^N U_{(1+\l)\,\eta}(S_i) \cup \bigcup_{j \in J} B_{(1+\l)\,\eta}(y_j)\,.
\end{equation}
By means of \eqref{number of balls} we achieve
\begin{equation}
|U_\eta(S)| \leq \sum_{i=0}^N |U_{(1+\l)\,\eta}(S_i)| + C(d,c,\Lip(f)) \, \s \, \lambda^{-k}\, (1+\lambda)^d \, \eta^{d-k}\,,
\end{equation}
so that, dividing by $\omega_{d-k}\,\eta^{d-k}$, taking the limit as $\eta \to 0^+$, and using \eqref{zero piece} and \eqref{good pieces} we obtain
\begin{equation} \label{final}
\begin{split}
\UM^k (S) &\leq (1+\l)^{d-k} \sum_{i=1}^N  \H^k(S_i) + C(d,k,c,\Lip(f)) \, \s \, \lambda^{-k} \, (1+\lambda)^{d}\\
&\leq (1+\lambda)^{d-k} \, \H^k(S) + C(d,k,c,\Lip(f)) \, \s \, \lambda^{-k} \, (1+\lambda)^{d}\,.
\end{split}
\end{equation}
The conclusion follows by letting first $\s \to 0^+$ and then $\l \to 0^+$.
\end{proof}

We close this section by proving a useful localization statement.

\begin{proposition}[Localization of Minkowski content] \label{prop_localization}
   If $Z$ is a compact and $\H^k$-rectifiable set in $\R^d$ such that
  \[
  \M^k(Z)=\H^k(Z)\,,
  \]
  then
  \[
  \lim_{\eta\to 0^+}\frac{|U_\eta(Z)\cap E|}{\om_{d-k}\eta^{d-k}}=\H^k(Z\cap E)
  \]
  whenever $E$ is a Borel set with $\H^k(K\cap\pa E)=0$.
\end{proposition}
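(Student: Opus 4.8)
The plan is to reinterpret the statement as a Portmanteau-type convergence $\mu_\eta\weakstar\H^k\mres Z$, where $\mu_\eta$ is the Radon measure on $\R^d$ defined by $\mu_\eta(A)=|U_\eta(Z)\cap A|/(\omega_{d-k}\,\eta^{d-k})$ for Borel $A$, and then to sandwich $\mu_\eta(E)$ between its values on $\mathring E$ and $\overline E$. Concretely, I will prove the two one-sided estimates
\[
\liminf_{\eta\to0^+}\mu_\eta(A)\ge\H^k(Z\cap A)\ \text{ for every open }A,\qquad \limsup_{\eta\to0^+}\mu_\eta(C)\le\H^k(Z\cap C)\ \text{ for every closed }C.
\]
Granting these, for Borel $E$ with $\H^k(Z\cap\pa E)=0$ one has $\mu_\eta(\mathring E)\le\mu_\eta(E)\le\mu_\eta(\overline E)$, hence
\[
\H^k(Z\cap\mathring E)\le\liminf_\eta\mu_\eta(E)\le\limsup_\eta\mu_\eta(E)\le\H^k(Z\cap\overline E);
\]
since $\H^k\big(Z\cap(\overline E\setminus\mathring E)\big)\le\H^k(Z\cap\pa E)=0$, the inner and outer terms both equal $\H^k(Z\cap E)$, giving the claim. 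Note $\H^k(Z)<\infty$ because $Z$ is compact and $\H^k$-rectifiable, so $\H^k\mres Z$ is a finite Borel, hence Radon, hence inner regular measure on $\R^d$; this regularity is used below.

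For the open lower bound, fix an open set $A$ and a compact $C\subseteq A$. Then $Z\cap C$ is compact and $\H^k$-rectifiable, and for $\eta<\dist(C,\R^d\setminus A)$ we have $U_\eta(Z\cap C)\subseteq U_\eta(Z)\cap U_\eta(C)\subseteq U_\eta(Z)\cap A$, so $\mu_\eta(A)\ge(\omega_{d-k}\,\eta^{d-k})^{-1}\,|U_\eta(Z\cap C)|$ and therefore $\liminf_\eta\mu_\eta(A)\ge\LM^k(Z\cap C)$. By rectifiability $\LM^k(Z\cap C)\ge\H^k(Z\cap C)$ (exactly as used at the start of the proof of Theorem \ref{thm fine}, see \cite[Proposition 2.101]{AFP}); taking the supremum over compact $C\subseteq A$ and invoking inner regularity of $\H^k\mres Z$ yields $\liminf_\eta\mu_\eta(A)\ge\H^k(Z\cap A)$.

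For the closed upper bound I will exploit the global Minkowski regularity hypothesis $\M^k(Z)=\H^k(Z)$, i.e. $\mu_\eta(\R^d)\to\H^k(Z)$, together with complementation. Fix $R$ with $I_1(Z)\subseteq B_R(0)$, so that $U_\eta(Z)\subseteq B_R(0)$ for all $\eta\le1$. Given a closed set $C$, put $A=B_R(0)\setminus C$, which is open; for $\eta\le1$ the sets $U_\eta(Z)\cap C$ and $U_\eta(Z)\cap A$ partition $U_\eta(Z)$, hence $\mu_\eta(\R^d)=\mu_\eta(C)+\mu_\eta(A)$, and so $\limsup_\eta\mu_\eta(C)\le\lim_\eta\mu_\eta(\R^d)-\liminf_\eta\mu_\eta(A)\le\H^k(Z)-\H^k(Z\cap A)$ by the open lower bound. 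Since $Z\subseteq B_R(0)$, $\H^k(Z\cap A)=\H^k(Z\setminus C)$, whence $\limsup_\eta\mu_\eta(C)\le\H^k(Z)-\H^k(Z\setminus C)=\H^k(Z\cap C)$, as required; together with the sandwich above this finishes the proof.

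The proof involves no hard estimate; the only point to handle carefully is the interplay between the two bounds. The lower bound on open sets is essentially free from rectifiability, but the matching upper bound on closed sets holds \emph{only because no Minkowski content is lost globally} — which is precisely what the hypothesis $\M^k(Z)=\H^k(Z)$ provides — and the localization to a fixed ball $B_R(0)$ is what converts that global fact into the usable complementation identity $\mu_\eta(\R^d)=\mu_\eta(C)+\mu_\eta(A)$.
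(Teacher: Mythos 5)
Your proof is correct and follows essentially the same route as the paper's: a Portmanteau-type argument with a lower bound $\liminf_\eta\mu_\eta(A)\ge\H^k(Z\cap A)$ on open sets (you via inner regularity of $\H^k\mres Z$, the paper via the shrunken sets $A_{\eta_0}$) and a matching upper bound on closed sets obtained by complementation from the global identity $\mu_\eta(\R^d)\to\H^k(Z)$. The only difference is cosmetic: you spell out the final continuity-set sandwich that the paper delegates to a ``standard criterion for weak-star convergence.''
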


\begin{proof}
  If we set
  \[
  \mu_\eta=\frac{\L^d\mres U_\eta(Z)}{\omega_{d-k}\eta^{d-k}}\,,\qquad \mu=\H^k\mres Z\,,
  \]
  then we just need to prove that, as $\eta\to 0^+$, $\mu_\eta\weakstar \mu$ in $\R^d$. To this end, we first consider an open set $A$, set $A_\eta=\{x\in A:\dist(x,\pa A)\ge\eta\}$, and notice that, for $\eta<\eta_0$,
  \begin{eqnarray*}
    \mu_\eta(A)&=&\frac{|U_\eta(Z)\cap A|}{\om_{d-k}\eta^{d-k}}\ge
    \frac{|U_\eta(Z\cap A_\eta)|}{\om_{d-k}\eta^{d-k}}
    \\
    &\ge&
    \frac{|U_\eta(Z\cap A_{\eta_0})|}{\om_{d-k}\eta^{d-k}}
  \end{eqnarray*}
  so that, by \cite[Proposition 2.101]{AFP} and since $Z\cap A_{\eta_0}$ is compact and $\H^k$-rectifiable
  \[
  \liminf_{\eta\to 0^+}\mu_\eta(A)\ge\H^k(Z\cap A_{\eta_0})\,.
  \]
  Letting $\eta_0\to 0^+$ we get
  \begin{equation}
    \label{prop loc lsc}
      \liminf_{\eta\to 0^+}\mu_\eta(A)\ge\mu(A)\qquad\mbox{$\forall A\subset\R^d$ open}\,.
  \end{equation}
  Since $\M^k(Z)=\H^k(Z)$ means that $\mu_\eta(\R^d)\to\mu(\R^d)$ as $\eta\to 0^+$, we find that, for every compact set $H\subset\R^d$,  \begin{eqnarray} \label{prop loc usc}
    \mu(H)=\mu(\R^d)-\mu(A)\ge\lim_{\eta\to 0^+}\mu_\eta(\R^d)-\liminf_{\eta\to 0^+}\mu_\eta(A)
    \ge\limsup_{\eta\to 0^+}\mu_\eta(H)\,,
  \end{eqnarray}
  where we have used \eqref{prop loc lsc} with $A=\R^d\setminus H$. By a standard criterion for weak-star convergence of Radon measures, \eqref{prop loc lsc} and \eqref{prop loc usc} imply that $\mu_\eta\weakstar \mu$ in $\R^d$ as $\eta\to 0^+$.
\end{proof}

\section{Uniform lower density estimates} \label{s:lde} The application of Theorem \ref{thm fine} to $K$ (where $(K,E)$ is a generalized minimizer of $\psi(\e)$), requires proving uniform lower density estimates for $\cl(K)$ (recall that $K$ is compact relatively to $\Om$, not to $\R^{n+1}$). Now, it is a consequence of the analysis carried out in \cite{kms} that there exists a radius $r_* > 0$ such that
\begin{equation} \label{interior_dlb}
\H^n(K\cap B_r(x))\ge \om_n\,r^n\,,\qquad\forall x\in K\,, r < r_*\,,B_r(x)\cc\Om\,.
\end{equation}
However, the lower density estimate in \eqref{interior_dlb} is not sufficient to apply Theorem \ref{thm fine}, because its radius of validity degenerates as $x$ approaches $\cl (K) \setminus K = \cl (K) \cap \pa \Om$. We thus need an improvement, which is provided in the following theorem.

\begin{theorem}[Uniform lower density estimates]
  \label{thm boundary density estimates}
  If $(K,E)$ is a generalized minimizer of $\psi(\e)=\psi(\e,W,\C)$, then there exist $c=c(n)>0$ and $r_0=r_0(n,W,|\l|)>0$ such that
  \begin{equation} \label{e:uniform_lde}
  \H^n(K\cap B_r(x))\ge c\,r^n\qquad\forall x\in\cl(K)\,, \forall r<r_{0}\,.
  \end{equation}
  Here $\l$ is the Lagrange multiplier of $(K,E)$, as introduced in Theorem \ref{thm basic regularity}-(i).
\end{theorem}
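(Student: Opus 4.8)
The plan is to extend the interior estimate \eqref{interior_dlb}, whose radius of validity degenerates at $\pa W=\pa\Om$, across the wire frame by a reflection argument. Since $K\subset\Om$ is relatively closed and $\Om$ is open, $K$ never meets $\pa W$, so the only difficult points are those of $\cl(K)\cap\pa W$, while for $y\in\cl(K)$ with $\dist(y,W)$ bounded below the estimate \eqref{interior_dlb} already gives the conclusion. The key structural fact is that, by the stationarity identity \eqref{stationary main}, the integral varifold $V=\var(\pa^*E,1)+\var(K\setminus\pa^*E,2)=\var(K,\theta)$ has generalized mean curvature $\vec H=\l\,\nu_E$ on $\pa^*E$ and $\vec H=0$ on $K\setminus\pa^*E$, so $|\vec H|\le|\l|$ on $K$; and, crucially, \eqref{stationary main} being tested only against vector fields tangent to $\pa\Om$ says precisely that $V$ is stationary \emph{with free boundary} on $\pa\Om$ (with this bounded forcing term). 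Since $\H^n\mres K\le\|V\|\le 2\,\H^n\mres K$, it suffices to prove the lower bound for $\|V\|$, and since \eqref{interior_dlb} gives $K\subset\spt V$, we have $\cl(K)\subset\spt V$.

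Next I would record the geometry near $\pa W$. As $\pa\Om=\pa W$ is a smooth compact hypersurface, there is $\rho_0=\rho_0(W)>0$ such that the nearest-point projection onto $\pa W$ is smooth on $I_{\rho_0}(\pa W)$ and defines a reflection $R=R_W$: a smooth involution of $I_{\rho_0}(\pa W)$ fixing $\pa W$ and interchanging $\Om$ and $W$. Moreover, for all $z\in\pa W$ and $s<\rho_0$, $R$ is $(1+\e_W(s))$-bi-Lipschitz on $B_s(z)$ and satisfies $|R(x)-x|\le(2+\e_W(s))\,\dist(x,\pa W)$ there, with $\e_W(s)\to0^+$ as $s\to0^+$; this quantitative flattening is what ultimately lets the final constant depend only on $n$. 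Now fix $y\in\cl(K)$ with $d:=\dist(y,W)$ small, let $z_0\in\pa W$ realize $d$, and set $\wi V=V+R_\sharp(V\mres B_{\rho_0/2}(z_0))$. By the reflection technique for varifolds with free boundary (in the spirit of Gr\"uter and Jost, here adapted to carry along the bounded curvature term $\vec H$; concretely this amounts to testing \eqref{stationary main} against truncated Fermi-coordinate radial fields centered at $z_0$), $\wi V$ is a genuine integral $n$-varifold in a ball $B_{r_1}(z_0)$, $r_1=r_1(W)>0$, whose generalized mean curvature is bounded by a constant $C=C(n,W,|\l|)$ (depending on $|\l|$ and on the second fundamental form of $\pa W$). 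I expect this step to be the main obstacle: one must check that the free-boundary condition is exactly what \eqref{stationary main} supplies, bound the mean curvature of $\wi V$ with the stated dependence, and keep the distortion of $R$ scale-controlled so that the density ratios of $\wi V$ tend to the flat ones as the scale shrinks; everything afterwards is standard.

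Given $\wi V$, the conclusion comes from monotonicity and a blow-up. Since $y\in\cl(K)\subset\spt V\subset\spt\wi V$, and $\wi V$ is integral with $|\vec H_{\wi V}|\le C$ in $B_{r_1}(z_0)$, the monotonicity formula makes $\rho\mapsto e^{C\rho}\,\|\wi V\|(B_\rho(y))/(\om_n\rho^n)$ non-decreasing on $(0,r_1)$, so $\Theta_{\wi V}(y):=\lim_{\rho\to0^+}\|\wi V\|(B_\rho(y))/(\om_n\rho^n)$ exists. Tangent cones of $\wi V$ at $y$ are stationary integral cones with vertex density $\Theta_{\wi V}(y)$; on such a cone the density is maximized at the vertex and, by integrality, equals at least $1$ at $\H^n$-a.e. point of its (nonempty, positive-measure) support, whence $\Theta_{\wi V}(y)\ge1$. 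Therefore $\|\wi V\|(B_\rho(y))\ge e^{-C\rho}\,\om_n\rho^n\ge\tfrac12\,\om_n\rho^n$ for all $\rho<r_2$, where $r_2=r_2(n,W,|\l|)>0$.

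Finally I would pass back from $\wi V$ to $\|V\|$, and hence to $\H^n\mres K$. Fix $r<r_0$, with $r_0=r_0(n,W,|\l|)>0$ chosen small enough that all the scales used below lie inside the reflection's domain and in the monotonicity range (e.g. $r_0\le\min\{r_*,\,6\,r_2,\,\rho_0/10,\,r_1/2\}$). If $d\le r/6$, the estimates on $R$ give $R(B_{r/6}(y))\subset B_r(y)$, so $\tfrac12\,\om_n(r/6)^n\le\|\wi V\|(B_{r/6}(y))\le\|V\|(B_{r/6}(y))+\|V\|(R(B_{r/6}(y)))\le2\,\|V\|(B_r(y))$, i.e. $\|V\|(B_r(y))\ge c(n)\,r^n$. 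If instead $d>r/6$, then $B_{r/6}(y)\cc\Om$ with $y\in K$, so \eqref{interior_dlb} gives $\|V\|(B_r(y))\ge\H^n(K\cap B_{r/6}(y))\ge\om_n(r/6)^n$. In either case $\H^n(K\cap B_r(y))\ge\tfrac12\,\|V\|(B_r(y))\ge c(n)\,r^n$, which is \eqref{e:uniform_lde}.
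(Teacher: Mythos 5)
Your proposal is correct and follows essentially the same route as the paper: both rest on the observation that \eqref{stationary main} encodes a distributional $\pi/2$ contact angle for $\var(K,\theta)$ along $\pa W$, both use a reflection across $\pa W$ to obtain a monotonicity formula valid up to the wire frame together with a density lower bound $\ge 1$ at every point of $\cl(K)$ near $\pa W$, and both conclude with the same two-case split on $\dist(x,W)$ versus $r$ plus the containment of the reflected ball in a dilated ball. The one difference is that the step you correctly flag as the main obstacle is not re-derived in the paper but imported from Kagaya--Tonegawa \cite[Theorem 3.2]{kagayatone} (building on Gr\"uter--Jost), who prove monotonicity of the summed density ratio $r\mapsto e^{(|\l|+C)r}\,\big(\|V\|(B_r(x))+\|V\|(\tilde B_r(x))\big)/(\om_n r^n)$ directly by testing the first variation against suitably symmetrized vector fields, thereby sidestepping your stronger (and more delicate) assertion that the reflected object is itself an integral varifold with bounded generalized mean curvature across $\pa W$.
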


The proof of Theorem \ref{thm boundary density estimates} starts with the remark that the integral $n$-varifold $V$ naturally associated to $(K,E)$ has generalized mean curvature in $L^\infty$ and it satisfies a distributional formulation of Young's law. More precisely, letting $V$ be the $n$-varifold $V = \var(K,\theta)$ defined by $K$ with multiplicity function
\[
\theta(x) =
\begin{cases}
1 &\mbox{if $x \in \pa^*E$}\\
2 & \mbox{if $x \in K \setminus \pa^*E$}\,,
\end{cases}
\]
then
\[
\|V\| = \H^n \mres (\Om \cap \pa^*E) + 2\,\H^n \mres (K \setminus \pa^*E)
\]
and, by considering \eqref{stationary main} in Theorem \ref{thm basic regularity} on vector fields compactly supported in $\Om$, we see that $V$ has generalized mean curvature vector $\vec{H} = \lambda\, 1_{\pa^*E} \, \nu_E$ in $\Om$. Actually, \eqref{stationary main} says more, since it allows for vector fields not necessarily supported in $\Om$, provided they are tangential to $\pa\Om$, i.e. \eqref{stationary main} gives
\begin{equation} \label{first variation}
\int \Div^KX \, d\|V\| = \int X \cdot \vec{H} \, d\|V\| \quad \mbox{$\forall\,X \in C^1(\Omega; \R^{n+1})$ with $X \cdot \nu_\Om = 0$ on $\pa \Om$}\,.
\end{equation}
The extra information conveyed in \eqref{first variation} is that, in a distributional sense, $V$ has contact angle $\pi/2$ with $\pa \Om$. The consequences of the validity of \eqref{first variation} have been extensively studied in the classical work of Gr\"uter and Jost \cite{gruterjost}, and their work has been recently extended to arbitrary contact angles by Kagaya and Tonegawa \cite{kagayatone}. In particular, if $s_0 \in \left(0,\infty \right)$ is such that the tubular neighborhood $U_{s_0}(\pa W)$ admits a well-defined nearest point projection map $\Pi \colon U_{s_0}(\pa W) \to \pa W$ of class $C^1$ then \cite[Theorem 3.2]{kagayatone} ensures the existence of a constant $C = C(n,s_0)$ such that for any $x \in U_{s_0/6}(\pa W) \cap {\rm cl}(\Om)$ the map
\begin{equation} \label{monotonicity_formula}
r \in (0,s_0/6)\mapsto \frac{\|V\|(B_r(x)) + \|V\|(\tilde B_r(x))}{\omega_n \, r^n} \,e^{(|\l|+C)r}
\end{equation}
is increasing, where
\begin{equation} \label{reflection}
\tilde B_r(x) = \left\lbrace y \in \R^{n+1} \, \colon \, \tilde y \in B_r(x) \right\rbrace \,, \qquad \tilde y = \Pi(y) + (\Pi(y) - y)
\end{equation}
denotes a sort of nonlinear reflection of $B_r(x)$ across $\pa W$. In particular, for $x$ as above the limit
\begin{equation} \label{boundary_density}
\sigma(x) = \lim_{r\to 0^+} \frac{\|V\|(B_r(x)) + \|V\|(\tilde B_r(x))}{\omega_n \, r^n}
\end{equation}
exists for every $x \in U_{s_0/6}(\pa W) \cap \cl(\Om)$, and the map $x \mapsto \sigma(x)$ is upper semicontinuous in there; see \cite[Corollary 5.1]{kagayatone}. The uniform density estimate \eqref{e:uniform_lde} will be deduced as a consequence of the above monotonicity reault, together with the following simple geometric lemma:
\begin{lemma}\label{lemma_ovvio}
Suppose that $x \in U_{s_0}(\pa W)$, and $\rho > 0$ is such that $\dist(x,\pa W) \leq \rho$ and $B_{\rho}(x) \subset U_{s_0}(\pa W)$. Then:
\begin{equation} \label{ovvio2}
\tilde B_{\rho}(x) \subset B_{5\rho}(x)\,.
\end{equation}
\end{lemma}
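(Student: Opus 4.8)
The plan is to unwind the definition of $\tilde B_\rho(x)$ and reduce \eqref{ovvio2} to an elementary comparison of distances. Fix $y$ with $\tilde y := \Pi(y) + (\Pi(y)-y) \in B_\rho(x)$. Since $\tilde y$ is by construction the reflection of $y$ across $\Pi(y)$, the point $\Pi(y)$ is the midpoint of the segment $[y,\tilde y]$, so that
\[
|y - \Pi(y)| \;=\; |\tilde y - \Pi(y)| \;=\; \dist(y,\pa W)\,.
\]
Running the triangle inequality along $y \to \Pi(y) \to \tilde y \to x$ and using $|\tilde y - x| < \rho$ then gives
\[
|y - x| \;\le\; |y - \Pi(y)| + |\Pi(y) - \tilde y| + |\tilde y - x| \;<\; 2\,\dist(y,\pa W) + \rho\,,
\]
and therefore it suffices to prove that $\dist(y,\pa W) < 2\rho$.

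The one point requiring a little care is the footpoint identity $\Pi(\tilde y) = \Pi(y)$. First I would note that $\tilde y$ lies in the domain of $\Pi$, which follows either from $\tilde y \in B_\rho(x) \subset U_{s_0}(\pa W)$, or directly from $|\tilde y - \Pi(y)| = \dist(y,\pa W) < s_0$ together with $\Pi(y) \in \pa W$. Now $\tilde y = \Pi(y) + t\,\nu$, where $\nu := (\Pi(y)-y)/|\Pi(y)-y|$ is a unit normal to $\pa W$ at $\Pi(y)$ (by first-order optimality of the distance) and $t := \dist(y,\pa W) < s_0$. Since the hypothesis that $\Pi$ be single-valued on $U_{s_0}(\pa W)$ forces $s_0$ not to exceed the reach of $\pa W$, every point of the form $\Pi(y) + t\,\nu$ with $t < s_0$ has $\Pi(y)$ as its unique nearest point on $\pa W$; hence $\Pi(\tilde y) = \Pi(y)$. (Alternatively, this property of the reflection map may be quoted directly from \cite{kagayatone}.) Consequently
\[
\dist(y,\pa W) \;=\; |\tilde y - \Pi(y)| \;=\; |\tilde y - \Pi(\tilde y)| \;=\; \dist(\tilde y,\pa W)\,.
\]

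To conclude, I would combine the two hypotheses with the $1$-Lipschitz continuity of $z \mapsto \dist(z,\pa W)$:
\[
\dist(y,\pa W) \;=\; \dist(\tilde y,\pa W) \;\le\; \dist(x,\pa W) + |\tilde y - x| \;<\; \rho + \rho \;=\; 2\rho\,,
\]
where we used $\dist(x,\pa W) \le \rho$ and $\tilde y \in B_\rho(x)$. Plugging this into the first estimate yields $|y - x| < 5\rho$, i.e. $y \in B_{5\rho}(x)$, which is \eqref{ovvio2}. The only genuine obstacle in the argument is the identity $\Pi(\tilde y) = \Pi(y)$ — that reflecting a point across its nearest point on $\pa W$ does not move the nearest point; once this is available, the rest is two lines of triangle-inequality bookkeeping, and the hypothesis $B_\rho(x) \subset U_{s_0}(\pa W)$ is used only to guarantee that $\tilde y$ stays in the domain of $\Pi$.
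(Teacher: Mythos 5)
Your proof is correct. Note that the paper does not actually argue this lemma at all: its ``proof'' is the single line ``See \cite[Lemma 4.2]{kagayatone}'', so there is no in-paper argument to compare against, and what you have written is a clean self-contained substitute for that citation. You correctly isolate the one nontrivial ingredient, namely the footpoint identity $\Pi(\tilde y)=\Pi(y)$ (equivalently, that the reflection preserves the distance to $\pa W$): without it, the chain $\dist(y,\pa W)=|\tilde y-\Pi(y)|$ cannot be bounded by $\dist(\tilde y,\pa W)+{}$error, since a priori one only has the inequality $\dist(\tilde y,\pa W)\le|\tilde y-\Pi(y)|$ in the useless direction. Your justification of the identity is adequate in this setting: since $\Om$ has smooth boundary and $W$ is compact, $U_{s_0}(\pa W)$ is a genuine tubular neighborhood, the normal exponential map $(a,t)\mapsto a+t\,\nu(a)$ is a bijection onto it with $\Pi(a+t\,\nu(a))=a$ for $|t|<s_0$, and $\tilde y=\Pi(y)-t\,\nu$ with $|t|=\dist(y,\pa W)<s_0$. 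The remaining bookkeeping ($|y-x|<2\,\dist(y,\pa W)+\rho$ and $\dist(y,\pa W)=\dist(\tilde y,\pa W)\le\dist(x,\pa W)+|\tilde y-x|<2\rho$) is exactly right and yields the constant $5$. The only cosmetic omission is the degenerate case $y\in\pa W$, where $\tilde y=y$ and the conclusion is immediate; you implicitly divide by $|\Pi(y)-y|$ when defining $\nu$, so a one-line remark covering that case would make the write-up airtight.
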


\begin{proof}
  [Proof of Lemma \ref{lemma_ovvio}] See \cite[Lemma 4.2]{kagayatone}.
\end{proof}

\begin{proof}[Proof of Theorem \ref{thm boundary density estimates}]
First observe that \eqref{e:uniform_lde} holds with $c=\omega_n$ for all $x \in K \setminus U_{s_0/6}(\pa W)$ as soon as $r < \min\{r_*,s_0/6\}$. Therefore, we can assume that
\begin{equation} \label{close_to_boundary}
x \in \cl(K) \cap U_{s_0/6}(\pa W)\,.
\end{equation}
Also note that for points as in \eqref{close_to_boundary} it holds $\sigma(x) \geq 1$: by upper semicontinuity of $\sigma$ on $U_{s_0/6}(\pa W) \cap \cl(\Om)$, we just need to show this when, in addition to \eqref{close_to_boundary}, we have $x \in K$, and indeed in this case
\[
\sigma(x) \geq \lim_{r\to 0^+} \frac{\H^n(K\cap B_r(x))}{\omega_n\, r^n} \geq 1
\]
thanks to \eqref{interior_dlb}. Now we fix $r < r_0 = \min\{r_*,5s_0/6\}$, and distinguish two cases depending on the validity of
\begin{equation} \label{no cross}
\dist(x,\pa W) > \frac{r}{5}\,.
\end{equation}
If \eqref{no cross} holds, then by \eqref{interior_dlb}
\[
\H^n(K \cap B_r(x)) \geq \H^n(K \cap B_{r/5}(x)) \geq \omega_n \, \left( \frac{r}{5} \right)^n\,,
\]
so that \eqref{e:uniform_lde} holds. If $\dist(x,\pa W) \leq r/5$, then, thanks to the obvious inclusion $B_{r/5}(x) \subset U_{s_0}(\pa W)$ we can apply Lemma \ref{lemma_ovvio} with $\rho = r/5$, and \eqref{ovvio2} yields $\tilde B_{r/5}(x) \subset B_r(x)$. Hence, by exploiting $\sigma(x) \geq 1$ and \eqref{monotonicity_formula} we get
\[
\begin{split}
c_n\,r^n &\leq \sigma(x)\,\omega_n\,\left(\frac{r}{5}\right)^n \\
&\leq \left( \|V\|(B_{r/5}(x)) + \|V\|(\tilde B_{r/5}(x))   \right) \, e^{(|\lambda|+C)\,r/5} \\
&\leq 2\, \|V\|(B_r(x)) \, e^{(|\lambda|+C)\,r_0} \leq 8\,\H^n(K\cap B_r(x))\,,
\end{split}
\]
up to further decreasing $r_0$.
\end{proof}

\begin{corollary} \label{cor:Minkowski final for soap films}
Let $(K,E)$ be a generalized minimizer of $\psi(\eps)$, and let $f \colon \R^{n+1} \to \R^{n+1}$ be Lipschitz. Then
\begin{equation} \label{e:Minkowski final}
\lim_{\eta \to 0^+} \frac{|U_\eta(f(\cl(K)))  \cap E |}{2\eta} = \H^n(f(\cl(K)) \cap E)
\end{equation}
whenever $E$ is a Borel set with $\H^n(f(\cl(K))  \cap \pa E )=0$.
\end{corollary}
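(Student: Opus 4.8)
The plan is to deduce the statement by synthesizing three results already established above: the uniform lower density estimate of Theorem~\ref{thm boundary density estimates}, the Minkowski regularity criterion of Theorem~\ref{thm fine}, and the localization statement of Proposition~\ref{prop_localization}. Indeed, the whole purpose of the machinery developed in Sections~\ref{s:Minkowski} and~\ref{s:lde} is to make this combination possible, so the argument is essentially bookkeeping.

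First I would record the preliminary properties of $\cl(K)$. Since $(K,E)$ is a generalized minimizer, $K$ is bounded (see \cite{kms}), so $\cl(K)$ is compact. Moreover $K$ is $\H^n$-rectifiable with $\H^n(K)\le\F(K,E)=\psi(\eps)<\infty$, while $\cl(K)\setminus K=\cl(K)\cap W$ is contained in $\pa\Om$, which is a smooth hypersurface by hypothesis; hence $\cl(K)$ is countably $n$-rectifiable. Finally, by Theorem~\ref{thm boundary density estimates} the bound $\H^n(K\cap B_r(x))\ge c\,r^n$ holds at every $x\in\cl(K)$ for all $r<r_0$, so $\cl(K)$ is contained in the set on which the upper $n$-density of the finite Radon measure $\H^n\mres K$ is at least $c/\om_n$; by the standard density comparison estimate (e.g. \cite[Theorem~6.9]{maggiBOOK}) this forces $\H^n(\cl(K))<\infty$. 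Therefore $\cl(K)$ is a compact $\H^n$-rectifiable set and, since $K\subset\cl(K)$, the estimate \eqref{e:uniform_lde} yields
\[
\H^n(\cl(K)\cap B_r(x))\ge c\,r^n\qquad\forall\,x\in\cl(K)\,,\ \forall\,r<r_0\,,
\]
which is exactly hypothesis \eqref{ldb} of Theorem~\ref{thm fine} with $d=n+1$, $k=n$ and $Z=\cl(K)$.

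Next I would apply Theorem~\ref{thm fine} to $Z=\cl(K)$ and to the Lipschitz map $f\colon\R^{n+1}\to\R^{n+1}$: this gives that $f(\cl(K))$ is Minkowski $n$-regular, i.e. $\M^n(f(\cl(K)))=\H^n(f(\cl(K)))$. Now $f(\cl(K))$ is compact (continuous image of a compact set) and $\H^n$-rectifiable (Lipschitz image of an $\H^n$-rectifiable set, with $\H^n(f(\cl(K)))\le(\Lip f)^n\,\H^n(\cl(K))<\infty$), so Proposition~\ref{prop_localization} applies with $d=n+1$, $k=n$ and $Z=f(\cl(K))$. Recalling that $\om_{d-k}=\om_1=\H^1(B^1_1)=2$ (as $B^1_1$ is an open interval of length $2$), so that $\om_{d-k}\,\eta^{d-k}=2\,\eta$, Proposition~\ref{prop_localization} delivers precisely
\[
\lim_{\eta\to 0^+}\frac{|U_\eta(f(\cl(K)))\cap E|}{2\,\eta}=\H^n(f(\cl(K))\cap E)
\]
for every Borel set $E$ with $\H^n(f(\cl(K))\cap\pa E)=0$, which is \eqref{e:Minkowski final}.

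I do not expect a genuine obstacle here, since the real work was carried out in proving Theorem~\ref{thm boundary density estimates} and Theorem~\ref{thm fine}. The only points deserving a little care are the verification that $\cl(K)$ is truly a \emph{compact} $\H^n$-rectifiable set of finite $\H^n$-measure (which rests on boundedness of $K$, the finiteness $\H^n(K)\le\psi(\eps)$, smoothness of $\pa\Om$, and the density comparison fed by Theorem~\ref{thm boundary density estimates}), and the correct identification of the normalizing constant $\om_1=2$ so that $\om_{d-k}\,\eta^{d-k}$ becomes $2\,\eta$ in this codimension-one setting.
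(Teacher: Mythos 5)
Your proposal is correct and is exactly the argument the paper intends: the paper's proof of this corollary is the one-line remark that it is ``immediate from Theorem \ref{thm fine}, Proposition \ref{prop_localization} and Theorem \ref{thm boundary density estimates}'', and you have simply (and correctly) filled in the bookkeeping — verifying that $\cl(K)$ is a compact $\H^n$-rectifiable set satisfying \eqref{ldb}, applying Theorem \ref{thm fine} to get Minkowski $n$-regularity of $f(\cl(K))$, and then localizing via Proposition \ref{prop_localization} with $\om_1=2$.
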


\begin{proof}
  Immediate from Theorem \ref{thm fine}, Proposition \ref{prop_localization} and Theorem \ref{thm boundary density estimates}.
\end{proof}

\section{Minimality with respect to Lipschitz deformations} \label{s:almgren_minimal} In this section we complete the first step of our strategy, by proving the Almgren minimality of the exterior collapsed set.

\begin{theorem}\label{proposition Lipschitz minimizing}
  If $(K,E)$ is a generalized minimizer of $\psi(\e)$, $B_r(x)\cc\Om\setminus\cl(E)$, and  $f:\R^{n+1}\to\R^{n+1}$ is a Lipschitz map with $\{f\ne\id\}\subset B_r(x)$ and $f(B_r(x)) \subset B_r(x)$, then $\H^n(K\cap B_r(x))\le\H^n(f(K)\cap B_r(x))$.
\end{theorem}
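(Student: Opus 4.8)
The plan is first to reduce to the inequality $\F(K,E)\le\F(f(K),E)$. Since $f=\id$ outside $B_r(x)$ and the set $\pa^*E\subset\cl(E)$ is disjoint from the compactly contained ball $\ov{B_r(x)}$, I compute $\F(f(K),E)-\F(K,E)=2\,\big(\H^n(f(K)\cap\ov{B_r(x)})-\H^n(K\cap\ov{B_r(x)})\big)$; moreover $\{f\ne\id\}\subset B_r(x)$ together with $f(B_r(x))\subset B_r(x)$ force $f(K)\cap\pa B_r(x)\subset K\cap\pa B_r(x)$, so cancelling the finite term $\H^n(K\cap\pa B_r(x))$ reduces $\F(K,E)\le\F(f(K),E)$ to the statement. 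By the same computation, after enlarging $B_r(x)$ slightly to a concentric ball still compactly contained in $\Om\setminus\cl(E)$ (the hypotheses on $f$ being preserved since $f=\id$ on the added annulus) across whose bounding sphere $\cl(K)$ has vanishing $\H^n$-measure -- possible because $\cl(K)\cap\ov{B_\rho(x)}=K\cap\ov{B_\rho(x)}$ has finite $\H^n$-measure for $\ov{B_\rho(x)}\cc\Om$ -- I may assume henceforth that $\H^n(\cl(K)\cap\pa B_r(x))=0$.

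\textbf{Recovery sequence for $\psi(\e)$.} Because $(K,E)$ is a generalized minimizer, $\F(K,E)=\psi(\e)$; so it suffices to build open sets $F_j$ with $\H^n$-rectifiable boundary, $|F_j|=\e$, and $\Om\cap\pa F_j$ being $\C$-spanning $W$, such that $\limsup_j\H^n(\Om\cap\pa F_j)\le\F(f(K),E)$, whence $\F(K,E)=\psi(\e)\le\F(f(K),E)$. First I would check $(f(K),E)\in\KK$: $f(K)$ is $\H^n$-rectifiable as a Lipschitz image of $K$; it is relatively closed in $\Om$, being the union of the relatively closed set $K\setminus B_r(x)$ and the compact set $f(K\cap\ov{B_r(x)})$; it contains $\Om\cap\cl(\pa^*E)$ since $f$ fixes $\Om\cap\pa E\subset K$ pointwise; and it is $\C$-spanning $W$, the only subtle point, following the deformation argument of \cite[Proof of Theorem 4, Step 3]{DLGM} since $\{f\ne\id\}\cc\Om$. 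The competitor $F_j^0$ I would use is the ``mixed neighborhood'' of the introduction: the symmetric open tubular neighborhood $U_{\eta_j}(f(K))$ inside $B_r(x)$, a one-sided thickening of $K$ outside $\cl(B_r(x))$ (legitimate there since $f(K)=K$ is smoothly orientable off a meager set by Theorem \ref{thm basic regularity}-(ii)), glued along $\pa B_r(x)$ so that $\pa B_r(x)\cap U_{\eta_j}(f(K))\subset\pa F_j^0$, with $I_{\eta_j}(K\cap E)$ removed when $K\cap E\ne\emptyset$. That $F_j^0$ is well-defined, has $\C$-spanning boundary, satisfies $|F_j^0|\to\e$, and obeys $\H^n\big((\Om\setminus\ov{B_r(x)})\cap\pa F_j^0\big)\to\H^n(\Om\cap\pa^*E)+2\,\H^n\big((K\setminus\pa^*E)\setminus\ov{B_r(x)}\big)$ is precisely the content of Lemma \ref{l:one_sided_fattening}, a variant of \cite[Lemma 3.2]{kms2}.

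\textbf{Area inside $B_r(x)$ and the volume constraint.} To finish I would control $\H^n(\ov{B_r(x)}\cap\pa F_j^0)$. By Theorem \ref{thm boundary density estimates}, $\cl(K)$ has uniform lower $n$-density bounds, so Theorem \ref{thm fine} applied to the compact $\H^n$-rectifiable set $\cl(K)\cap\ov{B_r(x)}$ and the Lipschitz map $f$ gives $\M^n(f(\cl(K)))=\H^n(f(\cl(K)))$; since $\H^n(f(\cl(K))\cap\pa B_r(x))=\H^n(\cl(K)\cap\pa B_r(x))=0$, Proposition \ref{prop_localization} (cf.\ Corollary \ref{cor:Minkowski final for soap films}) yields $|U_\eta(f(K))\cap B_r(x)|/(2\eta)\to\H^n(f(K)\cap B_r(x))$. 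Applying the coarea formula to $\dist(\cdot,f(K))$ on dyadic annuli and selecting a suitable infinitesimal sequence $\eta_j$, I get $\limsup_j\H^n\big(B_r(x)\cap\pa U_{\eta_j}(f(K))\big)\le2\,\H^n(f(K)\cap B_r(x))$, while $\H^n\big(\pa B_r(x)\cap U_{\eta_j}(f(K))\big)\to\H^n(\pa B_r(x)\cap\cl(K))=0$. Together with the previous step and $\H^n(f(K)\cap\pa B_r(x))=0$, this gives $\limsup_j\H^n(\Om\cap\pa F_j^0)\le\F(f(K),E)$. Finally, to restore $|F_j^0|=\e$ I would compose with a diffeomorphism $\Phi_{t_j}$ of $\R^{n+1}$, equal to $\id$ near $W$ (hence mapping $\Om$ onto itself) and homotopic to $\id$ through diffeomorphisms of $\Om$, concentrated in a fixed ball deep inside $E$ with $|\Phi_{t_j}(F_j^0)|=\e$ and $t_j\to0$; then $F_j=\Phi_{t_j}(F_j^0)$ is admissible for $\psi(\e)$, $\Om\cap\pa F_j$ is $\C$-spanning (a diffeomorphism homotopic to $\id$ preserves this), and $\H^n(\Om\cap\pa F_j)$ has the same $\limsup$, completing the argument.

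\textbf{Where the difficulty lies.} The crux -- and the only place the specific structure of $\psi(\e)$ is exploited -- is the spanning property of the mixed competitor, i.e.\ Lemma \ref{l:one_sided_fattening}. A purely symmetric neighborhood of $f(K)$ buries $f(K)$ in its interior, so nothing guarantees that a $\C$-spanning curve still meets $\pa F_j^0$ once the non-injective map $f$ has destroyed the smooth orientability needed to form one-sided neighborhoods; a globally one-sided neighborhood of $f(K)$, on the other hand, cannot be defined because $f(K)$ need not be orientable off a meager set. The way out is to one-side only where $K=f(K)$ (outside $B_r(x)$), to use a symmetric neighborhood inside the topologically trivial ball $B_r(x)$ while \emph{forcing} $\pa B_r(x)\cap U_{\eta_j}(f(K))$ into $\pa F_j^0$, and to show -- by threading, for each $\gamma\in\C$, an intersection of $\gamma$ with $f(K)$ through either the outer one-sided face or the retained spherical piece, in the spirit of \cite{DLGM,kms2} -- that the combined boundary spans. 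I expect this spanning verification, rather than the Minkowski-content bookkeeping, to be the main obstacle.
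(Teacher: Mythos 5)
Your proposal is correct and follows essentially the same route as the paper: the same reduction to a slightly larger concentric ball $B_\rho(x)$ with $\H^n(K\cap\pa B_\rho(x))=0$, the same mixed competitor (a symmetric tube $U_{\eta_j}(f(K))$ inside the ball, controlled via Theorem \ref{thm boundary density estimates}, Theorem \ref{thm fine}, Corollary \ref{cor:Minkowski final for soap films} and a coarea selection of good radii $\eta_j$, glued to the one-sided thickening of Lemma \ref{l:one_sided_fattening} with $U=B_\rho(x)$ outside), and the same spanning verification by threading each $\gamma\in\C$ through either the one-sided face, the retained spherical piece $\pa B_\rho(x)\cap\cl(U_{\eta_j}(f(K)))$, or a level set of $\dist(\cdot,f(K))$ via \cite[Lemma 2.2]{kms} and the argument of \cite[Proof of Theorem 4, Step 3]{DLGM}. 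The one genuine deviation is the volume constraint: you restore $|F_j|=\e$ exactly by composing with a volume-fixing diffeomorphism supported deep inside $E$, whereas the paper simply notes $|G_j|\to\e$ and invokes the lower semicontinuity of $\psi$ on $(0,\infty)$ from \cite[Theorem 1.9]{kms} to get $\psi(\e)\le\liminf_j\psi(|G_j|)\le\liminf_j\H^n(\Om\cap\pa G_j)$; both work, the paper's route being slightly lighter since it avoids verifying that the correcting diffeomorphism perturbs areas only by $o(1)$ and preserves the spanning property.
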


As explained in the introduction, an important tool in the proof is the construction of one-sided neighborhoods of $K$. This point is discussed in the following lemma, which is, in fact, an extension of \cite[Lemma 3.2]{kms2} (which corresponds to the case $U=\emptyset$).
\begin{figure}
    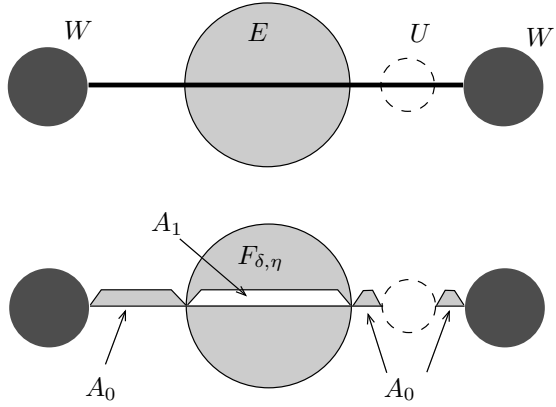\caption{{\small The construction in Lemma \ref{l:one_sided_fattening} gives a one-sided neighborhood of $K$ away from $\cl(E)\cup\cl(U)\cup W$, thus defining an open set $F_{\de,\eta}$ which contains $K\setminus \cl(U)$ in its boundary, and which collapses onto $K\setminus \cl(U)$ as $\eta\to 0^+$.}}\label{fig a0a1U}
  \end{figure}

\begin{lemma} \label{l:one_sided_fattening}
Let $K \subset \Om$ be a relatively compact and $\H^n$-rectifiable set, let $E \subset \Om$ be an open set with $\Om \cap \cl(\pa^*E) = \Om \cap \pa E \subset K$, and let $U \ssubset \Om \setminus \cl (E)$ be an open set. Suppose that $\Sigma \subset K$ is a closed subset with empty interior relatively to $K$ such that $K \setminus \Sigma$ is a smooth hypersurface in $\Om$ such that there exists $\nu \in C^\infty(K \setminus \Sigma ; \mathbb{S}^n)$ with $\nu(x)^\perp = T_x(K \setminus \Sigma)$ at every $x \in K \setminus \Sigma$. Set
\[
M=K \setminus (\Sigma \cup \pa E \cup \cl(U))\,,
\]
and decompose $M=M_0\cup M_1$ by letting
\begin{equation*}
M_0 = (K \setminus \Sigma) \setminus (\cl (E) \cup \cl (U))\,, \quad M_1 = (K \setminus \Sigma) \cap E\,.
\end{equation*}
Let $\|A_M\|(x)$ be the maximal principal curvature (in absolute value) of $M$ at $x$. For given $\eta,\delta \in \left( 0, 1 \right)$, define a positive function $u \colon M \to \left(0,\eta\right]$ by setting
\begin{equation} \label{osf:the function u}
u(x) = \min\left\lbrace \eta\,,\,   \frac{\dist(x,\Sigma \cup \pa E \cup \cl(U) \cup W)}{2}\,,\, \frac{\delta}{\|A_M\|(x)}    \right\rbrace\,,
\end{equation}
and let
\begin{eqnarray*}
A_0 &=& \left\lbrace  x + t\,u(x)\,\nu(x)\,\colon\, x \in M_0\,,\, 0 < t < 1  \right\rbrace\,,\\
A_1 &=& \left\lbrace  x + t\,u(x)\,\nu(x)\,\colon\, x \in M_1\,,\, 0 < t < 1  \right\rbrace\,,\\
F_{\delta,\eta} &=& A_0 \cup (E \setminus \cl (A_1))\,.
\end{eqnarray*}
Then, $F_{\de,\eta} \subset \Om \setminus \cl(U)$ is open, $\pa F_{\de,\eta}$ is $\H^n$-rectifiable, and
\begin{eqnarray} \label{osf1}
 K \setminus \cl (U) & \subset & \Om \cap \pa F_{\delta,\eta}  \setminus \cl(U)\,, \\ \label{osf4}
 \pa F_{\delta,\eta} \cap \pa U &\subset & K \cap \pa U\,.
\end{eqnarray}
Moreover,
\begin{equation} \label{osf2}
\lim_{\eta \to 0^+} |F_{\de,\eta} \, \Delta\, E| = 0 \qquad \mbox{for every $\de$}\,,
\end{equation}
and
\begin{equation} \label{osf3}
\begin{split}
\limsup_{\eta \to 0^+} \;& \H^n(\Om \cap \pa F_{\delta,\eta} \setminus \cl(U)) \\& \leq (1+\delta)^n \, \Big( \H^n(\Om \cap \pa^*E) + 2\,\H^n(K \setminus (\cl(U) \cup \pa^*E)) \Big)\,.
\end{split}
\end{equation}
\end{lemma}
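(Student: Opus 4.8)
The plan is to build $F_{\delta,\eta}$ explicitly from the sets $A_0$, $A_1$ and $E$ as in the statement, and then verify the five assertions in turn. The key geometric picture is that, because $\nu$ is a smooth unit normal field on $K\setminus\Sigma$ and $u$ is a small positive function dominated by $\delta/\|A_M\|$, the normal exponential map $\Phi(x,t)=x+t\,u(x)\,\nu(x)$ is a diffeomorphism from $M\times(0,1)$ onto the ``half-collar'' $A_0\cup A_1$, and in fact is bi-Lipschitz with Lipschitz constants controlled by $1+\delta$; this is the standard tubular-neighborhood estimate for hypersurfaces with a curvature bound, and it is precisely the mechanism that produces the factor $(1+\delta)^n$ in \eqref{osf3}. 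I would first record this diffeomorphism property and the fact (from the choice $u(x)\le \dist(x,\Sigma\cup\pa E\cup\cl(U)\cup W)/2$) that $A_0$ stays away from $\cl(E)\cup\cl(U)\cup W$ while $A_1$ stays inside $E$ and away from $\cl(U)\cup W$; in particular $\cl(A_1)\subset E$, so $E\setminus\cl(A_1)$ is open, and $F_{\delta,\eta}=A_0\cup(E\setminus\cl(A_1))$ is open as a union of two open sets. Since $A_0\cap U=\emptyset$ and $(E\setminus\cl(A_1))\cap\cl(U)=\emptyset$, we get $F_{\delta,\eta}\subset\Om\setminus\cl(U)$.

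For the boundary structure and \eqref{osf1}--\eqref{osf4}: away from $\Sigma\cup\pa E\cup\cl(U)\cup W$, a point $x\in M_0$ has the open set $A_0$ accumulating on one side (the $\nu$ side) and the complement of $F_{\delta,\eta}$ on the other, so $x\in\pa F_{\delta,\eta}$; for $x\in M_1$, $x$ sits on the common boundary of $E\setminus\cl(A_1)$ and of $\cl(A_1)$, both inside $E$, so again $x\in\pa F_{\delta,\eta}$. Points of $K\cap\pa E$ (not in $\cl(U)$) lie in $\pa E\subset\pa F_{\delta,\eta}$ because $F_{\delta,\eta}$ agrees with $E$ near such points up to the collar, which is accounted for by openness of $E$ on one side; and points of $(K\setminus\Sigma)\cap\cl(U)$ need separate treatment but are handled by \eqref{osf4}. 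Here $\Sigma$ is $\H^n$-negligible with empty interior in $K$, so $K\setminus\cl(U)\subset\cl(\pa F_{\delta,\eta})$ follows by density and closedness of $\pa F_{\delta,\eta}$, giving \eqref{osf1}. Relation \eqref{osf4} is the new ingredient beyond \cite[Lemma 3.2]{kms2}: since $u(x)\le\dist(x,\cl(U))/2$, the collar $A_0\cup A_1$ is disjoint from $\cl(U)$, hence near a point $p\in\pa U$ the set $F_{\delta,\eta}$ coincides with $A_0\cup(E\setminus\cl(A_1))$ which touches $\pa U$ only along $K$; so $\pa F_{\delta,\eta}\cap\pa U\subset K\cap\pa U$. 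Rectifiability of $\pa F_{\delta,\eta}$ follows since it is contained in the union of $K$ (which is $\H^n$-rectifiable) with the Lipschitz image $\Phi(M\times\{1\})$ of the graph $\{(x,1)\}$, i.e.\ the ``outer'' boundary layer $\{x+u(x)\nu(x):x\in M\}$, which is an $\H^n$-rectifiable set as a Lipschitz image of the rectifiable set $M$.

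For \eqref{osf2}: $F_{\delta,\eta}\,\Delta\,E\subset A_0\cup A_1$, and $|A_0\cup A_1|=|\Phi(M\times(0,1))|\le \int_M u(x)\,(1+\delta)^n\,d\H^n(x)\le(1+\delta)^n\eta\,\H^n(M)\to 0$ as $\eta\to0^+$ (for fixed $\delta$), using the Jacobian estimate $\le(1+\delta)^n$ for $\Phi$ and $u\le\eta$. For \eqref{osf3}: the topological boundary $\pa F_{\delta,\eta}$ within $\Om\setminus\cl(U)$ is contained in $(\Om\cap\pa^*E)\cup(K\setminus(\cl(U)\cup\pa^*E))\cup\Phi(M_0\times\{1\})\cup\Phi(M_1\times\{1\})$, roughly: near $M_0$ the set $F_{\delta,\eta}$ is the one-sided collar so its boundary has the two sheets $M_0$ (weight $1$) and $\Phi(M_0\times\{1\})$ (weight $1$); near $M_1$ the set $E\setminus\cl(A_1)$ has boundary $M_1$ and $\Phi(M_1\times\{1\})$; the part of $\pa E$ not in $K\setminus\pa^*E$ contributes $\Om\cap\pa^*E$ once. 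Bounding each outer sheet by the area formula, $\H^n(\Phi(M_i\times\{1\}))\le\int_{M_i}(1+\delta)^n\,d\H^n$, and noting $M_0\cup M_1=K\setminus(\Sigma\cup\pa E\cup\cl(U))$ has $\H^n$-measure equal to $\H^n(K\setminus(\cl(U)\cup\pa^*E))$ (as $\H^n(\Sigma)=0$ and $\H^n(\pa E\setminus\pa^*E)=0$), we get in the limit $\eta\to0^+$ the bound $(1+\delta)^n\big(\H^n(\Om\cap\pa^*E)+2\,\H^n(K\setminus(\cl(U)\cup\pa^*E))\big)$, since $K\setminus(\cl(U)\cup\pa^*E)$ is counted once as the ``inner'' sheet and once as the image of the ``outer'' sheet.

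The main obstacle is the bookkeeping near $\pa E$ and near $\Sigma\cap\cl(U)$: one must check carefully that no extra boundary is created where the collar $A_1$ detaches from $E$ and where $A_0$ abuts $\pa E$, and that the overlap of $A_0$ with $A_1$ across $\pa E$ does not double-count or leave gaps — this is exactly why the function $u$ is forced to vanish (to order $\dist/2$) on approach to $\Sigma\cup\pa E\cup\cl(U)\cup W$, so that the collar ``closes up'' continuously there. A secondary technical point is making the Jacobian bound $J\Phi\le(1+\delta)^n$ rigorous on the rectifiable (not smooth) set $M$ — but $M=M_0\cup M_1$ is an open subset of the smooth hypersurface $K\setminus\Sigma$, so this reduces to the classical normal-collar computation $|\det(I-t\,u(x)\,A_M(x)-\ldots)|\le(1+\delta)^n$ valid wherever $t\,u(x)\|A_M\|(x)\le\delta$, which holds by construction of $u$.
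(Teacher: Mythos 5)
Your construction and overall strategy coincide with the paper's: a one-sided normal collar over $M_0$ and $M_1$, identification of $\pa F_{\de,\eta}$ with $K$ plus the outer sheet $\{x+u(x)\,\nu(x):x\in M\}$, and a $(1+\de)^n$ Jacobian bound. There is, however, one genuine gap. The map $\Phi(x,t)=x+t\,u(x)\,\nu(x)$ is not a diffeomorphism, nor bi-Lipschitz with constant close to $1$: the function $u$ contains the branch $\de/\|A_M\|(x)$, and $\|A_M\|$ (hence $u$) need not have small, or even bounded, tangential gradient on $M$, because the curvature of $K\setminus\Sigma$ is uncontrolled near $\Sigma$. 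The tangential Jacobian of $x\mapsto x+u(x)\,\nu(x)$ is therefore not $\prod_i(1+u\,\kappa_i)$ but carries an extra contribution from $\nabla^M u\otimes\nu$, and the asserted bound $J\le(1+\de)^n$ fails in general; this is exactly what the ``$-\ldots$'' in your last paragraph hides. The paper's device is to estimate the outer sheet only over the sets $M_\eta=\{x\in M: u(x)=\eta\}$, on which the map coincides with the constant shift $x\mapsto x+\eta\,\nu(x)$ (so $\nabla u$ drops out and the classical bound $\prod_i(1+\eta\,\kappa_i)\le(1+\de)^n$ applies, since $\eta\,\|A_M\|\le\de$ there), and then to exhaust using $M_\eta\uparrow M$. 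Your proofs of \eqref{osf2} and of the rectifiability of the outer sheet should likewise not rely on a global Lipschitz or diffeomorphism property; injectivity of the normal map for $t\,u(x)<1/\|A_M\|(x)$ together with exhaustion by compact subsets is what is actually available.

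Two smaller points. In the final bookkeeping you invoke $\H^n(M)=\H^n(K\setminus(\cl(U)\cup\pa^*E))$ ``as $\H^n(\Sigma)=0$ and $\H^n(\pa E\setminus\pa^*E)=0$'': neither is a hypothesis of the lemma ($\Sigma$ is only assumed closed with empty interior in $K$). Only the inequality $\H^n(M)\le\H^n(K\setminus(\cl(U)\cup\pa^*E))$ is needed for the upper bound \eqref{osf3}, and it follows from the inclusion $M\subset K\setminus(\pa E\cup\cl(U))\subset K\setminus(\pa^*E\cup\cl(U))$. Likewise $\cl(A_1)\subset E$ is false in general (limits of points of $A_1$ over $x_j\to x_0\in\pa E\cup\Sigma$ need not lie in $E$), though openness of $E\setminus\cl(A_1)$ does not require it. Finally, the inclusions $\Om\cap\pa E\subset\pa F_{\de,\eta}$ and $\Sigma\setminus(\pa E\cup\cl(U))\subset\pa F_{\de,\eta}$, which you correctly flag as the main bookkeeping obstacle, do require the closure and density arguments the paper carries out (splitting $\pa E$ according to whether it meets $\cl(A_0)\cup\cl(A_1)$, and using that $\Sigma$ has empty interior in $K$); as written, your sketch does not supply them.
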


Without losing sight of the general picture, we first prove Theorem \ref{proposition Lipschitz minimizing}, and then take care of proving Lemma \ref{l:one_sided_fattening}.

\begin{proof}[Proof of Theorem \ref{proposition Lipschitz minimizing}] Let us recall from \cite[Lemma 3.1]{kms2}, that if $M$ is a smooth hypersurface in $\R^{n+1}$, then there exists a closed set $J \subset M$ with empty interior in $M$ such that a smooth unit normal vector field to $M$ can be defined on $M \setminus J$. Combining this fact with Theorem \ref{thm basic regularity}-(ii), we find that if $(K,E)$ is a generalized minimizer of $\psi(\eps)$, then there exists a subset\footnote{This set $\Sigma$ could be much larger than the singular set of $K$, but is denoted with same letter used for the singular set of $K$ since the notation should be clear from the context.} $\Sigma \subset K$, closed and with empty interior relatively to $K$, such that $K \setminus \Sigma$ is a smooth \emph{orientable} hypersurface in $\R^{n+1}$. We shall denote $\nu$ a smooth unit normal vector field on $K \setminus \Sigma$.

\medskip

Let us fix $\rho>r$ such that $B_\rho(x)\cc \Om\setminus \cl(E)$ and
\begin{equation} \label{choice of rho}
\H^n(K \cap \pa B_\rho(x))=0\,.
\end{equation}
Since $f(K)\cap \pa B_\rho(x)=K\cap \pa B_\rho(x)$, we can apply Corollary \ref{cor:Minkowski final for soap films} to find
\[
\lim_{t\to 0^+}\frac{|U_t(f(K))\cap B_\rho(x)|}{2\,t}=\H^n\big(f(K)\cap B_\rho(x)\big)\,.
\]
By applying the coarea formula to the distance function from $f(K)$, see e.g. \cite[Theorem 18.1, Remark 18.2]{maggiBOOK}, we find that $v(t)=|U_t(f(K)) \cap B_\rho(x)|$ satisfies
\[
v(t)=\int_{0}^t \H^n\big(\pa (U_\eta(f(K))) \cap B_\rho(x)\big) \, d\eta\,,
\]
and is thus absolutely continuous, with
\[
v'(\eta)= \H^n\big(\pa (U_\eta(f(K))) \cap B_\rho(x)\big)\,, \qquad \mbox{for a.e. $\eta > 0$}\,;
\]
moreover, again for a.e. $\eta>0$,
\begin{equation}
  \label{uetaK good}
  \begin{split}
  &\mbox{$U_\eta(f(K))$ is a set of finite perimeter}
  \\
  &\mbox{whose reduced boundary is $\H^n$-equivalent to $\pa[U_\eta(f(K))]$}\,.
  \end{split}
\end{equation}
Therefore, for every $t>0$ there are points of differentiability $\eta_1(t), \eta_2(t) \in \left(0,t\right)$ of $v$ such that \eqref{uetaK good} holds at $\eta=\eta_1(t),\eta_2(t)$, and
\[
v'(\eta_1(t)) \leq \frac{v(t)}{t} \leq v'(\eta_2(t))\,.
\]
Picking any sequence $t_j\to 0^+$, and correspondingly setting $\eta_j=\eta_1(t_j)$, we thus find
\begin{eqnarray} \nonumber
\liminf_{j \to \infty} \H^n\Big(\pa ( U_{\eta_j}(f(K))  )\cap B_\rho(x)\Big)& \leq& \liminf_{j \to \infty} \frac{|U_{t_j}(f(K)) \cap B_\rho(x)|}{t_j}
 \\ \label{final_2}
 &=&2\,\H^n(f(K) \cap B_\rho(x))\,.
\end{eqnarray}
We also notice that since $\{\cl(U_{\eta_j}(f(K)))\}_j$ is a decreasing sequence of sets with monotone limit $\cl(f(K))$, we have
\begin{equation}
  \label{mancava}
  \lim_{j\to\infty}\H^n\Big(\pa B_\rho(x)\cap \cl(U_{\eta_j}(f(K)))\Big)=\H^n\Big(\pa B_\rho(x)\cap \cl(f(K))\Big)=\H^n(\pa B_\rho(x)\cap K)=0\,,
\end{equation}
again thanks to \eqref{choice of rho}. We now pick $\de\in(0,1)$, and define $\{G_j\}_j$ by letting
\begin{equation} \label{almgren_competitors}
G_j =\left( U_{\eta_j}(f(K)) \cap B_\rho (x) \right) \cup F_j \subset \Omega\,,
\end{equation}
with $F_j=F_{\delta,\eta_j}$ as in Lemma \ref{l:one_sided_fattening} with $U = B_\rho(x)$. Since
\begin{equation} \label{boundary containment}
\pa G_j \subset \Big(\pa (U_{\eta_j}(f(K))) \cap B_\rho(x) \Big) \cup \Big( \cl(U_{\eta_j}(f(K))) \cap \pa B_\rho(x) \Big) \cup \pa F_j\,,
\end{equation}
by \eqref{uetaK good}  we see that $\pa G_j$ is $\H^n$-rectifiable for every $j$. Next, we make the following claim
\begin{equation}
  \label{claim}\,\mbox{$\Om \cap \pa G_j$ is $\C$-spanning $W$ for every $j$}\,,
\end{equation}
which implies that $G_j$ is a competitor for the problem $\psi(|G_j|)$. In this way, by
\[
E \, \Delta \, G_j \subset (E\,\Delta\, F_j) \cup U_{\eta_j}(f(K))\,,
\]
and by \eqref{osf2} we find that $|G_j| \to \eps$ as $j \to \infty$, and since $\psi(\eps)$ is lower semicontinuous on $(0,\infty)$, see \cite[Theorem 1.9]{kms}, we conclude that
 \begin{eqnarray*}
 \H^n(\Om\cap\pa^*E) + 2\,\H^n(K\setminus\pa^*E) &=&\psi(\e)
    \le\liminf_{j\to\infty}\psi(|G_j|)
    \\
    &\le&\liminf_{j\to\infty}\H^n(\Om\cap\pa G_j)\,.
  \end{eqnarray*}
In turn, \eqref{boundary containment} implies that
\begin{equation} \label{estimate sum}
\begin{split}
\H^n(\Om \cap \pa G_j) \leq \H^n(& \pa (U_{\eta_j}(f(K)))  \cap B_\rho(x)) + \H^n\Big(\cl(U_{\eta_j}(f(K))) \cap \pa B_\rho(x)\Big)
\\
& + \H^n(\pa F_j \cap \Om \setminus \cl(B_\rho(x))) + \H^n(\pa F_j \cap \pa B_\rho(x))\,,
\end{split}
\end{equation}
and thus, thanks to \eqref{choice of rho}, \eqref{final_2}, \eqref{mancava}, \eqref{osf4} and \eqref{osf3}, we have that
\begin{eqnarray*}
&&\H^n(\Om \cap \pa^* E) + 2\, \H^n (K \setminus \pa ^*E)
\\
&&\leq2\,\H^n(f(K) \cap B_\rho(x))
+ (1+\delta)^n \, \Big\{ \H^n(\Om \cap \pa^*E) + 2\,\H^n(K \setminus (\cl(B_\rho(x)) \cup \pa^*E)) \Big\}\,.
\end{eqnarray*}
By using again \eqref{choice of rho} and letting $\delta \to 0^+$ we deduce
\[
\H^n(K\cap B_\rho(x))\le\H^n(f(K)\cap B_\rho(x))\,.
\]
To complete the proof we are thus left to prove our claim \eqref{claim}.

\medskip

Given $\gamma \in \C$ we want to show that
\begin{equation}
  \label{comoda}
  \g\cap\Om\cap\pa G_j\ne\emptyset\,.
\end{equation}
If $\gamma \cap (K \setminus \cl(B_\rho(x))) \ne \emptyset$, then by \eqref{osf1} we also have $\gamma \cap (\Om\cap \pa F_j \setminus \cl (B_\rho(x))) \ne \emptyset$, and \eqref{comoda} holds. We can then suppose that $\gamma \cap (K \setminus \cl(B_\rho(x)))  = \emptyset$, so that, since $K$ is $\C$-spanning $W$, $\gamma \cap K \cap \cl(B_\rho(x)) \neq \emptyset$.

\medskip

If there is $x_0 \in \gamma \cap K \cap \pa B_\rho(x)$, then necessarily $x_0 \in \pa G_j$. Indeed, $G_j \cap \pa B_\rho(x) = \emptyset$ by construction, so that $x_0 \notin G_j$; on the other hand, since $\{f\ne\id\}\cc B_\rho(x)$, we have that $x_0 \in f(K)$, and thus $x_0 \in \cl(U_{\eta_j}(f(K)) \cap B_\rho(x)) \subset \cl(G_j)$.

\medskip

Hence we can assume $\gamma\cap (K \setminus B_\rho(x)) = \emptyset$, and thus the existence of $x_0 \in \gamma \cap K \cap B_\rho(x)$. By \cite[Lemma 2.2]{kms}, there exists a connected component $\gamma_0$ of $\gamma \cap \cl(B_\rho(x))$ which is diffeomorphic to an interval, whose end-points $p,q$ belong to different connected components of $\pa B_\rho(x) \setminus K$, and such that $\gamma_0 \setminus \{p,q\} \subset B_\rho(x)$. Arguing as in \cite[Proof of Theorem 4, Step 3]{DLGM}, we conclude that in fact $p=f(p)$ and $q=f(q)$ belong to the closures of distinct connected components of $B_\rho(x) \setminus f(K)$, and thus there exists $y_0 \in (\gamma_0\setminus\{p,q\}) \cap f(K)$. Let $u$ be the function $u(y) = \dist(y, f(K)\cap B_\rho(x))$, and consider its restriction to the interval $\gamma_0$. If $\min\{u(p),u(q)\} \leq \eta_j$, then either $p$ or $q$ belongs to $\pa B_\rho(x)\cap\cl(U_{\eta_j}(f(K))\cap B_\rho(x))  \subset \pa G_j$. Otherwise, both $u(p) > \eta_j$ and $u(q) > \eta_j$, whereas $u(y_0)=0$, and thus, by the intermediate value theorem, $\gamma_0 \cap B_\rho(x) \cap \pa(U_{\eta_j}(f(K))) \neq \emptyset$. Thus $\gamma_0\cap\pa G_j\ne\emptyset$, and the proof is complete.
\end{proof}

\begin{proof}[Proof of Lemma \ref{l:one_sided_fattening}] Let us recall that we have set
\begin{eqnarray*}
M&=&K \setminus (\Sigma \cup \pa E \cup \cl(U))=M_0\cup M_1\,,
\\
M_0&=&M\setminus\cl(E)=(K \setminus \Sigma) \setminus (\cl (E) \cup \cl (U))\,,
\\
M_1&=&M\cap E= (K \setminus \Sigma) \cap E\,,
\end{eqnarray*}
and
\[
A_0=g(M_0\times(0,1))\,,\qquad A_1=g(M_1\times(0,1))\,,\qquad F=F_{\de,\eta}=A_0\cup \big(E\setminus\cl(A_1)\big)\,,
\]
where $g:M\times\R\to\R^{n+1}$ and $u:M\to(0,\eta]$ are defined by setting
\begin{eqnarray*}
u(x)&=&\min\Big\{ \eta\,,\,   \frac{\dist(x,\Sigma \cup \pa E \cup \cl(U) \cup W)}{2}\,,\, \frac{\delta}{\|A_M\|(x)}\Big\}\,,
\\
g(x,t)&=&x+t\,u(x)\,\nu(x)\,.
\end{eqnarray*}
We divide the argument in two steps.

\medskip

\noindent {\it Step one:} In this step we prove \eqref{osf4} as well as
  \begin{eqnarray}
    \label{opening -2}
    &&\mbox{$F$ is open with $F\subset\Om \setminus \cl(U)$}\,,
    \\
    \label{opening -1}
    &&(K\setminus \cl (U) )\cup\Big\{x+u(x)\,\nu(x):x\in M\Big\}=\Om\cap\pa F \setminus \cl (U)\,.
  \end{eqnarray}
  Notice that \eqref{opening -1} immediately implies \eqref{osf1}, while \eqref{osf2} follows from $F \, \Delta \, E \subset A_0 \cup\, \cl(A_1)\subset I_\eta(K)$ and the fact that, as $\eta\to 0^+$, $|I_\eta(K)|\to|K|=0$. Therefore in step two we will only have to prove the validity of \eqref{osf3}.

  \medskip

  Since $M_0$ and $M_1$ are relatively open in $M$ and $u$ is positive on $M$, it is easily seen that $A_0$ and $A_1$ are open, and thus that $F$ is open. Since $M\subset\Om\setminus \cl(U)$ and $u(x)<\dist(x,W \cup \cl(U))$ for every $x\in M$, we deduce that $A_0=g(M_0\times(0,1))\subset\Om\setminus\cl(U)$; since trivially $E\subset\Om\setminus\cl(U)$, we have proved \eqref{opening -2}.

  \medskip

  As a preliminary step towards proving \eqref{opening -1}, we show that, for $k=0,1$, we have
  \begin{equation}
    \label{opening 0}
     M_k\cup\big\{x+u(x)\,\nu(x):x\in M_k\big\}\,\,\subset\,\,   \Om \cap \pa A_k \,\,\subset\,\, K\cup\big\{x+u(x)\,\nu(x):x\in M_k\big\}\,.
  \end{equation}
  The first inclusion in \eqref{opening 0} is due to the fact that if $y=x+s\,\nu(x)$ for some $x\in M$ and $|s|< 1/\|A_M\|(x)$, then $x$ and $s$ are uniquely determined in $M$ and $[-1/\|A_M\|(x),1/\|A_M\|(x)]$. The second inclusion in \eqref{opening 0} follows because if $y\in   \Om \cap \pa A_k$, then $y$ is the limit of a sequence $x_j+t_j u(x_j)\,\nu(x_j)$ with $t_j\in(0,1)$, $x_j\in M_k$, $t_j\to t_0\in  \left[0,1\right]$ and $x_j\to x_0\in \cl(M_k)$. If $x_0\in\cl(M_k)\setminus M_k\subset\Sigma\cup\pa E\cup\cl(U)  \cup W$, then $u(x_j) \to 0$ and therefore $y=x_0\in K$. If $x_0\in M_k$ then clearly $t_0\in\{0,1\}$: when $t_0=0$, then $y=x_0\in K$; when, instead, $t_0=1$, then $y=x_0+u(x_0)\,\nu(x_0)$ for $x_0\in M_k$ as claimed.

  \medskip

  We prove the inclusion $\supset$ in \eqref{opening -1} by showing that actually
  \begin{equation}
    \label{fix 4}
    \Om\cap\pa F\,\subset\,\,K\cup\Big\{x+u(x)\,\nu(x):x\in M\Big\}\,.
  \end{equation}
  Since the boundary of the union and of the intersection of two sets is contained in the union of the boundaries, and since the boundary of a set coincides with the boundary of its complement, the inclusion $\pa(\cl(A_1))\subset\pa A_1$ gives
  \begin{eqnarray}\nonumber
    \Om\cap\pa F&\subset&\Om\cap\Big(\pa A_0\cup\pa[E\setminus\cl(A_1)]\Big)\,\subset\,
    \Om\cap\Big(\pa A_0\cup\pa E\cup\pa [\R^{n+1}\setminus\cl(A_1)]\Big)
    \\\label{recalling}
    &=&
    \Om\cap\Big(\pa A_0\cup\pa E\cup\pa (\cl(A_1))\Big)\,\subset\,\Om\cap\big(\pa E\cup\pa A_0\cup\pa A_1\big)\,.
  \end{eqnarray}
  Hence, \eqref{fix 4} follows from $\Om\cap\pa E\subset K$ and \eqref{opening 0}.

  \medskip

  We prove \eqref{osf4}, i.e. $\pa F\cap\pa U\subset K\cap\pa U$. Indeed, $M \cap \cl(U) = \emptyset$ and $u(x) < \dist(x,\cl(U))$ for every $x\in M$ give
  \begin{eqnarray*}
  \pa U\cap \big\{x+u(x)\,\nu(x):x\in M\big\}=\emptyset\,,
  \end{eqnarray*}
  so that \eqref{osf4} follows immediately from \eqref{fix 4}.

  \medskip

  Finally, we complete the proof of \eqref{opening -1} by showing the inclusion $\subset$. Since $\cl(U)\cap\pa E=\emptyset$ and
  $M=K \setminus (\Sigma \cup \pa E \cup \cl(U))$, this amounts to show that
  \begin{eqnarray}
    \label{opening 3}
    M\cup \Big\{x+ u(x)\,\nu(x):x\in M\Big\}&\subset&\Om\cap\pa F \setminus \cl(U)\,,
    \\
    \label{fix 2}
    \Sigma\setminus(\pa E \cup \cl(U))&\subset&\Om\cap\pa F\setminus \cl(U)\,,
    \\
    \label{fix 3}
    \Om\cap\pa E&\subset&\Om\cap\pa F\setminus \cl(U)\,.
  \end{eqnarray}
  {\it Proof of \eqref{opening 3}}: Since $M_0\cap(\cl(E) \cup \cl(U) )=\emptyset$, $M_1\subset E$, and $u(x)<\dist(x,\pa E \cup \cl(U))$ for every $x\in M$,
  we find
  \begin{equation}
    \label{opening 1}
    g\big(M_0\times[0,1]\big)\cap(\cl(E) \cup \cl(U) )=\emptyset\,,\qquad     g\big(M_1\times[0,1]\big)\subset E\,.
  \end{equation}
  Let us notice that, if $X,Y\subset\R^{n+1}$, $V\subset\R^{n+1}$ is open, and $X\cap V=Y\cap V$, then $V\cap\pa X=V\cap\pa Y$. We can apply this remark in the open sets $V=E$ and $V=\R^{n+1}\setminus\cl(E)$, together with $A_0\cap\cl(E)=\emptyset$ and $A_1\subset E$ (both consequences of \eqref{opening 1}), the definition of $F=A_0\cup(E\setminus\cl(A_1))$, and $\cl(U)\cap E=\emptyset$, to first deduce that
  \[
  (\pa F)\setminus\cl(E)=(\pa A_0)\setminus\cl(E)\,,\qquad E\cap\pa F=E\cap\pa A_1\,,
  \]
  and then that
  \begin{equation}
    \label{opening 1.1}
      \Big(\big(\pa A_0\big)\setminus(\cl(E) \cup \cl(U)   )  \Big)\,\cup\,\Big(E\cap\pa A_1\Big)\subset\pa F \setminus \cl(U)\,.
  \end{equation}
  Since \eqref{opening 0} and \eqref{opening 1} give
  \begin{equation}
    \label{opening 2}
    g(M_0\times\{0,1\})\subset \Om \cap  \pa A_0\setminus(\cl(E) \cup \cl (U) ) \,,\qquad g(M_1\times\{0,1\})\subset E\cap \pa A_1\,,
  \end{equation}
  we deduce \eqref{opening 3} from \eqref{opening 1.1} and \eqref{opening 2}.

  \medskip

  \noindent {\it Proof of \eqref{fix 2}}: since $M_1=(K\setminus\Sigma)\cap E$ and $\Sigma$ has empty interior in $K$, we find that $\cl(M_1)\cap E=K\cap E$. At the same time, $M\subset\Om\cap\pa F \setminus \cl(U)$ gives $M_1\cap E\subset E\cap\pa F$ and thus $\cl(M_1)\cap E\subset E\cap\pa F$: hence,
  \[
  \Sigma\cap E\,\,\subset \,\, K\cap E\,\,=\,\,\cl(M_1)\cap E\,\,\subset\,\,\Omega\cap\pa F \setminus \cl(U)\,.
  \]
  Setting for the sake of brevity $T=\Om\setminus(\cl(E)   \cup \cl (U))$, so that $T$ is open, we notice that
  $M_0=(K\setminus\Sigma)\cap T$ implies $\Sigma\cap T\subset \Om \cap \cl(M_0)\cap T=K\cap T$, while $M\subset\Om\cap \pa F \setminus \cl(U)$ and $M_0=M\cap T$ give $\Om \cap \cl(M_0) \cap T\,\subset \,T\cap \pa F $; hence
  \[
  \Sigma\setminus(\cl(E) \cup \cl(U) )\,\,\subset \,\,K\setminus(\cl(E) \cup \cl(U) )\,\,\subset\,\,\Om \cap \pa F\setminus(\cl(E)\cup\cl(U) )\,.
  \]
  By combining the last two displayed inclusions, we obtain \eqref{fix 2}.

 \medskip

 \noindent {\it Proof of \eqref{fix 3}}: since $F$ and $E$ coincide in the complement of $\cl(A_0)\cup\cl(A_1)$, and since $\pa E \cap \cl(U)=\emptyset$, we have
  \[
  \Om\cap\pa E\setminus\big(\cl(A_0)\cup\cl(A_1)\big)\,\,=\,\,\Om\cap\pa F\setminus\big(\cl(A_0)\cup\cl(A_1)\cup\cl(U)\big)\,\,\subset\,\,\Om\cap\pa F\setminus\cl(U)\,.
  \]
  Now let $y\in\Om\cap\pa E\cap\cl(A_1)$: since $A_1=g(M_1\times(0,1))$ and $y\not\in g(M_1\times[0,1])$ by \eqref{opening 1}, we find that $y$ is in the closure of $M_1$, and thus of $M$, relatively to $K$: thus $y\in\Om\cap\cl(M)\setminus \cl(U)$; at the same time, by \eqref{opening 3}, we have $M\subset\Om\cap\pa F \setminus \cl(U)$ and thus $\Om\cap\cl(M)\setminus \cl(U)\subset\Om\cap\pa F \setminus \cl(U)$; combining the two facts,
  \[
  \Om\cap\pa E\cap\cl(A_1)\subset \Om\cap\pa F \setminus \cl(U)\,.
  \]
  We argue similarly to show that $\Om\cap\pa E\cap\cl(A_0)\subset\Om\cap\pa F \setminus \cl(U)$ and thus prove \eqref{fix 3}.

  \medskip

  \noindent {\it Step two}. We prove \eqref{osf3}. First, we notice that thanks to \eqref{opening -1}
  \begin{equation} \label{energy estimate opening1}
  \H^n(\Om\cap\pa F \setminus \cl(U))\le\H^n(K \setminus \cl(U))+\H^n\Big(\big\{x+u(x)\,\nu(x):x\in M\big\}\Big)\,.
  \end{equation}
  Since $\dist(x,\Sigma\cup\pa E \cup \cl(U) \cup W)>0$ and $\|A_M\|(x)<\infty$ for every $x\in M$, we find that
  \[
  M_\eta=\big\{x\in M:u(x)=\eta\big\}=\Big\{x\in M\,\colon\,\|A_M\|(x)\le\frac\de\eta\,,\,\dist(x,\Sigma\cup\pa E\cup \cl(U)\cup W)\ge2\eta\Big\}
  \]
  is monotonically increasing towards $M$ as $\eta\to 0^+$. Moreover, $x\mapsto x+u(x)\,\nu(x)=x+\eta\,\nu(x)$ is smooth on $M_\eta$, and if $\k_i$ are the principal curvatures of $M$ with respect to $\nu$,
  \begin{equation} \label{energy estimate opening2}
  \H^n\Big(\big\{x+u(x)\,\nu(x):x\in M_\eta\big\}\Big)=\int_{M_\eta}\prod_{i=1}^n(1+\eta\,\k_i)
  \le(1+\de)^n\,\H^n(M_\eta)\le(1+\de)^n\,\H^n(M)\,.
  \end{equation}
  Letting $\eta\to 0^+$, $g(M_\eta\times\{1\})=\{x+u(x)\,\nu(x):x\in M_\eta\}$ is increasingly converging to $g(M\times\{1\}) = \{x+u(x)\,\nu(x):x\in M\}$, so that \eqref{energy estimate opening2} yields
\begin{equation} \label{estimate M up}
\H^n(\{ x+u(x)\,\nu(x)\,\colon\, x\in M  \}) \leq (1+\de)^n\,\H^n(M)\,,
\end{equation}
  and therefore from \eqref{energy estimate opening1} we deduce
    \begin{equation} \label{energy estimate opening final}
  \limsup_{\eta\to 0^+}\H^n(\Om\cap\pa F \setminus \cl(U))\le\H^n(K \setminus \cl(U))+(1+\de)^n\,\H^n(M)\,.
  \end{equation}
  Finally, \eqref{osf3} follows from \eqref{energy estimate opening final} once we observe that $M=K\setminus(\Sigma\cup\pa E \cup \cl(U))\subset K\setminus(\pa^*E \cup \cl(U) )$, so that
  \begin{eqnarray*}
  \H^n(K\setminus \cl(U))+\H^n(M)&=&\H^n(\Om\cap\pa^*E)+\H^n(K\setminus(\pa^*E \cup \cl(U)))+\H^n(M)
  \\
  &\le&\H^n(\Om\cap\pa^*E)+2\,\H^n(K\setminus(\pa^*E \cup \cl(U) ))\,,
  \end{eqnarray*}
  as required.
\end{proof}

\section{Wetting competitors and exclusion of points of type $Y$} \label{s:noY} By Theorem \ref{proposition Lipschitz minimizing}, $K$ is an Almgren minimal set in $\Om\setminus\cl(E)$. As we shall see in the next section, this property is compatible with $K$ containing $(n-1)$-dimensional submanifolds of $Y$-points, that is, points such that $K$ is locally diffeomorphic to a cone of type $Y$ in $\R^{n+1}$. The goal of this section is to show that, for reasons related to the specific properties of the variational problem $\psi(\e)$, such points cannot exist. As explained in detail in the next section, this bit of information will prove crucial in closing the proof of Theorem \ref{t:main}.

\begin{theorem}\label{prop:noYx}
    If $(K,E)$ is a generalized minimizer of $\psi(\e)$, then there cannot be $x_0\in K\setminus\cl(E)$ such that there exist $\a\in(0,1)$, a $C^{1,\a}$-diffeomorphism $\Phi:\R^{n+1}\to\R^{n+1}$ with $\Phi(0)=x_0$, and $r_0>0$ such that, setting $A=\Phi(B_{r_0})$,
    \begin{equation} \label{K is a perturbation of Y}
    \Phi\Big((\bY^1\times\R^{n-1})\cap B_{r_0}\Big)=A\cap K\,,
    \end{equation}
    and, in addition,
    \begin{equation} \label{e:properties of diffeo}
    \nabla\Phi(0)=\Id\,,\qquad
    \| \nabla \Phi - \Id \|_{C^0(B_r)} \leq C\, r^\a\,,\qquad\forall r<r_0\,.
    \end{equation}
\end{theorem}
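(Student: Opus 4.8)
I would argue by contradiction: suppose such an $x_0$ exists. Since $x_0\in K\setminus\cl(E)$ I may shrink $r_0$ so that $A=\Phi(B_{r_0})$ is disjoint from $\cl(E)\cup W$; by \eqref{theta density} the varifold multiplicity is then $\theta\equiv 2$ on $A\cap K$. Working in the $\Phi$-straightened coordinates, $K$ coincides near $x_0$ with the doubly covered cone $\bY^1\times\R^{n-1}$, and since $\nabla\Phi(0)=\Id$ the map $\Phi$ is $(1+\mathrm{o}(1))$--bi-Lipschitz on $B_{C\de}(x_0)$ as $\de\to0^+$ (indeed $(1+\mathrm{O}(\de^\a))$--bi-Lipschitz, by \eqref{e:properties of diffeo}). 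I also record that, since $K\setminus\cl(E)\neq\emptyset$, the Lagrange multiplier of $(K,E)$ satisfies $\l\le0$ by \cite{kms2}. The plan is to construct, for all small $\de>0$, a competitor $(K_\de,E_\de)\in\KK$ with $|E_\de|=\e$ and $\F(K_\de,E_\de)<\psi(\e)=\F(K,E)$, which is impossible: $(K_\de,E_\de)$ will be obtained from $(K,E)$ by a local \emph{wetting} modification near $x_0$ (which lowers $\F$ by an amount $\sim\de^{n}$ while raising $|E|$ by only $\sim\de^{n+1}$), followed by a volume-restoring diffeomorphism localized at a smooth point of $\pa^*E$ (which raises $\F$ by only $\sim|\l|\,\de^{n+1}$).

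\medskip

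\noindent\textbf{The wetting modification.} Fix a small slab thickness $L=c\,\de$ along the spine $\{0\}\times\R^{n-1}$, and a radial retreat profile $\rho=\rho(w)\lesssim\de$ with $\rho>0$ for $|w|<L/2$ and $\rho(\pm L/2)=0$. Inside $B_{C\de}(x_0)$ replace the three multiplicity-$2$ half-hyperplanes of $\bY^1\times\R^{n-1}$ near the spine by: (i) their truncations obtained by retreating radially by $\rho(w)$, kept with multiplicity $2$; and (ii) the boundary of a thin ``triangular prism'' $E'$ inserted along the spine, whose cross-section at spine parameter $w$ is the equilateral triangle having its three vertices on the three rays of $\bY^1$ at distance $\rho(w)$, kept with multiplicity $1$. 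Set $K_\de^{(0)}$ equal to $K$ so modified, and $E_\de^{(0)}=E\cup E'$ (a disjoint union, since $x_0\notin\cl(E)$). The crucial elementary computation is that each cross-sectional triangle has perimeter $3\sqrt3\,\rho<6\,\rho$, whereas the removed doubly covered material has weighted length $6\,\rho$; hence the weighted $\H^n$-measure strictly decreases, and (the $\Phi$-distortion contributing only a harmless factor $1+\mathrm{o}(1)$) one obtains $\F(K_\de^{(0)},E_\de^{(0)})\le\F(K,E)-c_1\,\de^{n}$, while $|E_\de^{(0)}|=\e+v_\de$ with $v_\de=|E'|\le C\,\de^{n+1}$. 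One checks that $(K_\de^{(0)},E_\de^{(0)})\in\KK$ and, crucially, that $K_\de^{(0)}$ is still $\C$-spanning $W$: outside $B_{C\de}(x_0)$ it coincides with $K$, and inside, arguing as in the proof of Theorem \ref{proposition Lipschitz minimizing} (via \cite[Lemma 2.2]{kms}), any $\g\in\C$ whose trace meets $K$ only inside the ball must cross $K_\de^{(0)}$, because $B_{C\de}(x_0)\setminus K_\de^{(0)}$ separates the arcs of $\partial B_{C\de}(x_0)\setminus K$ at least as well as $B_{C\de}(x_0)\setminus K$ does --- the inserted prism $E'$ exactly ``plugs'' the region vacated by the truncation. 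Note that $K_\de^{(0)}$ is \emph{not} a Lipschitz image of $K$ (it has different topology), which is precisely why this construction is not precluded by the Almgren minimality established in Theorem \ref{proposition Lipschitz minimizing}.

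\medskip

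\noindent\textbf{Volume restoration and conclusion.} By Theorem \ref{thm basic regularity}-(ii) choose $p\in\pa^*E$ at which $\pa^*E$ is a smooth hypersurface, with $p$ away from $W$ and from $x_0$, and a vector field $X\in C^1_c(\R^{n+1};\R^{n+1})$ supported near $p$, with $X\cdot\nu_\Om=0$ on $\pa\Om$ and $\int_{\pa^*E}X\cdot\nu_E\,d\H^n=-1$. Flowing $(K_\de^{(0)},E_\de^{(0)})$ along $X$ for the unique small time $t_\de=v_\de+\mathrm{o}(v_\de)$ restores the volume to $\e$; let $(K_\de,E_\de)\in\KK$ be the result. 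It is still $\C$-spanning, since pushing a $\C$-spanning set forward by a diffeomorphism of $\Om$ isotopic to the identity preserves the spanning property. Since near $p$ the pair $(K_\de^{(0)},E_\de^{(0)})$ coincides with $(K,E)$, the stationarity identity \eqref{stationary main} gives that the first variation of $\F$ in the direction $X$ equals $\l\int_{\pa^*E}X\cdot\nu_E\,d\H^n=-\l=|\l|$, whence $\F(K_\de,E_\de)\le\F(K_\de^{(0)},E_\de^{(0)})+|\l|\,v_\de+\mathrm{o}(v_\de)$. Combining, $\F(K_\de,E_\de)\le\psi(\e)-c_1\,\de^{n}+C|\l|\,\de^{n+1}+\mathrm{o}(\de^{n+1})<\psi(\e)$ once $\de$ is small enough. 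On the other hand, arguing exactly as in the proof of Theorem \ref{proposition Lipschitz minimizing} --- approximating $(K_\de,E_\de)$ by sets in $\E$ via Lemma \ref{l:one_sided_fattening} (note that $K_\de$, like $K$, is a smooth orientable hypersurface off a closed $\H^n$-null subset, namely the union of $\Sigma$ with the new triple-junctions and edges produced by the construction) and invoking the lower semicontinuity of $\psi$ --- one gets $\psi(\e)\le\F(K_\de,E_\de)$, a contradiction.

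\medskip

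\noindent\textbf{Main obstacle.} The only delicate step is the wetting modification of the second paragraph: one must exhibit a local competitor that (a) strictly lowers the weighted $\H^n$-measure --- which rests on the numerical fact that resolving a doubly covered $Y$-junction into a singly covered thin region is energetically favorable ($3\sqrt3<6$), exploiting the multiplicity-$2$ of $K$ near $x_0$ --- while (b) only increasing the enclosed volume by $\mathrm{O}(\de^{n+1})$, and (c) preserving the $\C$-spanning condition. Item (c) in particular requires care precisely because the competitor is genuinely non-Lipschitz, so one cannot appeal to the $\C$-spanning transfer argument from \cite{DLGM} used for Lipschitz images; instead one verifies it by hand from the simple separation geometry inside the ball.
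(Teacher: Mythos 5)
Your strategy is the same as the paper's: argue by contradiction by building a wetting competitor near the $Y$-point that gains $c\,\de^n$ of weighted area at the cost of $O(\de^{n+1})$ of volume, restore the volume at a smooth point of $\pa^*E$ at a cost of $|\l|\,O(\de^{n+1})$ in area, and check the $\C$-spanning condition. The executions differ in two places worth comparing. First, the shape of the wetted region: you retreat the three sheets by $\rho$ and insert a prism over the \emph{inscribed} straight triangle (perimeter $3\sqrt3\,\rho$ versus $6\rho$ removed), whereas the paper takes a curvilinear triangle $Y^*_\de$ that \emph{contains} $\bY^1\cap B_\de$ and touches $\pa B_\de$ only at the three cone points. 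Both give a strictly positive $2$-d gain, but the paper's choice buys two things you have to work harder for: (i) since $K\cap\Phi(U_{r_0})\subset\Phi(\Delta_{r_0})$, the spanning verification reduces to ``$\g$ enters the contractible open set $\Phi(\Delta_{r_0})$ and must exit it, hence crosses $\pa\Phi(\Delta_{r_0})$'', and the one-sided fattening can be performed with $U=\Phi(U_{r_0})$ excluded so that $\pa\Phi(\Delta_{r_0})\subset\pa G$ is immediate; in your version the truncated sheets survive inside the ball, abut the prism along its edges, and must themselves be fattened, so you need to check by hand both that $\pa E'$ is not swallowed by the fattening and that curves meeting $K$ only within distance $\rho$ of the spine still cross $\pa E'$ --- your ``separates at least as well'' is the right idea but is the step that needs to be written out. (ii) The taper: with $L=c\,\de$ and $\rho\lesssim\de$ your profile may have $|\rho'|\sim 1$, and the coarea factor $(1+|\rho'|^2)^{(n-1)/2}$ multiplying the cross-sectional perimeter can then wipe out the gain $6-3\sqrt3$ for large $n$; you need $\rho\le\tau\,\de$ with $|\rho'|\le 2\tau$ and $\tau=\tau(n)$ small, exactly the paper's dampening function $g$.

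A second, more minor, difference: you assemble a pair $(K_\de,E_\de)\in\KK$ and then invoke an approximation by sets of $\E$ plus lower semicontinuity of $\psi$ to conclude $\psi(\e)\le\F(K_\de,E_\de)$, while the paper short-circuits this by producing directly a single open set $G=\Phi(\Delta_{r_0})\cup\bigl(A_0\cup(E_t\setminus\cl(A_1))\bigr)\in\E$ with $|G|=\e$, so that $\psi(\e)\le\H^n(\Om\cap\pa G)$ with no limit needed. Your volume restoration by flowing a vector field is equivalent to the paper's explicit removal of a lens under the graph of $\pa^*E$ (both cost $|\l|\,t+o(t)$), but note that the correct volume to remove must also account for the $O(\eta)$ volumes $|A_0|,|A_1|$ of the one-sided neighborhoods, as in the paper's choice $t=|\Phi(\Delta_{r_0})|+|A_0|-|A_1|$. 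None of these points is a wrong idea, but the spanning/boundary bookkeeping in your geometry is the one step I would not accept without a written argument.
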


\begin{proof} The proof is achieved by showing that, up to decrease the value of $r_0$, there exist a constant $c_{\bY}=c_{\bY}(n)>0$ and a set $G \subset \Omega$ with
\begin{equation} \label{contra:the set G}
G \in \E\,, \quad |G|=\eps\,, \quad \mbox{$\Om \cap \pa G$ is $\C$-spanning $W$}\,,
\end{equation}
such that
\begin{equation} \label{contra:the estimate}
\H^n(\Om \cap \pa G) \leq \F(K,E) - c_{\bY}\,r_0^n\,.
\end{equation}
The set $G$, defined in \eqref{the competitor!} below, is constructed in three stages, that we introduce as follows. We pick $x^*\in\Om\cap\pa^*E$, so that, by Theorem \ref{thm basic regularity}, we can find an open cylinder $Q^*$ of height and radius $r^*>0$, centered at $x^*$, and with axis along $\nu_E(x^*)$, such that $E\cap Q^*$ is the subgraph of a smooth function $u$ defined over the cross section $D^*$ of $Q^*$, and such that the graph of $u$ has mean curvature $\l$ in the orientation induced by $\nu_E(x^*)$ (here $\l$ is the Lagrange multiplier of $(K,E)$).
\begin{figure}
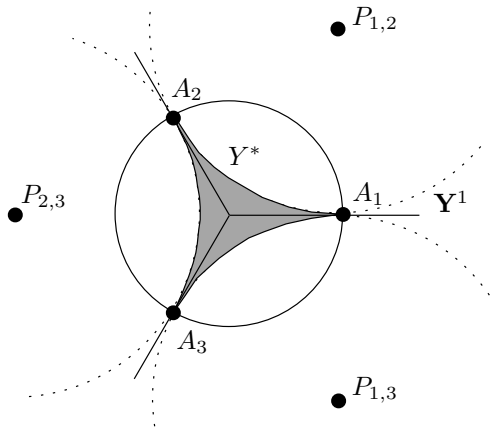
\caption{\small{The construction in step one of the proof of Theorem \ref{prop:noYx}.}}
\label{fig ystar}
\end{figure}
Up to decrease $r_0$ and $r_*$, we can make sure that $A=\Phi(B_{r_0})$ and $Q^*$ lie at positive distance, so that modification of $(K,E)$ compactly supported in these two regions will not interact. We then argue in three stages: in the first stage (first three steps of the proof), we modify $(K,E)$ by replacing the collapsed surface $K\cap A$ with an open set of the form $\Phi(\Delta_{r_0})$, where $\Delta_{r_0}\subset B_{r_0}$ is constructed so to achieve an ${\rm O}(r_0^n)$-gain in area, at the cost of an ${\rm O}(r_0^{n+1})$-increase in volume -- this is possible, of course, only because we are assuming that $x_0$ is a $Y$-point; in the second stage (step four of the proof), we construct a one-parameter family of modifications $\{E_t\}_{t\in(0,t_0)}$ of $E$, all supported in $Q^*$, with $|E_t|=|E|-t$, and such that the area increase from $\pa E$ to $\pa E_t$ is of order ${\rm O}(t)$; in the final stage, we apply Lemma \ref{l:one_sided_fattening} to the element of $\KK$ constructed in stage one, and then use the volume-fixing variation of stage two to create the competitor $G$ which will eventually give the desired contradiction.

\medskip

\noindent {\it Step one:} There exist positive constants $v_0$ and $c_0$, such that for every $\delta >0$, there is an open subset $Y^*_\delta \subset B_\delta \subset \R^2$ such that
\begin{eqnarray} \label{2D:geometry}
\bY^1 \cap B_\delta \subset Y^*_\delta\,, &\qquad &  \cl(Y^*_\delta) \cap \pa B_\delta = \bY^1 \cap \pa B_\delta\,,
\\
 \label{e:2D gain}
 \quad |Y^*_\delta| = v_0 \,\delta^2\,, & \qquad & \H^1(\pa Y^*_\delta ) = 2\, \H^1(\bY^1 \cap B_\delta) - c_0\,\delta\,.
\end{eqnarray}
This results from an explicit construction, see Figure \ref{fig ystar}. By scale invariance of the statement, we can assume that $\delta=1$.  Let $\{A_i\}_{i=1}^3 = \bY^1 \cap \pa B_1$, and let $\{P_{1,2}, P_{2,3}, P_{1,3}\}$ be defined as follows: for $i,j \in \{1,2,3\}$, $i < j$, $P_{i,j}$ is the intersection of the straight lines $\ell_i$ and $\ell_j$ tangent to $\pa B_1$ and passing through $A_i$ and $A_j$, respectively. We also let $S_{i,j}$ be the closed disc sector centered at $P_{i,j}$ and corresponding to the arc $A_iA_j$. Finally, we define
\begin{equation} \label{Y-competitor}
Y^*=Y^*_1= B_1 \setminus \bigcup_{i<j} S_{i,j}\,.
\end{equation}
It is easily shown that \eqref{2D:geometry} holds with
\begin{equation} \label{exact values}
v_0=\frac{3}{2}(2\,\sqrt{3}-\pi)\,, \qquad c_0 = 6-\pi\,\sqrt{3}\,.
\end{equation}

\medskip

\noindent {\it Step two:} We adapt to higher dimensions the construction of step one, see the set $\Delta_{r_0}$ defined in \eqref{deltar0} below. We assign coordinates $x = (z,y) \in \R^2\times\R^{n-1}$ to points $x \in \R^{n+1}$, so that $(0,y)$ is the component of the vector $x$ along the spine of the cone $\bY^1 \times \R^{n-1} \subset \R^2 \times \R^{n-1}$, and $|z|$ is the distance of $x$ from the spine. We observe that, if $\mathbf{p} \colon \R^{n+1} \to \R^{n-1}$ denotes the orthogonal projection operator onto the spine, then the \emph{slice} of $B_{r_0}$ with respect to $\mathbf{p}$ at $y \in \R^{n-1}$ is given by
\begin{equation} \label{fading disks}
B_{r_0} \cap \mathbf{p}^{-1}(y) =
\begin{cases}
     (0,y) + B^2_{\sqrt{r_0^2-|y|^2}}          & \mbox{if $|y| < r_0$} \\
 \emptyset              & \mbox{otherwise}\,,
\end{cases}
\end{equation}
where $B^2_{\rho}$ is the disc of radius $\rho$ in $\R^2 \times \{0\}$. Analogously, the slice of $(\bY^1 \times \R^{n-1}) \cap B_{r_0}$ with respect to $\p$ at $y$ is
\begin{equation} \label{fading Ys}
(\bY^1 \times \R^{n-1}) \cap B_{r_0} \cap \mathbf{p}^{-1}(y) =
\begin{cases}
(0,y) + \bY^1 \cap B^2_{\sqrt{r_0^2-|y|^2}} & \mbox{if $|y| < r_0$}\\
\emptyset &\mbox{otherwise}\,.
\end{cases}
\end{equation}
For $\tau \in \left(0,1/2\right)$ to be chosen later, we pick $g \in C^\infty_{c}(\left[0,\infty\right))$ such that
\begin{eqnarray*}
g \equiv \tau \;\; \mbox{in $\left[ 0,\tau \right]$}\,, & \qquad & g \equiv 0 \;\; \mbox{in $\left[1,\infty\right)$}\,,\\
0<g(t) \leq \sqrt{1-t^2} \;\; \mbox{in $\left[0,1\right)$}\,, & \qquad & g' \leq 0 \mbox{ and }|g'|\leq 2\,\tau \;\;\mbox{everywhere}\,,
\end{eqnarray*}
and set $g_{r_0}(s) = r_0\,g(s/r_0)$, see
\begin{figure}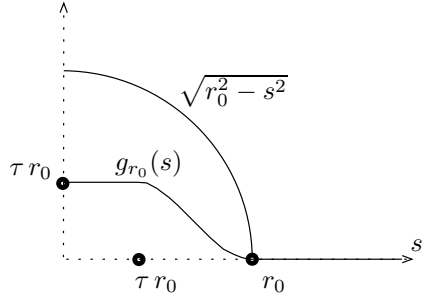\caption{\small{The dampening function $g_{r_0}$. The set $U_{r_0}$ is obtained by rotating the graph of $g_{r_0}$ around the spine of $\bY^1\times\R^{n-1}$.}}\label{fig gr0}\end{figure}
Figure \ref{fig gr0}. Next, let $U_{r_0}$ define the open tubular neighborhood
\begin{equation}
U_{r_0} = \Big\{  x=(z,y) \in \R^{n+1} \,\colon\, \mbox{$|y|<r_0$ and $|z| < g_{r_0}(|y|)$}   \Big\}\,,
\end{equation}
and notice that $U_{r_0} \subset B_{r_0}$ by \eqref{fading disks} and the properties of $g$. Finally, we define the set
\begin{equation}\label{deltar0}
\Delta_{r_0} = \Big\{ x=(z,y) \in \R^{n+1} \, \colon \, \mbox{$|y|<r_0$ and $z \in Y^*_{g_{r_0}(|y|)}$} \Big\}\,.
\end{equation}
We claim that $\Delta_{r_0}$ is an open subset of $U_{r_0}$ (thus of $B_{r_0}$), with $\H^n$-rectifiable boundary, and such that
\begin{eqnarray}
  \label{first containment}
(\bY^1 \times \R^{n-1}) \cap U_{r_0}&\subset&\Delta_{r_0}\,,
\\
\label{Delta volume}
|\Delta_{r_0}| &\le& C(n)\,v_0\,r_0^{n+1}\,,
\\
\label{Delta perimeter deficit}
\H^n(\pa\Delta_{r_0})&\le& 2\,\H^n\Big((\bY^1\times\R^{n-1}) \cap U_{r_0}\Big)-C(n)\,c_0\,r_0^n\,.
\end{eqnarray}
Only \eqref{Delta volume} and \eqref{Delta perimeter deficit} require a detailed proof. To compute the volume of $\Delta_{r_0}$, we apply Fubini's theorem, step one, and the definition of $g_{r_0}$, to get
\[
|\Delta_{r_0}| =  \int_{B_{r_0}^{n-1}} |Y^*_{g_{r_0}(|y|)}| \, dy = v_0\, \int_{B_{r_0}^{n-1}} g_{r_0}(|y|)^2\,dy = v_0 \, r_0^{n+1}\, \int_{B_1^{n-1}} g(|w|)^2 \, dw\,.
\]
Similarly, we use the coarea formula (see e.g. \cite[Theorem 2.93 and Remark 2.94]{AFP}) to write
\begin{equation} \label{coarea formula}
\begin{split}
\H^n(\pa \Delta_{r_0}) &- 2\, \H^n((\bY^1\times\R^{n-1}) \cap U_{r_0})
\\
&= \int_{B_{r_0}^{n-1}}\,dy \int_{\pa Y^*_{g_{r_0}(|y|)}} \frac{d\H^1(z)}{\mathrm{C}_{n-1}(\nabla^{\pa\Delta_{r_0}}\mathbf{p}(z))}  - 2\,\int_{B_{r_0}^{n-1}} \H^1(\bY^1 \cap B^2_{g_{r_0}(|y|)}) \, dy
\end{split}
\end{equation}
where $\nabla^{\pa\Delta_{r_0}}\mathbf{p}$ is the tangential gradient of $\mathbf{p}$ along $\pa \Delta_{r_0}$, and where $\mathrm{C}_{n-1}(L)$ is the $(n-1)$-dimensional coarea factor of a linear map $L \colon \R^n \to \R^{n-1}$. Standard calculations show that, for every $y \in B_{r_0}^{n-1}$,
\[
\mathrm{C}_{n-1}(\nabla^{\pa\Delta_{r_0}} \mathbf{p}(z)) = \left(  1 + |g_{r_0}'(|y|)|^2 \right)^{-\frac{n-1}{2}}\qquad \mbox{for $\H^1$-a.e. $z \in \pa Y^*_{g_{r_0}(|y|)}$}\,,
\]
so that \eqref{coarea formula} allows to estimate
\begin{eqnarray*}
\H^n(\pa \Delta_{r_0}) & - & 2\, \H^n((\bY^1\times\R^{n-1}) \cap U_{r_0})  \\
&\leq & \int_{B_{r_0}^{n-1}} \left(  ( 1 + 4\,\tau^2  )^{\frac{n-1}{2}} \, \H^1(\pa Y^*_{g_{r_0}(|y|)}) - 2\,\H^1(\bY^1 \cap B^2_{g_{r_0}(|y|)}) \right) \,dy\\
&=& \left(  (1+4\,\tau^2)^{\frac{n-1}{2}} \,\H^1(\pa Y^*_1) - 2\, \H^1(\bY^1 \cap B_1^2)  \right) \, r_0^n \, \int_{B_1^{n-1}} g(|w|) \, dw \,,
\end{eqnarray*}
and thus by \eqref{e:2D gain} and provided $\tau$ is sufficiently small depending on $n$, $c_0$ and $\H^1(\bY^1\cap B_1^2)$,
\[
\H^n(\pa \Delta_{r_0})  -  2\, \H^n((\bY^1\times\R^{n-1}) \cap U_{r_0}) \leq  -  \frac{c_0}{2} \,r_0^n\, \int_{B_1^{n-1}} g(|w|) \, dw\,,
\]
which gives \eqref{Delta perimeter deficit}.

\medskip

\noindent {\it Step three:} By step two and the properties of $\Phi$, we have that $\Phi(\Delta_{r_0})$ is an open subset of $\Phi(U_{r_0}) \subset \Phi(B_{r_0})=A$ with $\H^n$-rectifiable boundary $\pa\Phi(\Delta_{r_0})=\Phi(\pa\Delta_{r_0})$, and such that
\begin{equation} \label{refined containment}
K \cap \Phi(U_{r_0})=\Phi\Big((\bY^1\times\R^{n-1})\cap U_{r_0}\Big) \subset \Phi(\Delta_{r_0})
\end{equation}
thanks to \eqref{first containment}.
Moreover, up to possibly taking a smaller value for $r_0$, \eqref{Delta volume} and \eqref{Delta perimeter deficit}, together with the area formula and \eqref{e:properties of diffeo}, guarantee the existence of constants $v_{\bY} = v_{\bY}(n) > 0$ and $c_{\bY} = c_{\bY}(n) >0$ such that
\begin{align} \label{Phi Delta volume}
|\Phi(\Delta_{r_0})| \;\; &\le \;\; v_{\bY}\,r_0^{n+1}\,, \\ \label{Phi Delta perimeter deficit}
\H^n(\pa (\Phi(\Delta_{r_0}))) - 2\, \H^n(K \cap \Phi(U_{r_0})) \;\; &\leq \;\;  - 3\,c_{\bY} \, r_0^n\,.
\end{align}

\medskip

\noindent {\it Step four:} We recall the following construction from \cite[Proof of Theorem 2.8]{kms2}; see
\begin{figure}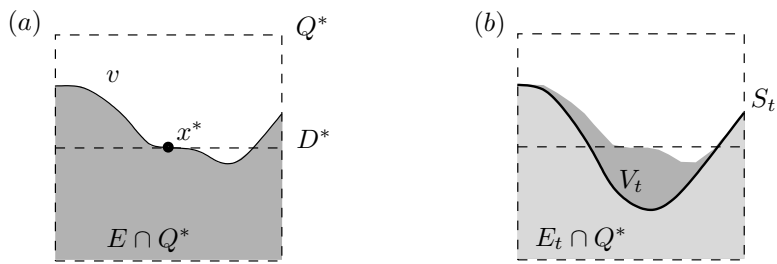\caption{\small{The volume-fixing variations constructed in step four. The surface $S_t$ has been depicted with a bold line.}}\label{fig Vt}\end{figure}
Figure \ref{fig Vt}. Fix a point $x^*\in\Omega \cap \pa^*E$, and let $\nu^*=\nu_E(x^*)$. Theorem \ref{thm basic regularity} guarantees then the existence of a radius $r^* > 0$ such that, denoting by $Q^*$ the cylinder of center $x^*$, axis $\nu^*$, height and radius $r^*$, and by $D^*$ its $n$-dimensional cross-section passing through $x^*$, we have $\cl(Q^*) \cap \cl(\Phi(B_{r_0}))=\emptyset$ and
 \begin{eqnarray} \label{cilindro filling}
    E \cap \cl(Q^*)&=&\Big\{z+h\,\nu^*\,\colon\, z\in \cl(D^*)\,, \; -r^*\leq h<v(z)\Big\}\,, \label{cilindro set}
    \\
    K\cap\cl(Q^*)=\pa E\cap\cl(Q^*)&=&\Big\{z+v(z)\,\nu^*\,\colon\, z\in \cl(D^*)\Big\}\,,
  \end{eqnarray}
  for a smooth function $v \colon \cl(D^*) \to \R$ solving, for $\lambda \le 0$ (the non-positivity of $\lambda$ is not important here, but it holds thanks to the main result in \cite{kms2}),
  \begin{equation}
    \label{pa E is cmc}
      -\Div\bigg(\frac{\nabla v}{\sqrt{1+|\nabla v|^2}}\bigg)=\l\quad\mbox{on $D^*$}\,,\qquad
  \max_{\cl(D^*)}|v|\le \frac{r^*}2\,.
  \end{equation}
  We choose a smooth function $w \colon\cl(D^*)\to\R$ with
  \begin{equation} \label{volume fixing variation}
  w=0\quad\mbox{on $\pa D^*$}\,,\qquad
  w>0\quad\mbox{on $D^*$}\,,\qquad\int_{D^*}w=1\,,
  \end{equation}
  and then define, for $t>0$, an open set $V_t$ by setting
  \begin{equation} \label{cave}
  V_t=\Big\{ z+h\,\nu^*\,\colon\, z\in D^*\,,v(z)-t\,w(z) < h < v(z)\Big\}\,.
  \end{equation}
  For $t$ small enough (depending only on $r^*$ and on the choice of $w$) we have that $V_t\subset E \cap Q^*$, with
  \begin{equation}
  \pa V_t\cap\pa Q^*=K\cap \pa Q^*=\left\lbrace z+v(z)\,\nu^*\,\colon\, z\in \pa D^*\right\rbrace\,.
  \end{equation}
Furthermore, if we let $S_t$ denote the closed set
\begin{equation} \label{d:S_surface}
S_t = \Big\{ z + (v(z) - t w(z))\, \nu^* \, \colon \, z \in \cl(D^*) \Big\}\,,
\end{equation}
  it is easily seen that for $t<t_0$
  \begin{equation}
    \label{volume fixing volume and perimeter}
      |V_t|=t\,,\qquad \H^n(S_t) = \H^n (S_t \cap \cl(Q^*)) = \H^n(\pa E \cap \cl(Q^*)) - \lambda\, t + {\rm O}(t^2)\,,
  \end{equation}
  where we have used $\int_{D^*} w=1$, $w=0$ on $\pa D^*$, and \eqref{pa E is cmc}. In particular, setting
  \begin{equation} \label{E deflated}
   E_t = E \setminus \cl(V_t)\,,
  \end{equation}
we have (see \cite[Equation (3.37)]{kms2})
\begin{equation} \label{properties of Et}
|E_t| = |E| -t\,, \qquad   \pa E_t \cap \cl(Q^*) = S_t\,,
\end{equation}
so that \eqref{volume fixing volume and perimeter} reads
\begin{equation} \label{perimeter estimate bulk}
\H^n ( \pa E_t \cap\cl( Q^*) ) = \H^n(\pa E \cap \cl(Q^*)) + |\lambda|\,t + {\rm O}(t^2)\,.
\end{equation}
Finally, if needed, we further reduce the value of $r_0$ (this time also depending on $|\l|$) in order to entail
\begin{equation} \label{fixing radius}
\H^n(\pa E_t \cap \cl(Q^*)) \leq \H^n(\pa E \cap \cl(Q^*)) + c_{\bY}\,r_0^n \qquad\forall\,t \leq 2\,v_{\bY}\,r_0^{n+1}\,.
\end{equation}

\medskip

\noindent {\it Step five:} Without loss of generality, we can assume that $r^*<\dist(x^*, \cl(K) \setminus \pa^*E)/2$. In this way, provided $\eta$ is small enough in terms of $r^*$, we can enforce that
\begin{equation}
  \label{oddio}\mbox{$I_\eta(\cl(K)\setminus \pa^*E)$ and $\cl(Q^*)$ lie at positive distance.}
\end{equation}
We thus apply the construction of Lemma \ref{l:one_sided_fattening} to $(K,E)$ with the open set $U = \Phi(U_{r_0})$, and correspondingly define the function $u$ and the sets $M_0,M_1,M,A_0,A_1$. After choosing $\delta$ and $\eta$ sufficiently small in terms of $r_0$, we can achieve
\begin{align} \label{condition1}
&(\cl (A_0) \cup \cl(A_1)    ) \cap \cl (Q^*) = \emptyset\,, \qquad   |A_0| + |A_1| \leq \frac{v_{\bY}\,r_0^{n+1}}{4}\\ \label{condition2}
&\H^n\left( \{  x + u(x)\,\nu(x) \, \colon \, x \in M     \}  \right) \leq \H^n(M) + c_{\bY}\,r_0^n\,.
\end{align}
Since $\cl(A_0)\cup\cl(A_1)\subset I_\eta(K \setminus \pa E)$, the first condition in \eqref{condition1} is immediate from \eqref{oddio}, while the second condition follows from $|I_\eta(K)|\to|\cl(K)|=0$ as $\eta\to 0^+$. Finally, \eqref{condition2} is satisfied for $\delta$ sufficiently small (in terms of $r_0$, $n$ and $\H^n(K)$) thanks to \eqref{estimate M up}.

\medskip

\noindent {\it Step six:} We apply step four  with
\begin{equation} \label{volume deficit}
t = |\Phi(\Delta_{r_0})| + |A_0| - |A_1| \in \left( 0, 2\,v_{\bY}\, r_0^{n+1} \right]\,.
\end{equation}
In particular, \eqref{fixing radius} holds for the corresponding set $E_t$, and we can finally define the competitor
\begin{equation} \label{the competitor!}
G =  \Phi(\Delta_{r_0}) \cup F\,, \qquad \mbox{where $F=A_0 \cup \left( E_t \setminus \cl(A_1) \right)$}\,.
\end{equation}
We verify now that $G$ satisfies the properties \eqref{contra:the set G} and \eqref{contra:the estimate}. First, we observe that $\Phi (\Delta_{r_0}) \subset \Phi (U_{r_0})$, whereas, by Lemma \ref{l:one_sided_fattening} and given that $E_t \subset E$, one has $F \subset \Om \setminus \cl(\Phi(U_{r_0}))$, so that $\Phi(\Delta_{r_0})$ and $F$ are two \emph{disjoint} open subsets of $\Omega$. In particular, $G \subset \Om$ is open and, as a consequence of \eqref{properties of Et} and \eqref{volume deficit},
\begin{equation} \label{volume of competitor}
\begin{split}
|G| = |\Phi(\Delta_{r_0})| + |F| \;&=\; |\Phi(\Delta_{r_0})| + |A_0| + |E_t| - |\cl (A_1)|\\
&=\; |\Phi(\Delta_{r_0})| + |A_0| - |\cl(A_1)| + |E| - t \\
&=\, |E|\,.
\end{split}
\end{equation}
Since $\pa G\subset\pa[\Phi(\Delta_{r_0})]\cup\pa F$, recalling the last inclusion in \eqref{recalling} (which in the present case holds with $E_t$ in place of $E$) and noticing that $\cl(\Phi(\Delta_{r_0})) \subset \Omega$, we obtain
\begin{equation} \label{first inclusion}
\Om\cap\pa G\,\, \subset\,\,\pa \Phi(\Delta_{r_0})\cup\Big\{\Om\cap\Big(\pa A_0  \cup \pa A_1 \cup \pa E_t\Big)\Big\}\,.
\end{equation}
and, in particular, $\pa G$ is $\H^n$-rectifiable. Moreover, for $k=0,1$, by \eqref{opening 0} and by $I_\eta(K  \setminus \pa E)\cap \cl(Q^*)=\emptyset$, we get
\[
\Om \cap \pa A_k  \subset  K \setminus (  \Phi(U_{r_0}) \cup   Q^*   ) \cup \Big\{ x + u(x)\,\nu(x)\,\colon\,x \in M_k \Big\}
\]
while
\begin{eqnarray*}
\Om \cap \pa E_t & \subset & [\Om \cap \pa E_t \cap \cl (Q^*)] \cup [(\Om \cap \pa E) \setminus \cl(Q^*)]
 \\
 &\subset&S_t \cup [K \setminus (\Phi (U_{r_0}) \cup Q^*)]
\end{eqnarray*}
so that the $\subset$-inclusion in the following identity
\begin{equation} \label{boundary identity}
\Om \cap \pa G = \pa \Phi(\Delta_{r_0}) \cup (K \setminus (  \Phi(U_{r_0})  \cup Q^*  )) \cup  S_t \cup \Big\{ x + u(x)\,\nu(x)\,\colon\, x \in M \Big\}
\end{equation}
follows from \eqref{first inclusion}. To complete the proof of \eqref{boundary identity} we will show that
\begin{eqnarray} \label{2inclusion:Delta}
\pa \Phi(\Delta_{r_0}) & \subset & \Om \cap \pa G\,,\\ \label{2inclusion:bdry ball}
K \cap \pa (\Phi(U_{r_0})) &\subset & \Om \cap \pa G\,, \\ \label{2inclusion:M}
M \cup \big\{  x + u(x)\,\nu(x) \, \colon \, x \in M \big\}&\subset & \Om \cap \pa G\,, \\ \label{2inclusion:singular}
\Sigma \setminus ( \pa E \cup \cl (\Phi (U_{r_0})) ) &\subset & \Om \cap \pa G\,, \\\label{2inclusion:bdry set out cyl}
(\Om \cap \pa E) \setminus \cl(Q^*) &\subset & \Om \cap \pa G\,, \\  \label{2inclusion:S}
S_t &\subset & \Om \cap \pa G\,.
\end{eqnarray}
\textit{Proof of \eqref{2inclusion:Delta}:} it readily follows from the fact that $\cl(\Phi(\Delta_{r_0})) \subset \Om \cap \cl (G)$ together with $F\cap \cl (\Phi(\Delta_{r_0}))=\emptyset$. \textit{Proof of \eqref{2inclusion:bdry ball}:} since $K \cap \pa (\Phi(U_{r_0})) \subset \Om \setminus G$, we only have to prove that $K \cap \pa (\Phi(U_{r_0})) \subset \cl (G)$. Since $K \cap \cl (\Phi(U_{r_0})) = \Phi(\bY^1\times\R^{n-1}) \cap \cl (\Phi(U_{r_0}))$, any $x \in K \cap \pa (\Phi(U_{r_0}))$ is a limit of points $\Phi(z_h)$ with $z_h \in (\bY^1 \times \R^{n-1}) \cap U_{r_0} \subset \Delta_{r_0}$ by \eqref{first containment}. In particular, $x$ is a limit of points in $\Phi(\Delta_{r_0}) \subset G$. \textit{Proof of \eqref{2inclusion:M}, \eqref{2inclusion:singular}, and \eqref{2inclusion:bdry set out cyl}:} since $E_t \setminus \cl(Q^*)=E\setminus \cl(Q^*)$, the sets appearing on the left-hand sides of \eqref{2inclusion:M}, \eqref{2inclusion:singular}, and \eqref{2inclusion:bdry set out cyl} are all subsets of $\Om \cap \pa F \setminus \cl(\Phi(U_{r_0}))$ as a consequence of \eqref{opening 3}, \eqref{fix 2}, and \eqref{fix 3}, respectively. \textit{Proof of \eqref{2inclusion:S}:} By construction $G\cap Q^*=E_t\cap Q^*$ so that $Q^*\cap\pa G=Q^*\cap\pa E_t$; since $S_t\subset\cl(Q^*)\cap\pa E_t$ we conclude the proof of \eqref{2inclusion:S}, and thus of \eqref{boundary identity}.

\medskip

\noindent {\it Conclusion:} We first prove \eqref{contra:the estimate}. Without loss of generality assume that $r^*$ is such that $\H^n(\pa^*E\cup\pa Q^*)=0$. By \eqref{boundary identity}, \eqref{Phi Delta perimeter deficit}, \eqref{fixing radius}, \eqref{condition2}, and the fact that $M\subset K\setminus(\pa E\cup\cl(\Phi(U_{r_0})))$, we find
\begin{eqnarray*}
\H^n(\Om \cap \pa G) & \leq & \H^n(\pa \Phi(\Delta_{r_0}))
\\
&&+ \H^n((\Om \cap \pa^*E) \setminus Q^*) + \H^n ((K \setminus \pa^*E)\setminus \Phi (U_{r_0}))
\\
&&+ \H^n (S_t) + \H^n(\{x + u(x) \, \nu(x) \, \colon \, x \in M\})
\\
&\le& 2\,\H^n(K\cap\Phi(U_{r_0}))-3\,c_{\bY}\,r_0^n
\\
&&+\H^n((\Om \cap \pa^*E) \setminus Q^*)+ \H^n ((K \setminus \pa^*E)\setminus \Phi (U_{r_0}))
\\
&&+\H^n(\pa^*E\cap\cl(Q^*))+c_{\bY}\,r_0^n+\H^n(M)+c_{\bY}\,r_0^n
\\
&\leq & 2\, \H^n(K \setminus \pa^*E) + \H^n (\Om \cap \pa^*E) - c_{\bY}\,r_0^n\,,
\end{eqnarray*}
that is \eqref{contra:the estimate}. To complete the argument we finally prove that $\Om \cap \pa G$ is $\C$-spanning $W$. To this aim, pick $\gamma \in \C$. If $\gamma \cap K \setminus ( \Phi(U_{r_0})  \cup Q^*  ) \neq \emptyset$, then also $\gamma \cap \pa G \ne \emptyset$ by \eqref{boundary identity}. If $\gamma \cap K \cap Q^* \ne \emptyset$, then also $\gamma \cap \pa E \cap Q^* \ne \emptyset$, and thus also $\gamma \cap S_t \ne \emptyset$ as a consequence of \cite[Lemma 2.3]{kms} since $S_t$ is a diffeomorphic image of $\pa E \cap \cl(Q^*)$: hence, $\gamma \cap \pa G \ne \emptyset$, again by \eqref{boundary identity}. We can therefore assume that $\gamma \cap K \setminus \Phi(U_{r_0}) = \emptyset$, and thus, since $K$ is $\C$-spanning $W$, that there exists $x \in \gamma \cap K \cap \Phi(U_{r_0}) \subset \gamma \cap \Phi(\Delta_{r_0})$, where in the last inclusion we have exploited \eqref{refined containment}. Since $\Phi(\Delta_{r_0})$ is contractible and, as consequence of $\ell<\infty$, $\gamma$ is homotopically non-trivial in $\Omega$, $\gamma$ must necessarily intersect $\R^{n+1} \setminus \Phi(\Delta_{r_0})$, and thus, by continuity, $\gamma \cap \pa \Phi(\Delta_{r_0})\ne\emptyset$. Since $\pa \Phi(\Delta_{r_0})\subset\Om\cap\pa G$, we have completed the proof.
\end{proof}

\section{Regularity theory and conclusion of the proof of Theorem \ref{t:main}}\label{s:graph}

\subsection{Blow-ups of stationary varifolds}\label{section varifolds} We say that $V_0$ is an {\bf integral $n$-cone} in $\R^{n+1}$ if $V_0=\var(\bC,\theta_0)$ for a closed locally $\H^n$-rectifiable cone $\bC$ in $\R^{n+1}$ (so that $\l\,x\in\bC$ for every $x\in\bC$ and $\l>0$), and  a zero-homogenous multiplicity function $\theta_0$ (so that $\theta_0(\l\,x)=\theta_0(x)$ for every $x\in\bC$ and $\l>0$). The importance of integral cones lies in the fact that if $V$ is a stationary integral $n$-varifold in some open set $U$, $x_0\in\spt\,V$ and $r_j\to0^+$ as $j\to\infty$, then, up to extracting a subsequence of $r_j$, there exists an integral $n$-cone $V_0$ such that
\[
(\iota_{x_0,r_j})_\sharp V \weak V_0\,,
\]
in the varifold convergence (duality with $C^0_c(U\times G_n^{n+1})$), where $\iota_{x,r}(y)=(y-x)/r$ for $x,y\in\R^{n+1}$ and $r>0$; moreover, $V_0$ is stationary in $\R^{n+1}$, and the collection of such limit stationary integral $n$-cones for $V$ at $x_0$ is denoted by
\[
{\rm Tan}(V,x_0)\,.
\]
We recall that if $V_0=\var(\bC,\theta_0)\in{\rm Tan}(V,x_0)$, then
\[
\Theta_V(x_0)=\Theta_{V_0}(0)\ge\Theta_{V_0}(y)\,,\qquad\forall y\in\bC\,.
\]
Correspondingly, the {\bf spine} of the integral $n$-cone $V_0=\var(\bC,\theta_0)$ is defined as
\[
S(V_0)=\Big\{y\in\R^{n+1}:\Theta_{V_0}(y)=\Theta_{V_0}(0)\Big\}\,;
\]
as it turns out, $S(V_0)$ is a linear space in $\R^{n+1}$, and it can actually be characterized as the largest linear space $L$ of $\R^{n+1}$ such that $V_0$ is invariant by translations in $L$, i.e. $(\tau_{v})_{\sharp}V_0=V_0$ for every $v\in L$, where $\tau_v(y)=y+v$ for all $y\in\R^{n+1}$. It is easily seen that if $\dim\,S(V_0)=k\in\{0,...,n\}$ and, without loss of generality, $S(V_0)=\{0\}^{n-k+1}\times\R^k$, then there exist a closed $(n-k)$-cone $\bC_0$ in $\R^{n-k+1}$ and a zero-homogeneous multiplicity function $\phi_0$ on $\bC_0$ such that
\[
\bC=\bC_0 \times \R^k\,, \qquad \theta_0(z,y)=\phi_0(z) \quad \mbox{for $\H^{n-k}$-a.e. $z \in \bC_0$, for every $y \in \R^k$}\,,
\]
and such that $W_0=\var(\bC_0,\phi_0)$ is a stationary integral $(n-k)$-cone in $\R^{n-k+1}$ with
\[
\Theta_{W_0}(0)=\Theta_{V_0}(0)\,, \qquad S(W_0)=\{0\}\,.
\]

The concept of spine leads to defining the notion of {\bf $k$-dimensional stratum} of a stationary integral $n$-varifold $V$ as
\[
\Sc^k(V)=\Big\{x\in\spt\,V: \dim S(V_0)\le k\,,\quad\forall V_0\in{\rm Tan}(V,x)\Big\}\,,
\]
where the classical dimension reduction argument of Federer, see \cite[Appendix A]{SimonLN}, shows that
\begin{equation}
  \label{dimension reduction}
\dim_\H(\Sc^k(V))\le k\qquad \forall\,k=0,...,n\,.
\end{equation}
Moreover, we have the following key result by Naber and Valtorta.

\begin{theorem}[{\cite[Theorem 1.5]{NV_varifolds}}]\label{thm nv} If $V$ is an integral stationary $n$-varifold in an open set $U$ of $\R^{n+1}$, then $\Sc^k(V)$ is countably $k$-rectifiable in $U$ for every $k=0,...,n$.
\end{theorem}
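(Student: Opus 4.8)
The assertion is the main theorem of Naber--Valtorta \cite{NV_varifolds}, and the plan is to follow its ``quantitative stratification plus rectifiable-Reifenberg'' strategy. The first move is to replace the classical strata $\Sc^k(V)$ by quantitative ones. Call $B_s(x)$ \emph{$(j,\e)$-symmetric} if the normalized weight $s^{-n}\,\|V\|$ restricted to $B_s(x)$ lies within $\e$, in the $L^2$-sense used in \cite{NV_varifolds}, of the weight of a stationary integral cone whose spine is at least $j$-dimensional, and set
\[
\Sc^k_{\e,r}(V)=\big\{x\in\spt\,V\ :\ B_s(x)\ \text{is not $(k+1,\e)$-symmetric for every $s\in(r,1)$}\big\}\,.
\]
Using the compactness of the space of stationary integral cones one checks that $x\in\Sc^k(V)$ if and only if $B_s(x)$ fails to be $(k+1,\e)$-symmetric at all small scales for some $\e>0$, i.e. $\Sc^k(V)=\bigcup_{\e>0}\bigcap_{r>0}\Sc^k_{\e,r}(V)$; Federer's dimension reduction \cite[Appendix A]{SimonLN} in addition supplies the a priori size bound that seeds the covering arguments below. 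Since countable $k$-rectifiability is stable under countable unions, it suffices to show that each $\Sc^k_{\e,r}(V)$ has locally finite $\H^k$-measure and is countably $k$-rectifiable, with bounds independent of $r\in(0,1)$.

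The proof rests on two quantitative rigidity inputs. First, the monotonicity formula makes $r\mapsto\Theta_V(x,r)=\|V\|(B_r(x))/(\om_n\,r^n)$ monotone non-decreasing with limit $\Theta_V(x)$, and its rigidity case is effective: if $\Theta_V(x,r)-\Theta_V(x,\g r)<\de$ for a fixed $\g\in(0,1)$, then $B_{r/2}(x)$ is $(0,\e(\de))$-close to a cone with vertex $x$, with $\e(\de)\to 0$ as $\de\to 0$, the error being an explicit integral of the radial-deviation term in the monotonicity identity. Second, \emph{cone-splitting}: if $\|V\|$ is $\e$-close in $B_s(x)$ to a cone centered at $x$ and also to a cone centered at $y$, with $|x-y|\ge\rho\,s$ and the two cones not both invariant along $x-y$, then $\|V\|$ is close in $B_{s/2}(x)$ to a cone invariant under translation along $x-y$; iterating over $k+1$ suitably spread-out almost-vertices forces a $(k+1,\e)$-symmetric ball. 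Contrapositively, at every scale $s\in(r,1)$ the set $\Sc^k_{\e,r}(V)\cap B_s(x)$ must stay in a thin tubular neighborhood of some $k$-plane, for otherwise $(k+1,\e)$-symmetry is produced --- a quantitative flatness statement for the stratum at each scale.

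The technical heart is the $L^2$ best-approximation (``Reifenberg energy'') estimate. Write $\mu=\H^k\mres\Sc^k_{\e,r}(V)$; a stopping-time/covering argument fed by cone-splitting first yields the Ahlfors upper bound $\mu(B_s(y))\le C\,s^k$ on a ball $B_1(x_0)$ with $B_2(x_0)\ssubset U$. Then, for the Jones numbers
\[
\b^k_\mu(y,s)^2=\inf_{L}\ \frac1{s^k}\int_{B_s(y)}\frac{\dist(z,L)^2}{s^2}\,d\mu(z)\,,
\]
the infimum over affine $k$-planes $L$, one proves
\[
\sum_{s=2^{-i}\ge r}\int_{B_1(x_0)}\b^k_\mu(y,s)^2\,\frac{d\mu(y)}{s^k}\ \le\ C\,\Big(\Theta_V(x_0,2)-\lim_{\varrho\to0^+}\Theta_V(x_0,\varrho)\Big)\,,
\]
whose right-hand side is finite and independent of $r$. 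The mechanism: at a scale $s$ where $B_s(y)$ is not $(k+1,\e)$-symmetric, the cone-splitting bound converts $\b^k_\mu(y,s)$ into a quantity controlled by the density drops across comparable scales around $y$ and around nearby points carrying $\mu$; summing over dyadic scales telescopes these drops into the single density drop of $V$ at $x_0$. I expect the main obstacle of the whole argument to be precisely the bookkeeping that makes this telescoping legitimate, namely maintaining the packing bounds on $\mu$ and the genuine spreading (``independence'') needed for cone-splitting simultaneously and uniformly throughout the multi-scale iteration.

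Finally, with the uniform upper bound $\mu(B_s(y))\le C\,s^k$ and the finite Jones energy just obtained, the rectifiable-Reifenberg theorem of \cite{NV_varifolds} applies: such a measure is carried by a countably $k$-rectifiable set, and in fact $\mu\le C\,\H^k\mres E$ for some $k$-rectifiable set $E$. Hence $\Sc^k_{\e,r}(V)$ is countably $k$-rectifiable with locally finite $\H^k$-measure, uniformly in $r$; intersecting over $r\downarrow0$ and taking the countable union over $\e\downarrow0$ from the first paragraph shows that $\Sc^k(V)$ is countably $k$-rectifiable in $U$, for every $k=0,\dots,n$, which is the claim. (The bound $\dim_{\H}\Sc^k(V)\le k$ recorded in \eqref{dimension reduction} is consistent with, but weaker than, this conclusion.)
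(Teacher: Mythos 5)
This statement is not proved in the paper at all: it is quoted verbatim from Naber--Valtorta \cite[Theorem 1.5]{NV_varifolds} and used as a black box (the paper only records the elementary identity $\cS^k(V)=\bigcup_{\delta>0}\cS^k_\delta(V)$ and the Minkowski-content version, Theorem \ref{thm:nv}, again as imported results). So the relevant question is whether your sketch would stand on its own as a proof of the Naber--Valtorta theorem, and there it falls short of being a proof. Your outline of the strategy is faithful --- quantitative strata, effective rigidity of the monotonicity formula, cone-splitting, dyadic summability of Jones-type $\beta$-numbers against the total density drop, and a rectifiable-Reifenberg theorem --- but the two pillars you lean on, namely the $L^2$ best-approximation estimate $\sum_i \int \b^k_\mu(y,2^{-i})^2\,d\mu/2^{-ik}\le C\,(\Theta_V(x_0,2)-\Theta_V(x_0,0^+))$ and the rectifiable-Reifenberg theorem for measures with summable $\beta^2$, are precisely the main new theorems of \cite{NV_varifolds}. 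Invoking them is circular if the goal is to prove the theorem, and neither admits a short derivation from the monotonicity formula and cone-splitting alone.

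There is also a structural inaccuracy worth flagging: you present the argument as sequential (first an Ahlfors upper bound $\mu(B_s(y))\le C s^k$ via a stopping-time argument, then the $\beta$-number energy bound, then Reifenberg). In \cite{NV_varifolds} the upper Ahlfors regularity of $\H^k\mres\cS^k_{\e,r}$ is \emph{not} available a priori; it is established simultaneously with the covering construction by an induction on scales in which the discrete rectifiable-Reifenberg theorem is applied at each stage to propagate the packing bound downward. Without that bound one cannot even make sense of the measure $\mu=\H^k\mres\cS^k_{\e,r}$ having finite mass, nor run the telescoping of density drops you describe. You correctly identify this bookkeeping as ``the main obstacle,'' but that obstacle is the proof; as written, the argument assumes its resolution. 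If the intent is merely to justify the statement for the purposes of this paper, the honest route is the one the paper takes: cite \cite{NV_varifolds}.
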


\subsection{Regularity of Almgren minimal sets and proof of Theorem \ref{t:main}}\label{section regularity of Almgren min} We recall that $M$ is an Almgren minimal set in an open set $U\subset\R^{n+1}$ if $M\subset U$ is closed relatively to $U$ and
\begin{equation}
  \label{almgren minimizing set}
\H^n(M\cap B_r(x))\le\H^n(f(M)\cap B_r(x))
\end{equation}
whenever $f$ is a Lipschitz map with $\{f\ne\id\}\cc B_r(x)\cc U$ and $f(B_r(x))\subset B_r(x)$. An immediate consequence of \eqref{almgren minimizing set} is that the multiplicity-one $n$-varifold $V=\var(M,1)$ associated to $M$ is stationary in $U$. The Almgren minimality of $M$ implies that the set of tangent varifolds to $V$ is simpler than it could be in general: indeed, varifold tangent cones to Almgren minimal sets have multiplicity one, and their supports are Almgren minimal cones:

\begin{theorem}[{\cite[Corollary II.2]{taylor76}}]\label{thm taylor1} If $M$ is an Almgren minimal set in $U\subset\R^{n+1}$, $x_0\in M$, and $V_0=\var(\bC,\theta_0)\in{\rm Tan}(\var(M,1),x_0)$, then $\theta_0=1$ on $\bC$, and $\bC$ is an Almgren minimal cone in $\R^{n+1}$.
\end{theorem}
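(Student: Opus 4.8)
The statement is classical, due essentially to Almgren and Taylor; here is the structure of a proof. Write $V=\var(M,1)$. Almgren minimality of $M$, tested against one-parameter families of Lipschitz deformations generated by compactly supported vector fields, forces $V$ to be stationary in $U$. Hence the monotonicity formula applies: tangent varifolds to $V$ at $x_0$ exist along any $r_j\to0^+$ (up to subsequences), they are stationary integral $n$-cones, and every $V_0=\var(\bC,\theta_0)\in{\rm Tan}(V,x_0)$ satisfies $\Theta_{V_0}(0)=\Theta_V(x_0)$. In particular $\bC=\spt V_0$ is automatically a closed cone, so the content of the statement is that \emph{no multiplicity is created in the blow-up} (i.e. $\theta_0\equiv 1$) and that $\bC$ is itself an Almgren minimal set.

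The plan is to pass to the limit along the blow-up sequence at the level of \emph{sets}, not merely of measures. First, Almgren minimality yields Ahlfors regularity of $M$: the interior lower bound $\H^n(M\cap B_r(x))\ge\om_n\,r^n$ of \eqref{interior_dlb} follows from monotonicity, while comparing $M\cap B_r(x)$ with the cone over $M\cap\pa B_r(x)$ (a legitimate Lipschitz competitor for $r$ chosen so that $\H^n(M\cap\pa B_r(x))<\infty$) gives an upper bound $\H^n(M\cap B_r(x))\le C(n)\,r^n$. Consequently the rescalings $M_j=(M-x_0)/r_j$ are Almgren minimal in rescaled neighborhoods of $0$ with uniform Ahlfors constants, so up to a further subsequence they converge in local Hausdorff distance to a closed set $C\subset\R^{n+1}$, and one checks $\spt V_0\subset C$.

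Next I would establish the two facts that make the scheme close: (i) $C$ is Almgren minimal in $\R^{n+1}$; and (ii) $\H^n\mres M_j\weakstar\H^n\mres C$. For (i), given a Lipschitz $f$ with $\{f\ne\id\}\cc B_r(x)$ and $f(B_r(x))\subset B_r(x)$, one builds competitors $f_j$ for $M_j$ by interpolating between $f$ and the identity in a thin spherical shell near $\pa B_r(x)$, so that $f_j(M_j)$ Hausdorff-converges to $f(C)$ inside $B_r(x)$ while $M_j$ is untouched outside; passing to the limit in $\H^n(M_j\cap B_r(x))\le\H^n(f_j(M_j)\cap B_r(x))$, using lower semicontinuity of $\H^n$ along Hausdorff-convergent sequences with uniform lower density bounds on the left-hand side, and the upper bound from Ahlfors regularity of $f_j(M_j)$ on the right, yields the minimality inequality for $C$. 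For (ii), lower semicontinuity already gives $\H^n(C\cap A)\le\liminf_j\H^n(M_j\cap A)$ for open $A$; the reverse (absence of mass drop) follows by feeding a localized $f=\id$ into the comparison of step (i), so that minimality of $M_j$ against deformation onto $C$ forces $\limsup_j\H^n(M_j\cap K)\le\H^n(C\cap K)$ on compact $K$ with $\H^n(C\cap\pa K)=0$. Combining the two inequalities gives $\H^n\mres M_j\weakstar\H^n\mres C$. Since at the same time $\var(M_j,1)\weak V_0$ as varifolds, we obtain $\|V_0\|=\H^n\mres C$; integrality of $V_0$ then forces $C=\spt V_0=\bC$ and $\theta_0=1$ for $\H^n$-a.e.\ point of $\bC$. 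Finally $\bC=C$ is Almgren minimal by (i) and is a cone by the monotonicity discussion above, hence it is an Almgren minimal cone.

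The main obstacle is exactly the pair (i)--(ii): the persistence of Almgren minimality under Hausdorff limits together with the no-mass-drop property. This is the technical heart of Taylor's (and David's) regularity theory for area-minimizing sets with singularities; the delicate points are the interpolation of competitors near $\pa B_r(x)$ (so that no spurious area is created on the boundary sphere) and the quantitative use of the uniform density estimates to run the lower semicontinuity argument for $\H^n$ along the set-convergent sequence in dimension $n\ge 2$.
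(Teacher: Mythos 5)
The paper does not prove this statement: it is quoted verbatim from Taylor's work (\cite[Corollary II.2]{taylor76}), so there is no internal proof to compare against. Your outline does reproduce the standard strategy behind that result (stationarity from inner variations, monotonicity, Ahlfors regularity, Hausdorff convergence of the rescaled sets, and then the two pillars: persistence of Almgren minimality under limits and absence of mass drop, which together identify $\|V_0\|$ with $\H^n\mres\bC$ and force $\theta_0=1$ by integrality). As a roadmap it is faithful to Taylor's argument and to its modern incarnation in David's limit theorems for quasiminimal sets.

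However, as a proof it has a genuine gap: steps (i) and (ii) are not arguments but placeholders, and they are the entire content of the theorem. In particular, your description of the no-mass-drop step — ``feeding a localized $f=\id$ into the comparison'' — does not produce the inequality $\limsup_j\H^n(M_j\cap K)\le\H^n(C\cap K)$: with $f=\id$ the minimality inequality is vacuous. What is actually needed is a Lipschitz deformation of $M_j$ \emph{onto} (a set of $\H^n$-measure close to that of) the limit set $C$, exploiting that $M_j$ lies in a small neighborhood of $C$; constructing such a retraction is nontrivial and is usually done via Federer--Fleming projections onto a dyadic grid adapted to $C$, or via the partial regularity of $C$. The interpolation near $\pa B_r(x)$ in step (i) likewise requires a quantitative choice of the gluing shell (using the coarea inequality to find spheres meeting $M_j$ in small $\H^{n-1}$-measure) so that no area concentrates on the interface; without this the limit inequality can fail. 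A smaller slip: the cone competitor for the upper Ahlfors bound requires $\H^{n-1}(M\cap\pa B_r(x))<\infty$ (true for a.e.\ $r$ by coarea), not $\H^n(M\cap\pa B_r(x))<\infty$ as you wrote, and the radial map to the vertex is not Lipschitz at the vertex, so the cone must be realized as a limit of admissible competitors; alternatively, the upper bound follows at once from the monotonicity formula, with no competitor needed. In short, your proposal correctly identifies the architecture of the classical proof but does not supply the two constructions on which it rests.
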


In particular, setting
\begin{eqnarray*}
&&{\rm Tan}(M,x_0)=\Big\{\bC\subset\R^{n+1}:V_0=\var(\bC,1)\in{\rm Tan}(\var(M,1),x_0)\Big\}\,,
\\
&&\mbox{and, correspondingly,  $S(\bC)=S(V_0)$ for every $\bC\in {\rm Tan}(M,x_0)$}\,,
\end{eqnarray*}
we have that
\[
\Sc^k(M)=\Big\{x_0\in M:\dim S(\bC)\le k\,,\quad\forall \bC\in{\rm Tan}(M,x_0)\Big\}
\]
is countably $k$-rectifiable in $\R^{n+1}$ thanks to Theorem \ref{thm nv}.

\begin{remark}[Smoothness criterion]\label{remark smooth}
  {\rm If ${\rm Tan}(M,x_0)$ contains an $n$-dimensional plane, then $M$ is a classical minimal surface in a neighborhood of $M$ as a consequence of Allard's regularity theorem \cite{Allard} and of the fact that $V=\var(M,1)$ is an integral stationary $n$-varifold. As a consequence, the {\bf singular set} $\Sigma$ of $M$ in $U$, defined as the maximal closed subset of $M$ such that $M\setminus\Sigma$ is a smooth minimal surface in $U$, can be characterized as the set of those $x_0\in M$ such that ${\rm Tan}(M,x_0)$ contains no plane.}
\end{remark}

The next important fact is that one can completely characterize Almgren minimal cones in $\R^2$ and $\R^3$:

\begin{theorem}{\cite[Proposition II.3]{taylor76}}\label{thm taylor2}
  If $\bC$ is an Almgren minimal cone in $\R^2$, then, up to rotations, either $\bC=\{0\}\times\R$ or $\bC=\bY^1$.  If $\bC$ in an Almgren minimal cone in $\R^3$, then, up to rotations, either $\bC=\{0\}\times\R^2$, or $\bC=\bY^1\times\R$, or $\bC=\bT^2$.
\end{theorem}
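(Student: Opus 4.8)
The result is the classification of area minimizing cones in low dimensions due to J. Taylor \cite{taylor76}, so in the present paper one only needs to quote it; the argument, however, runs roughly as follows. In $\R^2$, let $\bC$ be a $1$-dimensional Almgren minimal cone with vertex at the origin. The Ahlfors upper/lower regularity of Almgren minimal sets (the upper bound from monotonicity of the stationary varifold $\var(\bC,1)$, the lower bound $\Theta_{\bC}(x)\ge 1$ for $x\in\bC$ being a general comparison property) forces $\bC\cap\SS^1$ to be finite, so $\bC$ is a finite union of rays $R_1,\dots,R_N$ from the origin. Since the density at the vertex equals $N/2$, the lower density bound rules out $N\le 1$. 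For $N\ge2$ one compares, inside a small ball $B_\rho(0)$, the configuration of $N$ radii with the competitor obtained by a local Steiner rewiring: whenever two consecutive rays meet at an angle strictly less than $2\pi/3$ one may split off a short $\bY$ and strictly decrease length (a Lipschitz image of $\bC\cap B_\rho(0)$). Hence minimality forces every angle at the vertex to be $\ge 2\pi/3$; as the $N$ angles sum to $2\pi$ this gives $N\le 3$, with $N=3$ forcing the three $2\pi/3$ angles (so $\bC$ is, up to rotation, $\bY^1$) and $N=2$ forcing opposite rays (so $\bC$ is a line). Conversely the line is trivially minimal (any Lipschitz deformation of a diameter of a ball keeps the two antipodal boundary points in the image of a connected set), and $\bY^1$ is minimal by the paired (Lawlor--Morgan) calibration, assigning to the three planar sectors cut out by $\bY^1$ three unit vectors whose consecutive differences are the unit normals to the edges and integrating the resulting divergence-free fields against a competitor.

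For a $2$-dimensional Almgren minimal cone $\bC$ in $\R^3$, the plan is to reduce everything to the sphere via $\Gamma:=\bC\cap\SS^2$, a compact set with $\H^1(\Gamma)<\infty$, $\bC$ being the cone over $\Gamma$. Where a tangent plane exists, Allard's theorem makes $\bC$ a smooth minimal surface, and a cone which is a smooth minimal surface off the vertex meets $\SS^2$ along a geodesic arc; thus $\Gamma$ is a finite union of geodesic arcs together with a (necessarily finite) junction set. At a junction point $x_0\in\bC\setminus\{0\}$, the tangent cone to $\bC$ splits off the line $\R x_0$ (since $\bC$ contains the ray through $x_0$ and is dilation invariant), so it is $\bC''\times\R$ with $\bC''$ a $1$-dimensional Almgren minimal cone in $(\R x_0)^\perp\cong\R^2$; by the planar case, $\bC''$ is a line (so $x_0$ is a regular point) or $\bY^1$. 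Consequently every junction of $\Gamma$ is a triple point at which the three geodesic arcs meet at angles $2\pi/3$, i.e. $\Gamma$ is a $120^\circ$ geodesic net on $\SS^2$.

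It then remains to classify such nets and verify minimality of the survivors. Writing $V,E,F$ for the numbers of vertices, edges and faces, the triple-junction condition gives $2E=3V$; applying Gauss--Bonnet to each face (geodesic sides, interior angles $2\pi/3$, Gauss curvature $1$) shows each face is a geodesic $k$-gon of area $2\pi-k\pi/3$, forcing $k\le 5$, and summation over faces recovers (consistently with Euler's formula) $F-E/3=2$. The vertex-free case yields a single great circle, i.e. $\bC$ is a plane. When vertices are present, combinatorics alone leaves several numerically admissible profiles, so one must use geometric realizability as a geodesic net on $\SS^2$ together with the fact that $\bC$ is actually \emph{minimizing} (not merely stationary) to discard all but two: three half-great-circles joining an antipodal pair of points (whence $\bC=\bY^1\times\R$ up to rotation), and the radial projection to $\SS^2$ of the $1$-skeleton of a regular spherical tetrahedron (whence $\bC=\bT^2$). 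Finally, these two cones are checked to be genuinely Almgren minimal: $\bY^1\times\R$ by the planar $\bY^1$-calibration crossed with $\R$, and $\bT^2$ by the Lawlor--Morgan paired calibration associated to the four unit vectors pointing to the vertices of a regular tetrahedron. The substantive obstacle is precisely this last stage — eliminating the extra combinatorial candidates for the spherical net and establishing the minimality of $\bT^2$ — which is the core of \cite{taylor76}.
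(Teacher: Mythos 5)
The paper does not prove this statement at all: it is imported verbatim from Taylor (Proposition II.3 of \cite{taylor76}), so there is no internal proof to compare against, and your decision to treat it as a quoted classification is exactly what the paper does. Your sketch is moreover a faithful outline of Taylor's actual strategy (finiteness of $\bC\cap\SS^1$ and the $2\pi/3$ angle condition in the plane; reduction to $120^\circ$ geodesic nets on $\SS^2$ via tangent cones at points of $\Gamma$, Gauss--Bonnet/Euler combinatorics, and elimination of the remaining Heppes--Lamarle nets by explicit comparison surfaces, with paired calibrations for $\bY^1\times\R$ and $\bT^2$). One small imprecision: in the planar case with $N=2$, the conditions ``each angle $\ge 2\pi/3$'' and ``angles sum to $2\pi$'' do not by themselves force the two rays to be opposite (e.g.\ $2\pi/3$ and $4\pi/3$ are admissible); you need the separate remark that two rays meeting at an angle $<\pi$ can be retracted onto the chord joining their endpoints on $\pa B_\rho$, which strictly decreases length while keeping the image connected.
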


\begin{corollary}\label{corollary taylor 3}
  If $M$ is an Almgren minimal set in $U\subset\R^{n+1}$ and $\bC\in{\rm Tan}(M,x_0)$ for some $x_0\in M$, then, up to rotations, either $\bC=\{0\}\times\R^n$, or $\bC=\bY^1\times\R^{n-1}$, or $\bC=\bT^2\times\R^{n-2}$ or $\dim S(\bC)\le n-3$.
\end{corollary}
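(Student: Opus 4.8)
The plan is to combine Theorem~\ref{thm taylor1}, the splitting of stationary cones along their spines recalled in Section~\ref{section varifolds}, and Taylor's classification Theorem~\ref{thm taylor2}. Let $\bC\in{\rm Tan}(M,x_0)$. By Theorem~\ref{thm taylor1}, $\bC$ is an Almgren minimal cone in $\R^{n+1}$ and $V_0=\var(\bC,1)$ is a stationary integral $n$-cone of multiplicity one. Put $k=\dim S(\bC)\in\{0,\dots,n\}$; if $k\le n-3$ we are in the last alternative and there is nothing to prove, so assume $k\ge n-2$, hence $n-k\le2$. Up to a rotation we may take $S(\bC)=\{0\}^{n-k+1}\times\R^k$, and then the splitting of Section~\ref{section varifolds} produces a closed $(n-k)$-dimensional cone $\bC_0\subset\R^{n-k+1}$ with $\bC=\bC_0\times\R^k$ and with $W_0=\var(\bC_0,1)$ a stationary integral $(n-k)$-cone satisfying $S(\bC_0)=\{0\}$ (the multiplicity of $W_0$ equals one because that of $V_0$ does). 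Everything thus reduces to classifying $\bC_0$.

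The key point is that \emph{$\bC_0$ is an Almgren minimal set in $\R^{n-k+1}$}. If $k=0$ this is immediate (then $\bC_0=\bC$), so suppose $k\ge1$ and write $d=n-k+1$, $m=k$, so $\bC=\bC_0\times\R^m$ is $n$-dimensional in $\R^d\times\R^m=\R^{n+1}$. We use the standard fact that a factor of an Almgren minimal product is Almgren minimal, in the following form. If $\bC_0$ were not Almgren minimal in $\R^d$, there would exist a Lipschitz map $\varphi\colon\R^d\to\R^d$ with $\{\varphi\ne\id\}\cc B^d_\rho(p)$, $\varphi(B^d_\rho(p))\subset B^d_\rho(p)$ and $\H^{d-1}(\varphi(\bC_0)\cap B^d_\rho(p))<\H^{d-1}(\bC_0\cap B^d_\rho(p))$. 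For $L\ge1$ pick a Lipschitz $\chi\colon[0,\infty)\to[0,1]$ with $\chi\equiv1$ on $[0,L]$ and $\chi\equiv0$ on $[L+1,\infty)$, and define
\[
\Phi(z,y)=\big((1-\chi(|y|))\,z+\chi(|y|)\,\varphi(z)\,,\ y\big)\,,\qquad(z,y)\in\R^d\times\R^m\,.
\]
By convexity of the ball $B^d_\rho(p)$, $\Phi$ is an admissible deformation for the Almgren minimality of $\bC$, and $\bC$ and $\Phi(\bC)$ coincide outside $B^d_\rho(p)\times\{|y|\le L+1\}$; hence $\H^n\big(\bC\cap(B^d_\rho(p)\times\{|y|\le L+1\})\big)\le\H^n\big(\Phi(\bC)\cap(B^d_\rho(p)\times\{|y|\le L+1\})\big)$. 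By Fubini the left-hand side equals $\omega_m(L+1)^m\,\H^{d-1}(\bC_0\cap B^d_\rho(p))$. On $\{|y|\le L\}$ one has $\Phi(z,y)=(\varphi(z),y)$, so the part of $\Phi(\bC)$ there contributes exactly $\omega_m L^m\,\H^{d-1}(\varphi(\bC_0)\cap B^d_\rho(p))$; on the slab $\{L\le|y|\le L+1\}$ the tangential Jacobian of $\Phi$ along $\bC$ is bounded by a constant depending only on $d,m,\rho,\Lip(\varphi),\Lip(\chi)$ (in particular not on $L$), so by the area formula that part contributes at most $C\,L^{m-1}$. Therefore $\omega_m L^m\,\H^{d-1}(\bC_0\cap B^d_\rho(p))\le\omega_m L^m\,\H^{d-1}(\varphi(\bC_0)\cap B^d_\rho(p))+C\,L^{m-1}$; dividing by $\omega_m L^m$ and letting $L\to\infty$ contradicts the strict inequality above. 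Hence $\bC_0$ is Almgren minimal in $\R^{n-k+1}$.

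We now conclude via Theorem~\ref{thm taylor2}. If $n-k=0$ then $\bC_0\subset\R$ is a $0$-dimensional cone, so $\bC_0=\{0\}$ and $\bC=\{0\}\times\R^n$. If $n-k=1$, then $\bC_0$ is a $1$-dimensional Almgren minimal cone in $\R^2$, so up to rotation $\bC_0=\{0\}\times\R$ or $\bC_0=\bY^1$; the former is excluded since its spine is $1$-dimensional while $S(\bC_0)=\{0\}$, so $\bC_0=\bY^1$ and $\bC=\bY^1\times\R^{n-1}$. If $n-k=2$, then $\bC_0$ is a $2$-dimensional Almgren minimal cone in $\R^3$, so up to rotation $\bC_0\in\{\{0\}\times\R^2,\ \bY^1\times\R,\ \bT^2\}$; the first two are excluded because their spines have dimensions $2$ and $1$, whereas $S(\bC_0)=\{0\}$ (and $S(\bT^2)=\{0\}$), so $\bC_0=\bT^2$ and $\bC=\bT^2\times\R^{n-2}$. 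Composing the rotations used above finishes the proof.

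The main obstacle is the product lemma in the second paragraph: one must verify that the lifted map $\Phi$ is a genuine admissible deformation of $\bC$ — this is why $\Phi$ uses a convex interpolation between $\id$ and $\varphi$ rather than a literal product deformation, so that the ball-into-ball condition survives — and one must check that the area created by the $y$-dependence of $\Phi$ inside the transition slab is $O(L^{m-1})$, hence negligible next to $L^m$ as $L\to\infty$. The remaining steps are bookkeeping built on Theorem~\ref{thm taylor1}, the spine splitting, and Theorem~\ref{thm taylor2}.
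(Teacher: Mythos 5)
Your proof is correct and follows essentially the same route as the paper: split $\bC=\bC_0\times\R^k$ along its spine via Theorem \ref{thm taylor1} and Section \ref{section varifolds}, observe that the cross-section $\bC_0$ is itself Almgren minimal, and classify it with Theorem \ref{thm taylor2}. The only difference is that the paper merely asserts the factor-minimality of $\bC_0$ as a known fact, whereas you supply a full proof via the cut-off lifted deformation $\Phi$ and the $L\to\infty$ limit; that argument is sound (the ambient ball $B_R((p,0))$ with $R=\sqrt{\rho^2+(L+1)^2}$ makes $\Phi$ admissible in the sense of \eqref{almgren minimizing set}).
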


\begin{proof}
  One needs to notice that if $\bC=\bC_0\times\R^k$ is an Almgren minimal cone in $\R^{n+1}$, then $\bC_0$ is an Almgren minimal cone in $\R^{n-k+1}$, and combine this fact with Theorem \ref{thm taylor1} and Theorem \ref{thm taylor2}.
\end{proof}

If $M$ is an Almgren minimal set in $U$, $\bC$ is an Almgren minimal cone in $\R^{n+1}$, $\a\in(0,1)$ and $x_0\in M$, then we say that $M$ {\bf admits ambient parametrization of class $C^{1,\a}$ over $\bC$ at $x_0$}, if there exist $r>0$, an open neighborhood $A$ of $x_0$, and a $C^{1,\a}$-diffeomorphism $\Phi:\R^{n+1}\to\R^{n+1}$ such that, $\Phi(0)=x_0$, $\nabla\Phi(0)=\Id$ and
\begin{equation}
  \label{ambient parametrization}
  \Phi\big(B_r\cap\bC\big)=M\cap A\,.
\end{equation}
The main result contained in \cite{taylor76} can be formulated as follows:

\begin{theorem}[{\cite{taylor76}}]\label{thm main taylor}
  If $M$ is an Almgren minimal set in $U\subset\R^3$ and $x_0\in M$, then either $M$ is a classical minimal surface in a neighborhood of $x_0$, or $M$ admits an ambient parametrization of class $C^{1,\a}$ over $\bC$ at $x_0$, where, modulo isometries, $\bC\in\{\bY^1\times\R,\bT^2\}$.
\end{theorem}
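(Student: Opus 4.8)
The plan is to obtain this statement as a repackaging of the structure theorem of \cite{taylor76}, using the tangent-cone machinery already assembled above; no analysis beyond Taylor's is needed. First I would run the tangent cone analysis at a point $x_0\in M$: by Corollary \ref{corollary taylor 3} applied with $n=2$ (so that the alternative $\dim S(\bC)\le n-3$ is vacuous), every $\bC\in{\rm Tan}(M,x_0)$ is, up to a rotation, one of $\{0\}\times\R^2$, $\bY^1\times\R$, or $\bT^2$, and correspondingly $\Theta_{\var(M,1)}(x_0)$ takes one of three distinct values (namely $1$, $3/2$, or the density of $\bT^2$ at its vertex). If some tangent cone at $x_0$ is a plane, then $\Theta_{\var(M,1)}(x_0)=1$ and, by Remark \ref{remark smooth} (i.e.\ Allard's theorem \cite{Allard}), $M$ is a smooth minimal surface in a neighborhood of $x_0$ -- this is the first alternative. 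So I would then reduce to the case $\Theta_{\var(M,1)}(x_0)>1$, in which every tangent cone at $x_0$ is, up to isometry, $\bY^1\times\R$ or $\bT^2$, with the value of the density selecting the type.

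Next, for such an $x_0$, fix a tangent cone $\bC\in{\rm Tan}(M,x_0)$, which is a rotated copy of $\bY^1\times\R$ or of $\bT^2$. Here I would invoke the heart of \cite{taylor76}: the epiperimetric inequality at $\bY^1\times\R$ and at $\bT^2$, together with the resulting decay of the spherical excess of $M$, which yield uniqueness of the tangent cone (${\rm Tan}(M,x_0)=\{\bC\}$) and Hölder convergence of $M$ to $\bC$ at $x_0$. Concretely, in the cylindrical coordinates adapted to the spine of $\bC$ (a line for $\bY^1\times\R$, the origin for $\bT^2$), $M$ coincides near $x_0$ with the image of $\bC$ under a deformation that is $C^{1,\a}$ up to the spine, for some dimensional $\a\in(0,1)$, and whose differential at $x_0$ is the identity. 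The final step is to promote this graphical description to the ambient form \eqref{ambient parametrization}: I would extend the graph map from a neighborhood of the spine in $\bC$ to a global $C^{1,\a}$-diffeomorphism $\Phi\colon\R^{n+1}\to\R^{n+1}$ by a standard cutoff-and-extension argument, normalized so that $\Phi(0)=x_0$ and $\nabla\Phi(0)=\Id$ -- the normalization of the differential being available precisely because the deformation is first-order trivial at $x_0$ -- and then take $A=\Phi(B_r)$ for $r$ small.

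The only genuinely hard input is Taylor's uniqueness of the tangent cone together with the Hölder decay at singular points, i.e.\ the main theorem of \cite{taylor76} itself, which we will cite rather than reprove. Everything surrounding it -- the classification of tangent cones via Corollary \ref{corollary taylor 3}, the smoothness criterion of Remark \ref{remark smooth} in the planar-tangent case, and the passage from a $C^{1,\a}$ graph over $\bC$ to an ambient $C^{1,\a}$ diffeomorphism fixing $x_0$ with identity differential -- is soft and presents no real obstacle.
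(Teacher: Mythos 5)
Your proposal is essentially correct, but note that the paper offers no proof of this statement at all: it is recorded as a black-box citation of \cite{taylor76}, so there is no internal argument to compare against. Your reconstruction of where the result comes from is accurate -- classification of tangent cones via Corollary \ref{corollary taylor 3} with $n=2$, Allard/Remark \ref{remark smooth} in the planar case, and Taylor's epiperimetric inequalities at $\bY^1\times\R$ and $\bT^2$ giving uniqueness of the tangent cone and $C^{1,\a}$ decay at singular points.

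The one step you dismiss as ``soft'' is precisely the step the authors themselves single out as delicate: passing from Taylor's parametrization of $M$ \emph{over the cone} $\bC$ (a map defined only on $\bC$, which is $C^{1,\a}$ on each sheet up to the spine) to a genuine \emph{ambient} $C^{1,\a}$-diffeomorphism of $\R^{3}$ as required by \eqref{ambient parametrization}. A ``standard cutoff-and-extension'' does not obviously produce a map that is $C^{1,\a}$ and invertible across the spine, where three half-planes meet; one needs a Whitney-type extension argument compatible with the matching of the three sheets. In the proof of Theorem \ref{t:main} the authors explicitly remark that this extension ``is usually mentioned without details in the literature'' and point to \cite[Theorem 3.1]{CiLeMaIC1} (and the relevant pages of \cite{taylor76} and \cite{Simon_cylindrical}) for a complete treatment. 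Since the whole use of Theorem \ref{thm main taylor} in this paper is to contradict Theorem \ref{prop:noYx}, whose hypothesis is exactly the existence of such an ambient parametrization, this extension step is the part of the statement that actually carries weight and should not be waved through.
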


\begin{remark}
  {\rm The analysis of Almgren minimal sets in $\R^2$ is noticeably simpler, and it yields the stronger conclusions that $M$ is locally {\it isometric} either to a line or to $\bY^1$: a detailed proof can be easily obtained, for example, by minor adaptations of \cite[Section 30.3]{maggiBOOK}.}
\end{remark}

We are finally in the position to prove Theorem \ref{t:main}.

\begin{proof}[Proof of Theorem \ref{t:main}] Let $(K,E)$ be a generalized minimizer of $\psi(\eps)$. By Theorem \ref{proposition Lipschitz minimizing}, $M=K\setminus\cl(E)$ is an Almgren minimal set in $U=\Om\setminus\cl(E)$. By Corollary \ref{corollary taylor 3} we have that $M=R\cup\Sigma$, where $R$ is a smooth, stable minimal hypersurface in $U\setminus\Sigma$, and $\Sigma$ is a relatively closed subset of $M$ such that if $x_0\in\Sigma$ and $\bC\in{\rm Tan}(M,x_0)$, then either $\bC=\bY^1\times\R^{n-1}$ (modulo isometries), or $\dim\,S(\bC)\le n-2$.

\medskip

If $\bC=\bY^1\times\R^{n-1}\in{\rm Tan}(M,x_0)$, then
\[
V_0=\var(\bC,1)\in{\rm Tan}(V,x_0)
\]
where $V=\var(M,1)$ is an integral stationary $n$-varifold in $U$. By Simon's $Y$-regularity theorem \cite{Simon_cylindrical}, see e.g. \cite[Theorem 4.6]{ColomboEdelenSpolaor} for a handy statement, $M$ can be locally parameterized over $\bY^1\times\R^{n-1}$ near $x_0$, in the sense that there exist $r>0$, an open neighborhood $A$ of $x_0$, and a homeomorphism $\Phi:(\bY^1\times\R^{n-1})\cap B_r\to M\cap A$ with $\Phi(0)=x_0$ and mapping the spine of $\bY^1\times\R^{n-1}$ into $\Sigma\cap A$, such that, denoting by $\{H_i\}_{i=1}^3$ the three $n$-dimensional half-planes whose union gives $\bY^1\times\R^{n-1}$, the restriction of $\Phi$ to $H_i\cap B_r$ is a $C^{1,\a}$-diffeomorphism between hypersurfaces with boundary. An application of Whitney's extension theorem (which is usually mentioned without details in the literature, see e.g. the comments in \cite[Pag. 528]{taylor76} and \cite[Pag. 650]{Simon_cylindrical}; we notice that a simplification of the proof of \cite[Theorem 3.1]{CiLeMaIC1} gives the desired result) allows one to extend $\Phi$ into an ambient parametrization of $M$ over $\bY^1\times\R^{n-1}$ in a neighborhood of $x_0$. However, Theorem \ref{prop:noYx}, excludes the existence of such ambient parametrization. Therefore we conclude that $\bY^1\times\R^{n-1}$ cannot belong modulo isometries to ${\rm Tan}(M,x_0)$ for any $x_0\in\Sigma$. As a consequence, $\dim S(\bC)\le n-2$ for every $\bC\in{\rm Tan}(M,x_0)$, and thus $\Sigma=\Sc^{n-2}(M)$. By Federer's dimensional reduction argument \eqref{dimension reduction}, we conclude in particular that
\[
\H^{n-1}(\Sigma)=0\,.
\]
In summary, $V=\var(M,1)$ is a stationary integral $n$-varifold in $U$, whose regular part is stable thanks to \eqref{minimality KE against diffeos}, and whose singular part is $\H^{n-1}$-negligible. The regularity theory of Schoen, Simon and Wickramasekera \cite{SchoenSimon81,Wic} allows us to conclude then that $\Sigma$ is empty if $1\le n\le 6$, is locally finite in $U$ if $n=7$, and coincides with $\cS^{n-7}(V)$ if $n \ge 8$. In particular, if $n \ge 8$ then $\Sigma$ is countably $(n-7)$-rectifiable in $U$ by Theorem \ref{thm nv}. This completes the proof of the theorem.
\end{proof}

We close with a few technical comments on how the regularity theory for varifolds and Almgren minimal sets has been applied in the above argument.

\begin{remark}\label{rmk reg if n12}
  {\rm In the physical cases $n=1$ and $n=2$, which are clearly the most important ones for the soap film capillarity model, one does not need to use the full power of the regularity theory contained in \cite{Simon_cylindrical,SchoenSimon81,Wic}. Indeed, once $M=K\setminus\cl(E)$ has been shown to be an Almgren minimal set in $\Om\setminus\cl(E)$, Taylor's theorem (i.e., Theorem \ref{thm main taylor} above) shows that if $\Sigma$ is non-empty, then $M$ admits an ambient parametrization over $\bY^1\times\R^{n-1}$ at some of its singular points, thus triggering a contradiction with Theorem \ref{prop:noYx}.}
\end{remark}

\begin{remark}\label{rmk no wic}
  {\rm The following argument allows to use \cite{SchoenSimon81} in place of \cite{Wic} (notice that \cite{Wic} relies on \cite{SchoenSimon81}). Going back to the application of Corollary \ref{corollary taylor 3} to $M=K\setminus\cl(E)$, and after having excluded the existence of $Y$ points thanks to \cite{Simon_cylindrical} and Theorem \ref{prop:noYx}, we are in the position to say that if $\bC\in{\rm Tan}(M,x_0)$, then either $\bC=\bT^2\times\R^{n-2}$ modulo isometries or $\dim\,S(\bC)\le n-3$. In the former case, a direct parametrization argument away from the spine of $\bC$ (in the spirit of \cite[Lemma 4.8]{ColomboEdelenSpolaor}) implies the existence of $Y$-points near $x_0$, and a contradiction with Theorem \ref{prop:noYx}. We thus conclude that $\dim\,S(\bC)\le n-3$ for every $\bC\in{\rm Tan}(M,x_0)$, $x_0\in\Sigma$, and thus that $\H^{n-2}(\Sigma)=0$. By \cite{SchoenSimon81}, an integral stationary $n$-varifold $V$ in $\R^{n+1}$ whose regular part is stable and whose singular set is $\H^{n-2}$-negligible is such that the singular set is empty if $1\le n\le 6$, and coincides with $\cS^{n-7}(V)$ if $n \ge 7$ (and thus it is countably $(n-7)$-recitifiable by Naber-Valtorta).}
\end{remark}

\section{Local finiteness of the Hausdorff measure of the singular set}\label{appendix NV local} In this section we sketch the arguments needed to improve the countable $(n-7)$-rectifiability of $\Sigma$, proved in Theorem \ref{t:main}, into
local finiteness of the $(n-7)$-dimensional Minkowski content, and thus, in particular, into local $\H^{n-7}$-rectifiability; see Remark \ref{remark locally finite}. Towards this goal, we will need to introduce the following notion of {\bf quantitative stratification} of the singular set of a stationary integral varifold.

\medskip

Let $\dist_\var$ be a distance function of the space of $n$-dimensional varifolds in $B_1\subset \R^{n+1}$ which induces the varifold convergence. Let $V$ be a stationary integral $n$-varifold in a ball $B_r(x)\subset \R^{n+1}$ with $x \in \spt(V)$. For any $\delta >0$, we say that $V$ is {\bf $(k,\delta)$-almost symmetric in $B_r(x)$} if there exists a $k$-symmetric integral $n$-cone $V_0$ (i.e. $\dim\,S(V_0)\ge k$) such that
\[
\dist_\var((\iota_{x,r})_\sharp V \mres B_1, V_0\mres B_1) < \delta\,.
\]
For $k \in \{0,\ldots,n\}$ and $\delta>0$, we define the $(k,\de)$-{\bf quantitative stratum} $\cS^k_\de(V)$ by
\begin{equation*}
\begin{split}
\cS^{k}_{\delta}(V) = \Big\{ x \in \spt(V)\, \colon \, & \mbox{$V$ is \emph{not} $(k+1,\delta)$-almost symmetric in $B_r(x)$} \\ & \mbox{for all $r>0$ such that $V$ is stationary in $B_r(x)$} \Big\}\,.
\end{split}
\end{equation*}
We can now recall the following theorem from \cite{NV_varifolds}:

\begin{theorem}[{See \cite[Theorem 1.4]{NV_varifolds}}] \label{thm:nv}
Let $\delta, \Lambda >0$. There exists $C_\delta=C(n,\Lambda,\delta)>0$ such that if $V$ is an integral stationary $n$-varifold in $B_2\subset\R^{n+1}$ with $\|V\|(B_2)\leq \Lambda$ then
\begin{equation} \label{e:minkowski estimate nv}
\Big|  I_r(\cS^k_\delta(V)) \cap B_1  \Big| \leq C_\delta \, r^{n+1-k} \qquad \mbox{for all $0 < r < 1$}\,.
\end{equation}
In particular, $\H^{k}(\cS^k_\delta(V) \cap B_1) \leq C_\delta$. Furthermore, $\cS^k_\de(V)$ is countably $k$-rectifiable.
\end{theorem}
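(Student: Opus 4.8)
The statement is, essentially verbatim, \cite[Theorem 1.4]{NV_varifolds}, so the plan is to reduce to it and then recall why the cited proof delivers exactly the three assertions above. First I would check that the hypotheses and objects match: $V$ stationary and integral in $B_2$ with a mass bound $\|V\|(B_2)\le\Lambda$ are precisely the standing assumptions in \cite{NV_varifolds}, and the quantitative stratum $\cS^k_\de(V)$ defined here through the failure of $(k+1,\de)$-almost symmetry at \emph{all} admissible scales coincides with the one used there, once we fix a metric $\dist_\var$ metrizing varifold convergence on $B_1$ (the particular choice of metric is irrelevant, since replacing it only rescales $\de$). With these identifications \eqref{e:minkowski estimate nv} is the Minkowski-content estimate of \cite[Theorem 1.4]{NV_varifolds}; the bound $\H^k(\cS^k_\de(V)\cap B_1)\le C_\de$ is then immediate by letting $r\to0^+$ and comparing with the definition of Hausdorff measure, and the countable $k$-rectifiability follows from the rectifiable-Reifenberg theorem, which is part of the same package in \cite{NV_varifolds}.

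For completeness I would then recall the architecture of the argument in \cite{NV_varifolds}, which proceeds in three blocks. \emph{(i) Quantitative rigidity and cone-splitting}: using the monotonicity of $r\mapsto\Theta_V(x,r)=\|V\|(B_r(x))/(\omega_n r^n)$, a compactness-and-contradiction argument shows that if $V$ is simultaneously $(0,\e)$-almost symmetric about $k+1$ points whose affine span is $(k+1)$-dimensional and effectively spanning at scale $r$, then $V$ is $(k+1,\de(\e))$-almost symmetric in $B_r$, with $\de(\e)\to0$ as $\e\to0$; hence points of $\cS^k_\de(V)$ cannot cluster near any $(k+1)$-plane at a definite scale. \emph{(ii) The $L^2$ best-approximation theorem}: for a suitable packing measure $\mu$ (a sum of weighted Dirac masses) supported near $\cS^k_\de$, the quantity $\int_{B_r(x)}\dist(y,L_{x,r})^2\,d\mu(y)$, where $L_{x,r}$ is the best-approximating $k$-plane, is controlled by $r^{n+k+2}$ times the density drop $\Theta_V(x,2r)-\Theta_V(x,r)$; this is the analytic core, obtained from an almost-monotonicity and second-variation computation adapted to stationary integral varifolds. \emph{(iii) The Reifenberg-type covering}: feeding the $L^2$ estimate into the discrete Reifenberg/traveling-salesman scheme of \cite{NV_varifolds}, one constructs at every dyadic scale $r$ a covering of $\cS^k_\de(V)\cap B_1$ by at most $C_\de\,r^{-k}$ balls of radius $r$, which gives \eqref{e:minkowski estimate nv}; the same construction shows that $\cS^k_\de(V)$ is Reifenberg-flat at a uniformly summable collection of scales, whence its countable $k$-rectifiability.

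The hard part, were one to carry this out from scratch rather than quote \cite{NV_varifolds}, is block (ii): the $L^2$ best-approximation estimate requires a sharp form of the monotonicity formula together with control of the angular energy $\int |x^\perp|^2|x|^{-n-2}\,d\|V\|$ (whose integrand measures the failure of conicality), and then the propagation of this control across scales while tracking the packing measure — considerably more delicate for general stationary integral varifolds than in the minimal-current or harmonic-map settings. Block (i) is a soft compactness argument, and block (iii), though intricate, is by now a standard black box. For the application in this paper one only needs the case $k=n-7$, applied to $V=\var(M,1)$ with $M=K\setminus\cl(E)$, where the mass bound $\|V\|(B_2)\le\Lambda$ is supplied locally by the monotonicity formula and the upper density bounds already established.
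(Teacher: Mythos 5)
The paper does not prove this statement: it is quoted verbatim as an external black-box result from Naber--Valtorta, exactly as you do in your first paragraph, and your identification of the hypotheses, the passage from the Minkowski bound to the $\H^k$ bound, and the source of the rectifiability are all consistent with that citation. Your additional sketch of the Naber--Valtorta architecture is accurate but goes beyond what the paper itself supplies.
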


\begin{remark}
The countable $k$-rectifiability of $\cS^k(V)$ claimed in Theorem \ref{thm nv} is in fact a corollary of the countable $k$-rectifiability of the quantitative strata $\cS^k_\de(V)$ together with the fact that
\begin{equation} \label{strata back together}
\cS^k (V)= \bigcup_{\delta >0} \cS^k_\delta(V)\,.
\end{equation}
\end{remark}

We are now in the position to show that, under the assumptions of Theorem \ref{t:main}, if $n\ge 7$, then $\Sigma$ has locally finite $(n-7)$-dimensional Minkowski content, and thus it is locally $\H^{n-7}$-finite. Since we can cover any open set compactly contained in $\Omega \setminus \cl (E)$ by a finite number of balls $B_{3r_*}(x_i)$ such that $B_{r_*}(x_i)$ are pairwise disjoint and $B_{9r_*}(x_i) \subset \Om \setminus \cl (E)$, we can directly focus on obtaining an upper bound on the $(n-7)$-dimensional Minkowski content of $\Sigma$ in $B$ whenever $B$ is an open ball with $3B \subset \Om \setminus \cl (E)$, where $3B$ denotes the concentric ball to $B$ with three times the radius. To this end we claim that
\[
\mbox{$\exists \delta > 0$ such that $\Sigma \cap 2B \subset \cS^{n-7}_\delta(V) \cap 2B$\,.}
\]
Indeed, thanks to Theorem \ref{thm:nv} this claim implies
\[
\Big| I_r(\Sigma) \cap B \Big| \leq C_\de\,r^{8} \qquad \mbox{for all $0<r<{\rm radius}(B)$}\,,
\]
and thus $\H^{n-7}(\Sigma\cap B)\le C_\delta$ for a constant $C_\delta=C(n,\H^n(K \cap 2B),\de)$, from which the local $\H^{n-7}$-finiteness of $\Sigma$ follows. To prove the claim we argue by contradiction and assume the existence of a sequence $\delta_h \to 0^+$ and points $x_h \in \Sigma \cap 2B$ such that $x_h \notin \cS^{n-7}_{\delta_h}(V)$. Assuming that ${\rm radius}(B)=1$ for simplicity, so that $V$ is stationary in $B_1(x_h)$ for every $h$, the definition of quantitative strata then yields a sequence $r_h$ of scales $0 < r_h < 1$ such that $V$ is $(n-6,\delta_h)$-almost symmetric in $B_{r_h}(x_h)$: in other words, there are integral $n$-cones $W_h$ with $\dim S(W_h) \ge n-6$ such that, setting $K_h = (K-x_h)/r_h$ and $V_h = \var(K_h,1)$, we have $\dist_{\var}(V_h \mres B_1, W_h \mres B_1) \leq \delta_h$. Since the weights $\|V_h\|(B_1)$ are uniformly bounded as a consequence of the monotonicity formula, each $V_h$ is stationary in $B_1$, and $\delta_h \to 0^+$, a (not relabeled) subsequence of the varifolds $V_h \mres B_1$ converges, as $h \to \infty$ and in the sense of varifolds, to a stationary integral $n$-varifold which is the restriction to $B_1$ of an Almgren minimal cone $\bC$ in $\R^{n+1}$ with $\dim S(\bC) \ge n-6$. By Remark \ref{remark smooth}, $\bC$ cannot be a plane, as otherwise $K$ would be smooth in a neighborhood of $x_h$ for all sufficiently large $h$, a contradiction to $x_h \in \Sigma$. In particular, $\bC$ is singular at the origin, and since $\dim S(\bC) \ge n-6$ it must be $\H^{n-6}(\Sing(\bC))=\infty$, if $\Sing(\bC)$ denotes the set of singular points of $\bC$. We claim that
\begin{equation} \label{a priori zero}
\H^{n-1}(\Sing(\bC))=0\,.
\end{equation}
If this is true, then we can apply again \cite{Wic} and conclude that $\dim_{\H}(\Sing(\bC))\le n-7$, a contradiction. We prove \eqref{a priori zero} by showing that $\bC$ cannot have points of type $Y$. Otherwise, there would be a point $y \in \bC \cap B_1$ such that, modulo rotations, the (unique) tangent cone $\bC_y$ to $\bC$ at $y$ is $\bY^1 \times \R^{n-1}$. Since varifold convergence of stationary integral varifolds implies Hausdorff convergence of their supports, for every $\delta > 0$ there exists $\sigma \in \left( 0, \dist(y, \pa B_1) \right)$ such that, for all sufficiently large $h$,
\[
\hd(\spt((\iota_{y,\sigma})_\sharp V_h) \cap B_{1}   , (\bY^1 \times \R^{n-1}) \cap B_1  ) \le \delta
\]
where $\hd$ denotes the Hausdorff distance. By Simon's $Y$-regularity theorem, $K_h$ admits an ambient parametrization of class $C^{1,\alpha}$ over $\bY^1 \times \R^{n-1}$ in $B_{\sigma/2}(y)$, and thus, in turn, there is a point in $K$ at which $K$ admits an ambient parametrization of class $C^{1,\alpha}$ over $\bY^1\times\R^{n-1}$, a contradiction to Theorem \ref{prop:noYx}. \qed

\bibliographystyle{is-alpha}
\bibliography{references_mod}
\end{document}